\sloppy\pagestyle{plain}%
\newtheorem{theorem}[equation]{Theorem}
\newtheorem*{maintheorem*}{Main Theorem}
\newtheorem{lemma}[equation]{Lemma}
\newtheorem{proposition}[equation]{Proposition}
\newtheorem{corollary}[equation]{Corollary}
\newtheorem{conjecture}[equation]{Conjecture}
\newtheorem*{question*}{Question}
\newtheorem*{problem*}{Problem}
\theoremstyle{definition}
\newtheorem{definition}[equation]{Definition}
\newtheorem{remark}[equation]{Remark}
\theoremstyle{remark}
\makeatletter\@addtoreset{equation}{section} \makeatother
\definecolor{sqsqsq}{rgb}{0.12549019607843137,0.12549019607843137,0.12549019607843137}
\definecolor{cqcqcq}{rgb}{0.7529411764705882,0.7529411764705882,0.7529411764705882}
\newcommand{\mult}{\operatorname{mult}}
\title{K-stability of smooth del Pezzo surfaces}
\author{Jihun Park and Joonyeong Won}
\address{ \emph{Jihun Park}\newline \textnormal{Center for Geometry and Physics, Institute for Basic Science (IBS)
\newline \medskip 77 Cheongam-ro, Nam-gu, Pohang, Gyeongbuk, 37673, Korea. \newline Department of Mathematics, POSTECH \newline 77 Cheongam-ro, Nam-gu, Pohang, Gyeongbuk, 37673, Korea. \newline \texttt{wlog@postech.ac.kr}}}%
\address{\emph{Joonyeong
Won}\newline \textnormal{Center for Geometry and Physics, Institute for Basic Science (IBS)
\newline \medskip 77 Cheongam-ro, Nam-gu, Pohang, Gyeongbuk, 37673, Korea. \newline \texttt{leonwon@kias.re.kr}}}%
\thanks{This work has been supported by IBS-R003-D1, Institute for Basic Science in Korea. 
The authors are grateful to Kento Fujita and Yuji Odaka  who brought the article \cite{FO16} to their attention.  
They also thank Giulio Codogni who informed them of the article \cite{K78}. }
\begin{document}

\begin{abstract}
In a new algebro-geometric way we completely determine whether  smooth del Pezzo surfaces are K-(semi)stable or not.
\end{abstract}

\maketitle

In the present article, all varieties are defined over an algebraically closed field $k$  of characteristic $0$.
\section{Introduction}
\label{sec:intro}
Since entering the 21st century we have   witnessed dramatic developments in the study of the Yau-Tian-Donaldson conjecture concerning the existence of K\"ahler-Einstein metrics on Fano manifolds and stability. The challenge to the  conjecture has been highlighted  by Chen, Donaldson, Sun and Tian who have completed the proof of the following celebrated statement (\cite{CDS15a}, \cite{CDS15b}, \cite{CDS15c}, \cite{T15}).

\begin{theorem}\label{theorem:CDST}
Let $X$ be a smooth Fano variety defined over $\mathbb{C}$. 
It admits  a K\"ahler-Einstein metric if and only if the pair  $(X, -K_X)$ is K-polystable.
\end{theorem}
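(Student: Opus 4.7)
The plan is to treat the two directions separately, as they rely on very different techniques. For the easy direction, assuming a Kähler--Einstein metric $\omega_{\mathrm{KE}}$ on $X$, I would consider an arbitrary test configuration $(\mathcal{X},\mathcal{L})$ for $(X,-K_X)$ and express its Donaldson--Futaki invariant as the asymptotic slope of the Mabuchi K-energy along the one-parameter family of potentials induced by the test configuration. The key analytic input is convexity of the K-energy along weak geodesics in the space of Kähler metrics (Berndtsson, Berman--Berndtsson). Since $\omega_{\mathrm{KE}}$ is a critical point, the K-energy is bounded below on its orbit, forcing $\mathrm{DF}(\mathcal{X},\mathcal{L})\geq 0$, with equality only for product configurations. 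This gives K-polystability.

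For the hard direction (K-polystability $\Rightarrow$ KE), I would run Aubin's continuity method along the path
\[
\mathrm{Ric}(\omega_t) = t\,\omega_t + (1-t)\,\alpha, \qquad t\in[0,1],
\]
for a fixed reference $\alpha\in c_1(X)$. Openness of the solvability set $T\subset[0,1]$ is standard via the implicit function theorem, using a Bochner/Weitzenböck argument to invert the linearization. Closedness is the central issue: one needs to exclude that $T=[0,\tau)$ for some $\tau\leq 1$, which reduces to a uniform $C^0$ estimate on the potentials $\varphi_{t}$.

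If closedness fails at some $\tau\leq 1$, the approach is to extract a Gromov--Hausdorff limit $X_\infty$ of $(X,\omega_{t_i})$ as $t_i\nearrow\tau$. The work here rests on the Cheeger--Colding--Tian structure theory (noncollapsing, tangent cones, codimension-four bound on the singular set) and on Donaldson--Sun's partial $C^0$ estimate, which produces uniform Kodaira-type embeddings via pluri-anticanonical sections. These embeddings pass to the limit, showing $X_\infty$ is a normal $\mathbb{Q}$-Fano variety with klt singularities, and the convergence $X\rightsquigarrow X_\infty$ is in fact algebraic: it is realized by a test configuration $\mathcal{X}\to\mathbb{A}^1$ with generic fiber $X$ and central fiber $X_\infty$, constructed by tracking Hilbert points of the embeddings.

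The final step is to compute $\mathrm{DF}(\mathcal{X},-K_{\mathcal{X}/\mathbb{A}^1})$. Following Berman's slope-type formula (later refined by Li, Tian, and Li--Xu), the breakdown of the continuity method forces $\mathrm{DF}(\mathcal{X},-K_{\mathcal{X}/\mathbb{A}^1})\leq 0$, with equality precisely when the degeneration is trivial and $X_\infty\cong X$. K-polystability of $(X,-K_X)$ then forces equality, so the continuity method extends past $\tau$, and iteration completes the path to $t=1$, yielding the desired Kähler--Einstein metric. The main obstacle is unambiguously the construction and analysis of the Gromov--Hausdorff limit: establishing the partial $C^0$ estimate, the normality and klt nature of $X_\infty$, and the algebraicity of the degeneration together constitute the deepest part of the proof, as they bridge Riemannian metric geometry with projective algebraic geometry. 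The subsequent K-stability computation is then the comparatively formal algebraic translation of this analytic input.
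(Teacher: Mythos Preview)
The paper does not prove this theorem at all: it is quoted as a known background result, with the proof attributed to the references \cite{CDS15a}, \cite{CDS15b}, \cite{CDS15c}, \cite{T15}. There is nothing in the paper to compare your proposal against beyond those citations.

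As for the content of your sketch, it is broadly in the spirit of the published proofs, but note one substantive inaccuracy in the continuity path. Chen--Donaldson--Sun do not run Aubin's path $\mathrm{Ric}(\omega_t)=t\omega_t+(1-t)\alpha$; they instead deform through K\"ahler--Einstein metrics with cone singularities of cone angle $2\pi\beta$ along a fixed smooth anticanonical divisor, letting $\beta\nearrow 1$. The structure of the argument you outline (partial $C^0$ estimate, Gromov--Hausdorff limits via Cheeger--Colding--Tian and Donaldson--Sun, algebraicity of the limit as a $\mathbb{Q}$-Fano test configuration, and the Donaldson--Futaki sign forcing a contradiction with K-polystability) is correct in spirit, but the technical heart of \cite{CDS15a,CDS15b,CDS15c} is precisely adapting all of this to the conical setting, which introduces difficulties your sketch does not acknowledge. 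Tian's argument in \cite{T15} is closer to a smooth continuity method but also differs in detail from what you wrote. If you intend this as more than a pointer to the literature, you should at minimum align the continuity path with one of the actual published proofs.
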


The motivation of this article cannot  be expressed  in a better way than quoting the following phrase in one of the three articles by   Chen, Donaldson and Sun (\cite{CDS15c}):
\begin{quote}
``On the other hand, we should point out that as things stand at present the result is of very limited use in concrete cases, so that there is no manifold $X$ known to us, not covered by other existence results and where we can deduce that $X$ has a K\"ahler-Einstein metric. This is because it seems a very difficult matter to test K-stability by a direct study of all possible degenerations. However, we are optimistic that this situation will change in the future, with a deeper analysis of the stability condition.''
\end{quote}

There are not so many results concerning K-stability of specified smooth Fano varieties, not deduced by K\"ahler-Einstein metrics. It seems almost infeasible to consider all possible degenerations of a given Fano manifold. Even for del Pezzo surfaces, we do not have complete description of their degenerations. There are a classification only for the projective plane and some partial classifications for the others (\cite{Ma91}, \cite{Ma93}, \cite{HPr10}, \cite{Pr15}). We also have a  classification of del Pezzo surfaces with 
quotient singularities and Picard
rank one that admit $\mathbb{Q}$-Gorenstein smoothings (\cite{HPr10}). These are however not enough to directly test K-stability of del Pezzo surfaces. 

Meanwhile, since asymptotic Chow-stability implies K-semistability (for instance, see \cite{RTh07}), an algebro-geometric proof for the K-semistability of the projective spaces  can be yielded by the celebrated result of Kempf in \cite{K78} that a homogeneous rational variety embedded with a complete linear system is Chow-stable.

There are a few of algebro-geometric  methods known to us that can be utilized to prove K-stability in concrete cases. One of the ways  is based on the $\alpha$-invariant originally introduced by Tian (\cite{T87}). 
The original definition of the $\alpha$-invariant was given  in an analytic way.  However, there is an algebro-geometric   way to define the $\alpha$-invariant over an arbitrary field of characteristic zero.
 \begin{definition}
Let $X$ be a Fano orbifold defined over $k$. The global log canonical threshold of $X$
is defined by the number
\[\alpha(X)=\sup\left\{\lambda\in\mathbb{Q}\ \left|%
\aligned
&\text{the  pair}\ \left(X, \lambda D\right)\ \text{is log canonical for every effective} \\
&\text{ $\mathbb{Q}$-divisor $D$ numerically equivalent to $-K_X$.}\\
\endaligned\right.\right\}.%
\]
\end{definition}
It  has been verified that for a Fano orbifold  defined over $\mathbb{C}$ its $\alpha$-invariant coincides with the global log canonical threshold (\cite{ChS08}).
For this reason, the same name $\alpha$-invariant and  the same notation $\alpha(X)$  will be used for the global log canonical threshold of the Fano orbifold defined over $k$ in the present article.

The original purpose of the $\alpha$-invariant is to show the existence of K\"ahler-Einstein metrics on given Fano manifolds (\cite{T87}).
Fujita, Odaka and Sano however reinterpret the $\alpha$-invariant as a sufficient condition for a $\mathbb{Q}$-Fano variety to be K-stable. 

 \begin{theorem}[\cite{OS12}, \cite{F16}]\label{theorem:FOS}
Let $X$ be a $\mathbb{Q}$-Fano variety. Either if 
\begin{equation}\label{inequality}
 \alpha(X) >\frac{\dim (X)}{\dim(X)+1} 
\end{equation}
or if  $\alpha(X) =\frac{\dim (X)}{\dim(X)+1}$ and $X$ is smooth,
then the pair $(X,-K_X)$ is K-stable.
\end{theorem}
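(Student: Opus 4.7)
The plan is to verify that the Donaldson--Futaki invariant $\mathrm{DF}(\mathcal{X},\mathcal{L})$ of every nontrivial normal test configuration $(\mathcal{X},\mathcal{L})$ of $(X,-K_X)$ is strictly positive. First I would compactify $\mathcal{X}$ over $\mathbb{P}^1$ to $\overline{\mathcal{X}}$ and pass to the normalization, operations that may only decrease $\mathrm{DF}$. Odaka's intersection-theoretic formula then expresses $\mathrm{DF}(\mathcal{X},\mathcal{L})$ as a positive multiple of
\[
n\bigl(\overline{\mathcal{L}}^{n+1}\bigr)+(n+1)\bigl(K_{\overline{\mathcal{X}}/\mathbb{P}^1}\cdot\overline{\mathcal{L}}^n\bigr),
\]
where $n=\dim X$. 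Writing $\overline{\mathcal{L}}\qsim -K_{\overline{\mathcal{X}}/\mathbb{P}^1}+E$ with $E$ a vertical $\mathbb{Q}$-divisor supported on the central fibre $\mathcal{X}_0$ and expanding, one decomposes this quantity into a log-discrepancy contribution of the pair $(\overline{\mathcal{X}},\mathcal{X}_{0,\mathrm{red}})$ plus correction terms in $E$ that are non-negative after a suitable vertical twist.

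Next I would bring in the $\alpha$-invariant. For a sufficiently divisible $m$, choose a general member $\mathcal{D}\in|{-m}K_{\overline{\mathcal{X}}/\mathbb{P}^1}|$, so that $\mathcal{D}|_X$ is a general element of $|{-m}K_X|$. By hypothesis $(X,\tfrac{\lambda}{m}\mathcal{D}|_X)$ is log canonical for every $\lambda<\alpha(X)$; together with lower semicontinuity of log canonical thresholds in the flat family $\overline{\mathcal{X}}\to\mathbb{P}^1$ and inversion of adjunction along $\mathcal{X}_0$, this propagates to a lower bound on $\mathrm{lct}\bigl(\overline{\mathcal{X}},\mathcal{X}_{0,\mathrm{red}};\tfrac{1}{m}\mathcal{D}\bigr)$. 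Feeding this bound into the rewritten Odaka formula produces an estimate of the schematic form
\[
\mathrm{DF}(\mathcal{X},\mathcal{L})\;\geq\;\Bigl(\alpha(X)-\tfrac{n}{n+1}\Bigr)\cdot(\text{strictly positive intersection number})+(\text{non-negative correction}),
\]
which is strictly positive as soon as $\alpha(X)>\tfrac{n}{n+1}$, handling the first case.

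The main obstacle is the borderline case $\alpha(X)=\tfrac{n}{n+1}$ with $X$ smooth, where the right-hand side collapses to $\geq 0$ and one must upgrade semistability to stability by showing that equality forces $(\mathcal{X},\mathcal{L})$ to be the product test configuration. The plan is to trace the tightness of every inequality used above: saturation of the $\alpha$-estimate forces the log canonical place attaining $\alpha(X)=\tfrac{n}{n+1}$ to be supported on a component of $\mathcal{X}_0$, while saturation of Odaka's estimate forces $\mathcal{X}_0$ to be reduced and irreducible and the pair $(\overline{\mathcal{X}},\mathcal{X}_0)$ to be purely log terminal along it. Smoothness of $X$ is then essential to conclude: any $\mathbb{Q}$-Gorenstein one-parameter degeneration having such an extremal central fibre must, by the rigidity of smooth Fano manifolds under mild $\mathbb{Q}$-Gorenstein degenerations, be isotrivial, so the test configuration is the product and every nontrivial one satisfies $\mathrm{DF}>0$. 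In the orbifold setting this rigidity can fail, which is precisely why the equality case of the theorem must be restricted to smooth $X$.
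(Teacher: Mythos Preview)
The paper does not give its own proof of this theorem; it is quoted from \cite{OS12} (the strict inequality) and \cite{F16} (the borderline case with $X$ smooth), so there is no in-paper argument to compare against. Your sketch for the strict case follows the Odaka--Sano line --- Odaka's intersection formula for $\mathrm{DF}$, a lower bound on the log canonical threshold along the central fibre coming from the $\alpha$-hypothesis via inversion of adjunction, and the resulting positivity --- and is fine as an outline, though the ``non-negative correction'' step hides the choice of twist that makes Odaka's argument work and deserves at least a pointer.

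The borderline case, however, has a real gap. Your endgame claims that once all inequalities saturate, the central fibre is reduced and irreducible with $(\overline{\mathcal{X}},\mathcal{X}_0)$ plt, and then appeals to ``rigidity of smooth Fano manifolds under mild $\mathbb{Q}$-Gorenstein degenerations'' to conclude isotriviality. No such rigidity principle holds: smooth Fano manifolds have many nontrivial $\mathbb{Q}$-Gorenstein degenerations with normal, even klt, central fibre, and plt-ness of the pair does not by itself force a product test configuration. Fujita's argument in \cite{F16} proceeds quite differently, through the valuative reformulation of K-stability: one shows that for every prime divisor $E$ over $X$ the invariant $\beta(E)$ is strictly positive, by analysing what $\beta(E)\le 0$ together with $\alpha(X)=\tfrac{n}{n+1}$ would force on the volume function $t\mapsto\mathrm{vol}(-K_X-tE)$; the resulting equalities pin down a very constrained geometry that smoothness of $X$ then excludes. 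Your outline does not reach this mechanism, and the ``rigidity'' step you invoke would itself require a proof at least as hard as the theorem.
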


This enables us to have a  detour studying   K-stability for some specific $\mathbb{Q}$-Fano varieties.
Even though the method is only a one-side implication and the $\alpha$-invariants are not easy to compute at all, 
this is the only pragmatic way, known to us so far,  to verify K-stability in concrete cases.  It is an indisputable expectation that computing the $\alpha$-invariant of a given Fano variety should be much more doable than directly investigating its degenerations.

For instance, Cheltsov has computed the exact values of the $\alpha$-invariants of all the smooth del Pezzo surfaces. For the purpose of K-stability, we summarize  his computation  in the following way.

\begin{theorem}[\cite{Ch08}] Let $S$ be a smooth del Pezzo surface of degree $d$.
\begin{itemize}
\item If $d\geq 5$, then $\alpha(S)<\frac{2}{3}$.
\item If $d\leq 4$, then $\alpha(S)\geq\frac{2}{3}$.
\end{itemize}
\end{theorem}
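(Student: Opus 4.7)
The plan is to handle the two bullets separately, with case analysis by degree.

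For the first bullet ($d\geq 5$), the approach is explicit construction. For each smooth del Pezzo surface $S$ of degree $d\geq 5$ I would exhibit an effective $\mathbb{Q}$-divisor $D\qsim -K_S$ whose log canonical threshold is strictly smaller than $2/3$, forcing $\alpha(S)<2/3$. For $\mathbb{P}^2$ one takes a single line with coefficient $3$, which gives log canonical threshold $1/3$; for $\mathbb{P}^1\times\mathbb{P}^1$ the divisor $2F_1+2F_2$, where $F_1,F_2$ are the two rulings through a common point, gives threshold $1/2$; and for the remaining types ($\mathbb{F}_1$ and $S_d$ with $d\in\{5,6,7\}$) one uses a $\mathbb{Q}$-positive combination of $(-1)$-curves passing through a common point, scaled to lie in the class of $-K_S$ and chosen so that either some component has coefficient larger than $3/2$ or the multiplicity at the common point exceeds $3/2$.

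For the second bullet ($d\leq 4$), I would argue by contradiction. Suppose $D\qsim -K_S$ is effective and $(S,\tfrac{2}{3}D)$ is not log canonical at some $p\in S$. First reduce to the case in which no irreducible component of $D$ through $p$ has coefficient exceeding $3/2$; any such component $C$ can be isolated and analyzed separately, since $D\qsim -K_S$ together with $D\cdot C'\geq 0$ for curves $C'$ forces strong intersection-theoretic bounds on its coefficient in terms of the Mori-cone geometry of $S$. In the remaining case, the standard bound on log canonical thresholds at smooth surface points gives $\mult_p D>3/2$. One then selects an irreducible curve $C$ through $p$ with $C\not\subset\mathrm{Supp}(D)$ and small anticanonical degree---a $(-1)$-curve, a smooth conic, or a suitable member of $|-K_S|$ through $p$---and derives the contradiction
\[
(-K_S)\cdot C \;=\; D\cdot C \;\geq\; \mult_p(D)\cdot\mult_p(C)\;>\;\tfrac{3}{2}\,\mult_p(C).
\]
For $d\leq 4$, the abundance of low-degree curves on $S$ should allow one to choose $C$ so that $(-K_S)\cdot C$ is too small to accommodate the last inequality.

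The principal obstacle lies in executing this scheme for $d=1$. There $(-K_S)^2=1$, so every $(-1)$-curve and every fiber of the elliptic pencil $|-K_S|$ satisfies $(-K_S)\cdot C=1$, and the contradiction is immediate whenever such a $C$ through $p$ lies outside $\mathrm{Supp}(D)$; however, ruling out the scenario where every low-degree curve through $p$ is a component of $D$ is delicate. This will require a joint analysis of the $240$ $(-1)$-curves and the anticanonical elliptic pencil, exploiting the unique base point of the pencil and, in the case $p$ coincides with that point, a finer study of tangent directions of members of $|-K_S|$. A similar but milder difficulty arises for $d=2$, and for $d=3,4$ one must also treat Eckardt-like points where several $(-1)$-curves meet.
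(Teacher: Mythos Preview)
The paper does not prove this theorem; it is quoted from \cite{Ch08} and used as background to motivate the $\delta$-invariant approach. So there is no ``paper's own proof'' to compare against, and your proposal should be measured against the actual argument in \cite{Ch08}.

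Your outline for $d\geq 5$ is fine: explicit divisors do the job. For $d\leq 4$, however, the scheme you describe is too crude to close even the cases you regard as easy. The implication ``$(S,\tfrac{2}{3}D)$ not log canonical at $p$ $\Rightarrow$ $\mult_p D>\tfrac{3}{2}$'' is correct, but the intersection-theoretic contradiction $( -K_S)\cdot C=D\cdot C>\tfrac{3}{2}$ requires a curve $C\not\subset\mathrm{Supp}(D)$ through $p$ with $(-K_S)\cdot C\leq\tfrac{3}{2}$, i.e.\ a $(-1)$-curve. On $S_3$ and $S_4$ a \emph{general} point lies on no $(-1)$-curve; the cheapest curves through such a point are conics with $(-K_S)\cdot C=2$, and $2>\tfrac{3}{2}$ is no contradiction. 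So the bare multiplicity bound already fails at generic points in degrees $3$ and $4$, not only at Eckardt-type points.

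What Cheltsov actually does in \cite{Ch08} goes well beyond this: one writes $D=aC+\Omega$ for a carefully chosen curve $C$ through $p$, blows up $p$ (sometimes repeatedly), and applies inversion-of-adjunction/Corti-type inequalities to bound the local intersection $(\Omega\cdot C)_p$ from below, then plays this against global intersection numbers. The hard work is a lengthy case analysis depending on how $p$ sits relative to the $(-1)$-curves and (for $d\leq 2$) the anticanonical pencil or double-cover branch locus. Your acknowledged obstacles for $d=1,2$ are genuine, and resolving them is the bulk of \cite{Ch08}; the proposal as written is a plan for the opening move, not for the proof.
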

With this estimation, the method of Fujita-Odaka-Sano immediately implies that a smooth del Pezzo surface of degree at most $4$
 is K-stable with respect to its anticanonical polarisation.
 
Even though we are not able to  completely determine the K-stability of all the smooth del Pezzo surfaces with their $\alpha$-invariants, this result  demonstrates that the $\alpha$-invariant is a very practical tool to test K-stability for given Fano varieties.  The following higher dimensional smooth Fano varieties are also instructive examples to which we can apply the method of Fujita-Odaka-Sano: 
\begin{itemize}
\item a  double cover of $\mathbb{P}^n$ ramified along a smooth hypersurface of degree $2n$, $n\geq 3$;
\item a  smooth hypersurface of degree $n$ in $\mathbb{P}^n$, $n\geq 4$.
\end{itemize}
All the smooth Fano varieties  in these families satisfy the condition of Theorem~\ref{theorem:FOS} (\cite{CP10}, \cite{CP02}). Therefore, they are K-stable, and hence the Fano manifolds defined over $\mathbb{C}$ in these families admit  K\"ahler-Einstein metrics. In particular, a smooth hypersurface of degree $n$ in $\mathbb{P}^n$ with generalized Eckardt points (\cite{CP02})  is an example of a K\"ahler-Einstein Fano manifold  whose K\"ahler-Einstein metric is verified to exist only by proving its K-stability (\cite{CP02}, \cite{CPW14}, \cite{F16}).

Recently Fujita and Odaka provided a new algebro-geometric  way to test K-(semi)stability of Fano varieties. To introduce their method, let $X$ be a $\mathbb{Q}$-factorial variety with at worst log canonical singularities,
$Z\subset X$ a closed subvariety and $D$ an effective
 $\mathbb{Q}$-divisor on $X$. The log canonical threshold
of $D$ along $Z$ is the number
$$c_Z(X,D)=\mathrm{sup}\left\{c\ \Big|\ \mbox{the  pair } (X, cD)
 ~\text{is log canonical  along}~Z.\right\}.$$
Because log canonicity is a local property, we see that
$$c_Z(X,D)=\inf_{p\in Z} \left\{c_p(X,D)\right\}.$$
If $X=\mathbb{A}^n$ and $D=(f=0)$, where $f$ is a polynomial defined over $\mathbb{A}^n$, then we also
 use the notation $c_0(f)$ for the log canonical threshold of
 $D$ at the origin.

\begin{definition}[\cite{FO16}]
Let $X$ be a $\mathbb{Q}$-Fano variety and let $m$ be a positive integer such that the plurianticanonical linear system $|-mK_X|$ is non-empty. Set $\ell_m=h^0(X,\mathcal{O}_X(-mK_X))$.  For a section $s$ in $\mathrm{H}^0(X,\mathcal{O}_X(-mK_X))$, we denote the effective divisor of the section $s$ by $D(s)$.
If $\ell_m$ sections $s_1,\ldots, s_{\ell_m}$  form  a basis of  the space $\mathrm{H}^0(X,\mathcal{O}_X(-mK_X))$, then the  anticanonical 
$\mathbb{Q}$-divisor 
\[D:=\frac{1}{\ell_m}\sum_{i=1}^{\ell_m}\frac{1}{m}D (s_i)\]
is said to be of $m$-basis type.
For a positive integer $m$, we set 
\[\delta_m(X)=\inf_{\underset{\mbox{$m$-basis type}}{D:}}c_X(X,D).\]
We set $\delta_m(X)=0$ if $|-mK_X|$ is empty.
The $\delta$-invariant of $X$ is defined by the number
\[\delta(X)=\limsup_m \delta_m(X).\]
\end{definition}

Using the $\delta$-invariant, Fujita and Odaka  set up a conjectural criterion for K-(semi)stability.
They then proved that the $\delta$-invariant gives a sufficient condition for K-(semi)stability.

\begin{conjecture}[\cite{FO16}]\label{conjecture}
A $\mathbb{Q}$-Fano variety $X$ is K-stable (resp. K-semistable) with respect to $-K_X$ if and only if $\delta(X)>1$ (resp. $\geq 1$).
\end{conjecture}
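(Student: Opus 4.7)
The plan is to split the biconditional into its two directions and attack both through a valuative reformulation of the $\delta$-invariant. The preparatory step I would carry out is to establish the identity
\[ \delta(X) = \inf_v \frac{A_X(v)}{S(v)}, \]
where $v$ runs over all divisorial valuations on $X$, $A_X(v)$ denotes the log discrepancy of $v$ with respect to $X$, and
\[ S(v) \;=\; \lim_{m\to\infty}\frac{1}{m\,\ell_m}\sum_{i=1}^{\ell_m} v\!\left(s_i^{(m)}\right) \]
is the expected vanishing order of $v$ on the anticanonical ring, computed on any basis $\{s_i^{(m)}\}$ of $\mathrm{H}^0(X,\mathcal{O}_X(-mK_X))$ that diagonalizes the $v$-adic filtration. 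Proving this identity reduces to two tasks: for each $m$, constructing an $m$-basis type divisor whose log canonical threshold along $X$ nearly equals $A_X(v)/S_m(v)$ for the near-optimal $v$; and controlling the convergence $S_m(v)\to S(v)$, which I would approach through Okounkov-body techniques applied to the graded linear series of $-K_X$.

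Granting this reformulation, the ``if'' direction (the content essentially contained in Fujita--Odaka's earlier work) proceeds by attaching to any non-trivial normal test configuration $(\mathcal{X},\mathcal{L})$ of $(X,-K_X)$ with integral central fibre the divisorial valuation $v=\operatorname{ord}_{\mathcal{X}_0}$ on $k(X)$, and by expressing the Donaldson--Futaki invariant as a positive multiple of $A_X(v)-S(v)$. Then $\delta(X)\geq 1$ immediately yields $\mathrm{DF}(\mathcal{X},\mathcal{L})\geq 0$ (K-semistability), and $\delta(X)>1$ upgrades this to strict positivity on non-trivial configurations, yielding K-stability.

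The ``only if'' implication is the genuine obstacle and is the conjectural heart of the statement. The strategy is to reverse the above correspondence: starting from a divisorial valuation $v$ with $A_X(v)/S(v)<1$ (or $\leq 1$), one seeks to construct a test configuration of $X$ whose central fibre realizes $v$, by taking $\mathrm{Proj}$ of the Rees-type algebra associated with the $v$-adic filtration on the anticanonical ring $\bigoplus_m \mathrm{H}^0(X,\mathcal{O}_X(-mK_X))$. The principal difficulty I foresee is a finite-generation statement for this filtered algebra: without finite generation the construction produces only an asymptotic object rather than a genuine algebraic one-parameter degeneration, and the destabilization cannot be promoted to a bona fide test configuration. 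This finite-generation problem is deep and intimately linked to the minimal model program, so any serious attempt at the converse must concentrate its effort precisely there, presumably after first reducing to the case where $v$ computes $\delta(X)$ and its log discrepancy is finite.
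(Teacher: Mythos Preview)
The statement you are attempting to prove is labelled \emph{Conjecture} in the paper, and the paper does not supply a proof of it. The authors attribute the conjecture to Fujita--Odaka \cite{FO16}, remark that it would follow if Berman--Gibbs stability were known to coincide with K-stability, and then separately state as Theorem~\ref{theorem:sufficient} the one direction that \emph{is} proved in \cite{FO16}, namely that $\delta(X)>1$ (resp.\ $\geq 1$) implies K-stability (resp.\ K-semistability). The converse is treated throughout the paper as open; indeed the authors explicitly note in Section~6 that, because the conjecture is unverified, their computation $\delta(\mathbb{F}_1)<1$ and $\delta(S_7)<1$ cannot by itself establish non-K-semistability.

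So there is no ``paper's own proof'' to compare against. Your outline is a reasonable roadmap for an attack on the conjecture: the valuative reformulation $\delta(X)=\inf_v A_X(v)/S(v)$ and the identification of the Donaldson--Futaki invariant with a multiple of $A_X(v)-S(v)$ for the valuation of the central fibre are both correct in spirit and were indeed the route taken in subsequent literature. You have also correctly located the essential obstruction in the converse direction---the finite generation of the Rees-type algebra of the $v$-adic filtration---and rightly flag it as the deep step. But as written your proposal is a plan rather than a proof: the finite-generation statement is not established, and without it the ``only if'' direction remains conjectural, exactly as the paper presents it. In short, you have not made an error so much as attempted something the paper itself leaves open.
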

Note that Conjecture~\ref{conjecture} is true if Berman-Gibbs stability (\cite{F16a})  is equivalent to K-stability.

\begin{theorem}[\cite{FO16}]\label{theorem:sufficient}
Let $X$ be a $\mathbb{Q}$-Fano variety. If $\delta(X)>1$ (resp. $\geq 1$), then $(X, -K_X)$ is K-stable (resp. K-semistable).
\end{theorem}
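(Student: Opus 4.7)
The plan is to translate the $\delta$-invariant, defined via log canonical thresholds of $m$-basis type divisors, into a statement about divisorial valuations on $X$, and then to appeal to the valuative criterion for K-(semi)stability due to Fujita and Li. Recall that this criterion asserts that a $\mathbb{Q}$-Fano variety $X$ is K-semistable (respectively, K-stable) if and only if
\[
A_X(E)\,(-K_X)^{\dim X}\;\geq\;\int_{0}^{\infty}\mathrm{vol}\bigl(-K_X-tE\bigr)\,dt
\]
(respectively, with strict inequality) for every prime divisor $E$ over $X$, where $A_X(E)$ is the log discrepancy. It therefore suffices to deduce such lower bounds on $A_X(E)$ from the hypothesis on $\delta(X)$.

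The central step is the identity $\delta_m(X)=\inf_{E} A_X(E)/S_m(E)$, where $E$ ranges over all prime divisors over $X$ and
\[
S_m(E)\;:=\;\frac{1}{m\,\ell_m}\sum_{j\geq 1}\dim\mathcal{F}^{j}_{E}H^0(X,\mathcal{O}_X(-mK_X)),
\]
with $\mathcal{F}^{j}_{E}:=\{s\,:\,\mathrm{ord}_E(s)\geq j\}$ the filtration by order of vanishing along $E$. For the inequality $\leq$, I pick a basis $s_1,\ldots,s_{\ell_m}$ of $H^0(X,\mathcal{O}_X(-mK_X))$ adapted to $\mathcal{F}^{\bullet}_E$; a bookkeeping identity gives $\sum_i\mathrm{ord}_E(s_i)=\sum_{j\geq 1}\dim\mathcal{F}^{j}_E$, so the associated $m$-basis type divisor $D$ satisfies $\mathrm{ord}_E(D)=S_m(E)$, whence $\delta_m(X)\leq c_X(X,D)\leq A_X(E)/\mathrm{ord}_E(D)=A_X(E)/S_m(E)$. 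For the reverse inequality, I would use that $\mathrm{ord}_E(D)\leq S_m(E)$ for every basis (the adapted basis maximizes $\sum_i\mathrm{ord}_E(s_i)$), combined with the fact that the log canonical threshold of any effective $\mathbb{Q}$-divisor on $X$ is computed by some divisorial valuation.

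It remains to pass to the limit $m\to\infty$. The Okounkov body / graded linear series asymptotics give
\[
\lim_{m\to\infty}S_m(E)\;=\;S(E)\;:=\;\frac{1}{(-K_X)^{\dim X}}\int_0^{\infty}\mathrm{vol}\bigl(-K_X-tE\bigr)\,dt,
\]
and, combined with $\delta(X)=\limsup_m\delta_m(X)$, this yields $A_X(E)\geq S(E)$ (respectively $A_X(E)>S(E)$) for every prime divisor $E$ over $X$ whenever $\delta(X)\geq 1$ (respectively $\delta(X)>1$); plugging this into the Fujita--Li valuative criterion concludes the proof. The main technical obstacle is the reverse inequality in the identity for $\delta_m(X)$, together with the interchange of the infimum over $E$ with the limit in $m$: since the convergence $S_m(E)\to S(E)$ is only pointwise in $E$ and not evidently monotone, the final step requires either a uniform error estimate on $S_m(E)-S(E)$ or a reduction to those divisorial valuations $E$ for which $A_X(E)/S(E)$ is bounded.
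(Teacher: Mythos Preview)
The paper does not give its own proof of this statement; the entire proof environment reads ``See \cite{FO16}.'' So there is no in-paper argument to compare against, only the cited source.

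Your sketch is the valuative route that was worked out after \cite{FO16}, essentially the Blum--Jonsson reformulation $\delta(X)=\inf_E A_X(E)/S(E)$ combined with the Fujita--Li valuative criterion. This is a correct and by now standard path to the result, but it is not the argument in \cite{FO16} itself. Fujita and Odaka do not pass through the valuative criterion; instead they work directly with test configurations and the Ding invariant, showing (roughly) that any test configuration can be approximated by basis-type divisors in such a way that the log canonical threshold of the basis-type divisor controls the sign of the Ding/Donaldson--Futaki invariant. Your approach buys a cleaner conceptual picture and connects $\delta$ to the now-ubiquitous quantity $S(E)$; the original approach is more self-contained in that it does not require the Fujita--Li criterion as a black box.

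The two obstacles you flag are genuine. The reverse inequality $\delta_m(X)\geq\inf_E A_X(E)/S_m(E)$ does need the observation that an adapted basis maximizes $\sum_i\mathrm{ord}_E(s_i)$ together with the fact that the lct of any effective $\mathbb{Q}$-divisor is computed by some divisorial valuation; both are standard but must be stated. The more serious point is the interchange of $\inf_E$ and $\lim_m$: pointwise convergence $S_m(E)\to S(E)$ is not enough, and the actual proof (Blum--Jonsson) establishes a uniform estimate $|S_m(E)-S(E)|\leq \epsilon_m\cdot A_X(E)$ with $\epsilon_m\to 0$ independent of $E$, obtained from approximation results for volumes of graded linear series. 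Without that uniformity your final step does not close, so as written the proposal is a correct outline with an acknowledged gap rather than a complete proof.
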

\begin{proof}
See \cite{FO16}.
\end{proof}

In the present article we utilize this theorem to verify the K-(semi)stability of smooth del Pezzo surfaces, i.e., we prove the following.
\begin{maintheorem*}\label{theorem:main}
Let $S$ be a smooth del Pezzo surface of degree $d$. 
\begin{itemize}
\item If $d\leq 5$, then  $\delta(S)\geq\frac{15}{14}>1$.
\item If $S\cong \mathbb{P}^2$, $\mathbb{P}^1\times\mathbb{P}^1$ or the del Pezzo surface of degree $6$, then $\delta(S)=1$.
\item If $S\cong\mathbb{F}_1$ or the del Pezzo surface of degree $7$, then $\delta(S)<1$.
\end{itemize}
\end{maintheorem*}

Through Theorem~\ref{theorem:sufficient}, these estimations of the $\delta$-invariants immediately  yield the following.

\begin{corollary}\label{corollary:stable}
Let $S$ be a smooth del Pezzo surface of degree $d$. Then the pair 
\[(S, -K_{S}) \mbox{ is } \left\{\begin{array}{ll}
\mbox{K-semistable  if $S\cong \mathbb{P}^2$, $\mathbb{P}^1\times\mathbb{P}^1$ or the del Pezzo surface of degree $6$.}\\
\mbox{K-stable    if $d\leq 5$.}\\
\end{array}\right.\]
\end{corollary}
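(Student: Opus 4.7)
\medskip

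\noindent\textbf{Proof proposal.} The plan is to estimate $\delta(S)$ directly from the definition and then invoke Theorem~\ref{theorem:sufficient}. The starting observation is that, given any prime divisor $E$ over $S$, one may choose a basis $s_1,\dots,s_{\ell_m}$ of $H^0(S,-mK_S)$ adapted to the filtration by order of vanishing along $E$; the resulting $m$-basis type divisor $D$ satisfies
\[
\operatorname{ord}_E(D) \;=\; \frac{1}{m\,\ell_m}\sum_{j\ge 1} h^0\!\left(S,\,-mK_S-jE\right),
\]
and this quantity tends as $m\to\infty$ to
\[
S_S(E) \;:=\; \frac{1}{(-K_S)^2}\int_0^{\infty}\operatorname{vol}(-K_S-tE)\,dt.
\]
Combined with the elementary inequality $c_S(S,D)\cdot\operatorname{ord}_E(D)\le A_S(E)$, valid for every prime divisor $E$ over $S$, this reduces each estimate on $\delta(S)$ to estimates on the ratios $A_S(E)/S_S(E)$.

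For $d\le 5$, the target is $A_S(E)/S_S(E)\ge 15/14$ for every prime divisor $E$ over $S$. I would stratify by the center $\operatorname{center}_S(E)$: when the center is a curve in $S$, the bound reduces to a short case analysis over the finitely many $(-1)$-curves of $S$ and the boundary of the anticanonical cone; when the center is a point $p$, a Zariski decomposition of $-K_S-tE$ on a suitable blow-up produces a piecewise polynomial expression for $\operatorname{vol}(-K_S-tE)$ that can be integrated explicitly against $t$. The most delicate configurations are those in which $p$ lies on the maximum possible number of $(-1)$-curves, since blow-ups then produce exceptional divisors with small log discrepancy compared with the volume. All computations are uniform in $m$, so taking $\limsup_m$ yields the uniform bound $\delta(S)\ge 15/14$.

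For $S\cong\mathbb{P}^2$, $\mathbb{P}^1\times\mathbb{P}^1$ or the del Pezzo surface of degree $6$, the upper bound $\delta(S)\le 1$ follows by exhibiting the torus-weight monomial basis of $H^0(-mK_S)$: by symmetry, the averaged $m$-basis type divisor is proportional to the sum of the toric boundary divisors, whose log canonical threshold equals $1$. The reverse inequality $\delta(S)\ge 1$ follows from the valuative reduction above together with the identity $A_S(E)=S_S(E)$ for each torus-invariant prime divisor $E$, which itself reflects the K\"ahler-Einstein property of these toric Fano surfaces. For $S\cong\mathbb{F}_1$ or the del Pezzo surface of degree $7$, one singles out a $(-1)$-curve $E$ (the negative section in $\mathbb{F}_1$; one of the two exceptional divisors in the degree $7$ case) and computes $\operatorname{vol}(-K_S-tE)$ directly via Zariski decomposition; for example on $\mathbb{F}_1$ one obtains $S_S(E)=7/6>1=A_S(E)$, hence $\delta(\mathbb{F}_1)\le 6/7<1$, and the degree $7$ case is analogous.

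The hard part is the uniform lower bound $\delta(S)\ge 15/14$ for $d\le 5$. Unlike the $\alpha$-invariant, where a single bad divisor can already determine the estimate, $\delta$ is controlled by averages; but on the other hand \emph{every} candidate extremal valuation must be checked. The $(-1)$-curve configurations become intricate as $d$ decreases (ten lines in degree $5$, sixteen in degree $4$, and so on), and the correct extremal valuation is not a priori obvious. Systematically carrying out the Zariski decomposition and integrating $\operatorname{vol}(-K_S-tE)$ for each geometric configuration that arises is the central technical task.
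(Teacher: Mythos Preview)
Your route is genuinely different from the paper's, and it has a real gap.

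In the paper, the corollary is a one-line consequence of the Main Theorem together with Theorem~\ref{theorem:sufficient}; all the work sits in the Main Theorem. There the authors never pass through the valuative quantity $A_S(E)/S_S(E)$. Instead, for each point $p\in S$ they blow down $9-d$ disjoint $(-1)$-curves away from $p$ to reach $\mathbb{P}^2$ (or $\mathbb{P}^1\times\mathbb{P}^1$), write each basis section locally as a polynomial $f_{m,i}(x,y)$, and estimate $c_0\bigl(\prod_i f_{m,i}\bigr)$ by controlling the Newton polygon of the product. The key device is an injective assignment of a monomial to each $f_{m,i}$ (Lemmas~\ref{lemma:injection1}--\ref{lemma:injection-after-coordinate-change}), together with the weighted-blow-up argument of Proposition~\ref{c_0}. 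The case analysis is over configurations of curves through $p$, not over valuations $E$.

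Your proposal, by contrast, aims for $\inf_E A_S(E)/S_S(E)\ge 15/14$. The difficulty you have not addressed is that this infimum runs over \emph{all} prime divisors $E$ over $S$, not just curves on $S$ or the single exceptional divisor of one blow-up at $p$. For each point $p$ there are infinitely many divisorial valuations with center $p$ (coming from arbitrary sequences of infinitely near blow-ups), and ``Zariski decomposition on a suitable blow-up'' computes $S_S(E)$ for one such $E$, not a uniform lower bound for all of them. Reducing this infinite family to a finite check is exactly the hard step; it is what later techniques (e.g.\ Abban--Zhuang) were designed to do, and it is not available from the tools cited here. The paper sidesteps this entirely by bounding $c_p(S,D)$ directly for every $m$-basis type $D$.

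A second issue: for the toric surfaces you justify $\delta(S)\ge 1$ by checking $A_S(E)=S_S(E)$ only for torus-invariant $E$ and then appealing to the K\"ahler--Einstein property. The first does not suffice without an additional argument that torus-invariant valuations compute the infimum, and the second is circular, since the entire point of the paper is to establish K-(semi)stability by purely algebro-geometric means. The paper instead proves $\delta\ge 1$ for these surfaces by the same Newton-polygon method, again avoiding any reference to valuations or to metrics.
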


Since the sufficient condition for K-stability given by the $\alpha$-invariant  in \eqref{inequality} involves the dimensions of Fano varieties,
the following theorem shows that the $\alpha$-invariant  itself is paralyzed in testing K-stability of products of Fano varieties.

\begin{theorem}
Let $X$  and $Y$ be smooth Fano varieties.  Then 
$$\alpha (X\times Y)=\min\left\{ \alpha (X), \alpha(Y)\right\}.$$
\end{theorem}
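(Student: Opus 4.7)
We prove the two matching inequalities separately.

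\textbf{Upper bound.} For any $\varepsilon>0$ pick an effective $\mathbb{Q}$-divisor $D_X\sim_{\mathbb{Q}}-K_X$ with $\mathrm{lct}(X,D_X)<\alpha(X)+\varepsilon$ and any effective $D_Y\sim_{\mathbb{Q}}-K_Y$. The pullback $D:=p_1^{*}D_X+p_2^{*}D_Y$ is an effective $\mathbb{Q}$-divisor on $X\times Y$ with $D\sim_{\mathbb{Q}}-K_{X\times Y}$. In local product coordinates $(u,v)$, $D$ is cut out by a product $f(u)g(v)$ of polynomials in disjoint sets of variables; a log resolution compatible with the product structure (obtained by taking the product of log resolutions of $(X,D_X)$ and $(Y,D_Y)$) yields the product formula
$$
\mathrm{lct}_{(x,y)}(X\times Y,\,p_1^{*}D_X+p_2^{*}D_Y)=\min\bigl\{\mathrm{lct}_x(X,D_X),\,\mathrm{lct}_y(Y,D_Y)\bigr\}.
$$
Thus $\mathrm{lct}(X\times Y,D)\le\mathrm{lct}(X,D_X)<\alpha(X)+\varepsilon$; letting $\varepsilon\to 0$ and combining with the symmetric choice yields $\alpha(X\times Y)\le\min\{\alpha(X),\alpha(Y)\}$.

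\textbf{Lower bound.} Set $\lambda:=\min\{\alpha(X),\alpha(Y)\}$, fix a rational $\lambda'<\lambda$ and any effective $D\sim_{\mathbb{Q}}-K_{X\times Y}$; we must show $(X\times Y,\lambda'D)$ is log canonical. Sort the components of $D$ according to their behaviour under the two projections to obtain a decomposition
$$
D=p_1^{*}A+p_2^{*}B+E
$$
with $A$ and $B$ effective $\mathbb{Q}$-divisors on $X$ and $Y$ respectively and no component of $E$ pulled back from either factor. For every $y\in Y$ the restriction $(D-p_2^{*}B)|_{X_y}=A+E|_{X_y}$ is a well-defined effective $\mathbb{Q}$-divisor on the fibre $X_y\cong X$, and since any divisor pulled back from $Y$ restricts trivially to a fibre, one computes $A+E|_{X_y}\sim_{\mathbb{Q}}-K_X$. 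The definition of $\alpha(X)$ then gives that $(X,\lambda'(A+E|_{X_y}))$ is log canonical. When $\dim Y=1$, each $X_y$ is a smooth Cartier divisor on $X\times Y$, and inversion of adjunction transfers this fibrewise log canonicity to log canonicity of $(X\times Y,\lambda'(D-p_2^{*}B))$ in a neighbourhood of each $X_y$. To reinstate $\lambda'p_2^{*}B$, restrict $E$ to a generic fibre $Y_x$ to obtain an effective representative of $-K_Y-B$, which dominates $B$ by an effective $\mathbb{Q}$-divisor numerically equivalent to $-K_Y$; hence $\mathrm{ord}_{y_0}(B)\le T(y_0;-K_Y)\le 1/\alpha(Y)$ at every $y_0\in Y$, so $\lambda'\cdot\mathrm{ord}_{y_0}(B)<1$ and adding $\lambda'p_2^{*}B$ preserves log canonicity. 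The completely symmetric argument, with $X$ and $Y$ interchanged, controls the components coming from $p_1^{*}A$. For $\dim Y\ge 2$ one reduces to the curve case by iteratively cutting $Y$ with $\dim Y-1$ general members of $|-mK_Y|$ through a prescribed base point, applying inversion of adjunction at each stage.

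\textbf{Main obstacle.} All the substance of the argument is in the lower bound, and its subtlest point is the higher-dimensional case $\dim Y\ge 2$: the slices $X_{y_0}$ are no longer Cartier divisors on $X\times Y$, so the lifting of log canonicity must be done by iterated inversion of adjunction along successive general sections of $|-mK_Y|$, and one has to verify that both the identity $(D-p_2^{*}B)|_{X_y}\sim_{\mathbb{Q}}-K_X$ and the vertical multiplicity bound $\mathrm{ord}_{y_0}(B)\le 1/\alpha(Y)$ propagate through every slicing step. The $\min$ in the conclusion is forced by the symmetric roles of the two projections: horizontal components of $D$ are bounded by $\alpha(X)$ through $p_2$, while vertical components are bounded by $\alpha(Y)$ through $p_1$.
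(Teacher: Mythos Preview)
The paper does not give a self-contained proof; it simply cites \cite[Lemma~2.29]{ChS08}. So the comparison is between your argument and that external reference, whose proof is an induction on the number of factors together with inversion of adjunction and a careful multiplicity analysis at the bad point.

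Your upper bound is fine, and your lower-bound argument when $\dim Y=1$ is essentially correct: writing $D=b_0X_{y_0}+D'$ with $X_{y_0}\not\subset\operatorname{Supp}(D')$, one has $D'|_{X_{y_0}}\sim_{\mathbb Q}-K_X$, so $(X_{y_0},\lambda' D'|_{X_{y_0}})$ is log canonical, and inversion of adjunction gives what you want provided $\lambda' b_0\le 1$; on a curve this last bound follows from $b_0\le\deg(-K_Y)=1/\alpha(Y)$.

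The gap is in the passage to $\dim Y\ge 2$. The inequality you invoke, $\operatorname{ord}_{y_0}(B)\le 1/\alpha(Y)$, is false in higher dimension: for an effective $D_Y\sim_{\mathbb Q}-K_Y$ one only has $\operatorname{mult}_{y_0}(D_Y)\le\dim Y/\alpha(Y)$ (coming from the blow-up of $y_0$), and this bound is sharp in easy examples such as $Y=\mathbb P^2$ with $D_Y$ a union of lines through $y_0$. Your slicing scheme does not repair this: if $C\subset Y$ is a general complete-intersection curve through $y_0$, the coefficient of $X_{y_0}$ in $D|_{X\times C}$ equals $\operatorname{mult}_{y_0}(B)$, so after slicing you still need $\lambda'\cdot\operatorname{mult}_{y_0}(B)\le 1$, which you have no way to guarantee once $\dim Y\ge 2$. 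Nor can you fall back on the inductive hypothesis for $X\times H$ with $H\in|-mK_Y|$, because $D|_{X\times H}$ is not $\mathbb Q$-linearly equivalent to $-K_{X\times H}$ (and $H$ is not Fano for $m\ge 1$). The proof in \cite{ChS08} avoids this by arguing with fibers of \emph{both} projections simultaneously and using that $(Y,\lambda' D|_{Y_{x_0}})$ is log canonical, rather than trying to convert log canonicity of $(Y,\lambda'B)$ into a pointwise multiplicity bound; you should consult that argument to close the gap.
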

\begin{proof}
See \cite[Lemma~2.29]{ChS08}. 
\end{proof}
We strongly believe that the same formula holds for the $\delta$-invariant.
\begin{conjecture}\label{conjecture:product}
Let $X$  and $Y$ be smooth Fano varieties.  Then 
$$\delta (X\times Y)=\min\left\{ \delta (X), \delta(Y)\right\}.$$
\end{conjecture}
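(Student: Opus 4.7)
The strategy is to transfer the problem to the valuative characterization of the $\delta$-invariant established by Fujita--Odaka and Blum--Jonsson: for any $\mathbb{Q}$-Fano variety $X$,
\[
\delta(X)=\inf_{E}\frac{A_X(E)}{S_X(E)},
\]
where the infimum ranges over all prime divisors $E$ over $X$, $A_X(E)$ is the log discrepancy, and $S_X(E)=\frac{1}{(-K_X)^{\dim X}}\int_0^\infty\mathrm{vol}(-K_X-tE)\,dt$. The plan is to establish the two inequalities separately.

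For the bound $\delta(X\times Y)\le\min\{\delta(X),\delta(Y)\}$, I would pull back divisorial valuations from the factors. Let $p\colon X\times Y\to X$ and $q\colon X\times Y\to Y$ denote the natural projections and set $n=\dim X$, $m=\dim Y$. Given a prime divisor $E$ over $X$, realized on a birational model $\pi\colon X'\to X$, the divisor $E\times Y$ on $X'\times Y$ is prime and induces a divisorial valuation $p^{\ast}E$ on $X\times Y$ with $A_{X\times Y}(p^{\ast}E)=A_X(E)$, by the splitting $K_{X'\times Y}=p^{\ast}K_{X'}+q^{\ast}K_Y$. A K\"unneth-type identity $\mathrm{vol}(p^{\ast}D+q^{\ast}F)=\binom{n+m}{n}\mathrm{vol}(D)\,\mathrm{vol}(F)$ for big classes, applied to the decomposition $-K_{X\times Y}-t\,p^{\ast}E=p^{\ast}(-K_X-tE)+q^{\ast}(-K_Y)$ inside the integral defining $S$, would then yield $S_{X\times Y}(p^{\ast}E)=S_X(E)$. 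Hence $A/S$ is preserved under pullback and $\delta(X\times Y)\le\delta(X)$; symmetrically $\le\delta(Y)$.

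For the reverse bound, the plan is to analyze an arbitrary divisorial valuation $w$ on $X\times Y$. By the density of quasi-monomial valuations in the Berkovich analytification of $X\times Y$, one may first replace $w$ by a quasi-monomial valuation on a suitable log resolution; after a further modification compatible with the two projections, such a valuation should be expressible as a positive monomial combination $w=a\,v_X\oplus b\,v_Y$ of valuations pulled back from $X$ and $Y$. For a product monomial valuation the log smoothness of $p^{\ast}E_X\cup q^{\ast}E_Y$ would give the additivity $A(w)=a\,A_X(v_X)+b\,A_Y(v_Y)$, while the Newton--Okounkov body of $-K_{X\times Y}$ with respect to a product flag factors as a product of the Newton--Okounkov bodies of $-K_X$ and $-K_Y$, providing a decomposition of $S(w)$ amenable to a convexity estimate of the form $A(w)/S(w)\ge\min\{A(v_X)/S(v_X),\,A(v_Y)/S(v_Y)\}\ge\min\{\delta(X),\delta(Y)\}$.

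The hard part is that a general divisorial valuation on $X\times Y$ is not quasi-monomial with respect to any product-adapted log smooth structure, so the reduction to product-type valuations is not automatic and must be carried out without degrading the ratio $A/S$. A natural route around this is equivariant: to introduce an auxiliary torus acting through a degeneration of $X\times Y$ and invoke a suitable form of equivariant K-stability to replace the infimum over all valuations by an infimum over torus-invariant---hence product-adapted---valuations. Verifying that this equivariant $\delta$-invariant equals the usual one, and controlling the combinatorics of the resulting Newton--Okounkov bodies, is the step I expect to require the most care.
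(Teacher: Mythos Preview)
The statement you are addressing is labeled a \emph{Conjecture} in the paper; the paper does not prove it and offers only the remark that the case when one factor is $\mathbb{P}^1$ is easy. There is therefore no proof in the paper to compare your proposal against.

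On the substance of your proposal: the inequality $\delta(X\times Y)\le\min\{\delta(X),\delta(Y)\}$ is fine. Your pullback argument is correct once the K\"unneth volume identity is checked, and the log discrepancy computation is standard.

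The reverse inequality, however, has a real gap. Your plan is to replace an arbitrary divisorial valuation $w$ by a quasi-monomial one and then assert that after a modification it becomes a monomial combination $a\,v_X\oplus b\,v_Y$ of valuations pulled back from the factors. This step fails: a quasi-monomial valuation on a log resolution of $X\times Y$ need not have its center, or the relevant snc configuration, compatible with the product structure, and there is no general procedure that reduces it to one while controlling $A/S$. The proposed equivariant fix is not available either, since a general product $X\times Y$ carries no torus action, and degenerating to acquire one would change the variety whose $\delta$-invariant you are computing; the equality of equivariant and ordinary $\delta$ applies to a fixed variety with a given torus action, not across a degeneration.

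For context, this conjecture was subsequently proved by Zhuang using a quite different idea: rather than decomposing a valuation on the product, one restricts it (via a filtration) to a general fiber $X\times\{y\}$ and uses an inversion-of-adjunction argument to compare $A/S$ on the product with $A/S$ on the factor. If you want to pursue the harder inequality, that restriction-to-fiber approach is the one to study.
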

\begin{remark}
One can easily verify the conjecture in case when either $X$ or $Y$ is $1$-dimensional, i.e., $\mathbb{P}^1$.
\end{remark}
From this formula we see that 
the $\delta$-invariant, unlike the $\alpha$-invariant,  keeps its ability to test K-stability  in products of Fano varieties. For instance, Main Theorem implies the following.

\begin{corollary}
Assume that Conjecture~\ref{conjecture:product} holds. Then a product of smooth del Pezzo surfaces of degrees $\leq 5$ is K-stable. A product of K-semistable   smooth del Pezzo surfaces  is K-semistable.
\end{corollary}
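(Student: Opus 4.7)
The plan is to reduce both statements to a direct application of Theorem~\ref{theorem:sufficient} after establishing a lower bound on the $\delta$-invariant of the product via Conjecture~\ref{conjecture:product}.

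For the first assertion, let $S_1, \ldots, S_n$ be smooth del Pezzo surfaces of degree at most $5$. By the Main Theorem, each factor satisfies $\delta(S_i) \geq \tfrac{15}{14}$. We would apply Conjecture~\ref{conjecture:product} inductively on $n$, at each stage writing $\prod_{i=1}^{n} S_i = S_1 \times \bigl(\prod_{i=2}^{n} S_i\bigr)$ and invoking the two-factor formula. Since a product of smooth Fano varieties is again smooth and Fano, the conjecture is applicable throughout the induction, delivering $\delta\bigl(\prod_{i=1}^{n} S_i\bigr) = \min_i \delta(S_i) \geq \tfrac{15}{14} > 1$. Theorem~\ref{theorem:sufficient} then forces K-stability of the product.

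For the second assertion, Corollary~\ref{corollary:stable} identifies the smooth del Pezzo surfaces shown to be K-semistable in this paper as the surfaces of degree at most $5$ together with $\mathbb{P}^2$, $\mathbb{P}^1 \times \mathbb{P}^1$, and the del Pezzo surface of degree $6$, and the Main Theorem assigns a $\delta$-invariant at least $1$ to each of these. The same inductive use of Conjecture~\ref{conjecture:product} yields $\delta\bigl(\prod_i S_i\bigr) \geq 1$, and Theorem~\ref{theorem:sufficient} then gives K-semistability of the product.

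There is essentially no obstacle in the argument beyond the unconditional appeal to Conjecture~\ref{conjecture:product}. The only technical point worth checking is the legitimacy of the induction, which is immediate because a product of two smooth Fano varieties is again smooth and Fano, so the two-factor formula transfers at every step. Consequently, once Conjecture~\ref{conjecture:product} is granted, the corollary is nothing more than a bookkeeping exercise combining it with the Main Theorem and Theorem~\ref{theorem:sufficient}.
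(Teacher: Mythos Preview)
Your argument is correct and matches the paper's intended reasoning (the corollary is stated without proof, as an immediate consequence of the Main Theorem, Theorem~\ref{theorem:sufficient}, and Conjecture~\ref{conjecture:product}). One small point worth tightening in the second assertion: the statement is about \emph{K-semistable} smooth del Pezzo surfaces, not merely those \emph{shown} to be K-semistable, so to conclude that every K-semistable factor has $\delta \geq 1$ you must also invoke Proposition~\ref{proposition:non-stable} to exclude $\mathbb{F}_1$ and the degree~$7$ surface from the list of possible factors; Corollary~\ref{corollary:stable} alone only gives the forward implication.
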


It is certain that the $\delta$-invariant produces a new method to verify existence of K\"ahler-Einstein metrics on Fano varieties through Theorem~\ref{theorem:CDST}. It is no exaggeration to say that this new method is simpler and  more coherent than the method by the $\alpha$-invariant. 

\begin{corollary}
A smooth del Pezzo surface of degree $\leq 5$ defined over $\mathbb{C}$ admits a K\"ahler-Einstein metric.
\end{corollary}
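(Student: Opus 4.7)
The proof is a straightforward chaining of results already assembled in the introduction, so the plan is essentially to verify that every hypothesis in each cited theorem is met.

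First, I would invoke the Main Theorem, which asserts that any smooth del Pezzo surface $S$ with $\deg S \leq 5$ satisfies $\delta(S) \geq \tfrac{15}{14}$. Since $\tfrac{15}{14} > 1$, this gives the strict inequality $\delta(S) > 1$ needed to move on.

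Next I would apply Theorem~\ref{theorem:sufficient} of Fujita--Odaka: the strict bound $\delta(S) > 1$ implies that the pair $(S, -K_S)$ is K-stable. Since a smooth del Pezzo surface is in particular a $\mathbb{Q}$-Fano variety, the theorem applies with no further verification, and K-stability trivially implies K-polystability.

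Finally, $S$ is a smooth Fano variety defined over $\mathbb{C}$, so Theorem~\ref{theorem:CDST} of Chen--Donaldson--Sun--Tian applies: K-polystability of $(S, -K_S)$ is equivalent to the existence of a Kähler--Einstein metric on $S$. Combining the three steps yields the corollary. There is no genuine obstacle here beyond recognising that the result is a direct deduction from the Main Theorem together with Theorems~\ref{theorem:sufficient} and~\ref{theorem:CDST}; all of the analytic and algebraic difficulty has been absorbed into those cited statements, while the arithmetic content $\tfrac{15}{14} > 1$ is what makes the chain go through.
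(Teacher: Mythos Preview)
Your proposal is correct and follows exactly the route the paper intends: the corollary is stated without proof because it is an immediate consequence of the Main Theorem (giving $\delta(S)>1$), Theorem~\ref{theorem:sufficient} (giving K-stability), and Theorem~\ref{theorem:CDST} (giving the K\"ahler--Einstein metric), precisely as you chain them.
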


Since Conjecture~\ref{conjecture} has not been verified completely, at this moment we cannot say that the last statement of Main Theorem implies that the Hirzebruch surface $\mathbb{F}_1$
and the del Pezzo surface of degree $7$ are not K-semistable. 
Yet it should be remarked here that  algebro-geometric ways to prove their non-K-semistability are already known.
\begin{proposition}\label{proposition:non-stable}
The del Pezzo surfaces of degree $7$ and the Hirzebruch surface $\mathbb{F}_1$ are not K-semistable.
\end{proposition}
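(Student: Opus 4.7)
The plan is to apply the Fujita--Li valuative criterion for K-semistability: a $\mathbb{Q}$-Fano variety $X$ is K-semistable if and only if
\[
\beta(F):=A_X(F)\cdot(-K_X)^n-\int_0^{\infty}\mathrm{vol}\bigl(-K_X-tF\bigr)\,dt\;\ge\;0
\]
for every prime divisor $F$ over $X$. It then suffices to exhibit, in each of the two cases, one divisor $F$ with $\beta(F)<0$. The natural candidates are the most rigid $(-1)$-curves on each surface: the exceptional section of $\mathbb{F}_1$, and either of the two exceptional divisors of the blow-down of the degree-$7$ surface to $\mathbb{P}^2$.

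For $X=\mathbb{F}_1$ I would pick $F=C$, the $(-1)$-section, and verify that $-K_X-tC=(2-t)C+3f$ remains nef on the whole interval $[0,2]$ and ceases to be pseudoeffective beyond $t=2$; then $\mathrm{vol}(-K_X-tC)=(2-t)(4+t)$ on $[0,2]$. The corresponding integral comes out to $28/3$, so $\beta(C)=8-28/3=-4/3<0$, ruling out K-semistability of $\mathbb{F}_1$.

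For $S=\mathrm{Bl}_{p_1,p_2}\mathbb{P}^2$ and $F=E_1$, direct intersection calculations show nefness of $-K_S-tE_1$ precisely on $[0,1]$; for $t\in[1,2]$ the unique $(-1)$-curve with negative intersection is the strict transform $L=H-E_1-E_2$ of the line through $p_1$ and $p_2$, which peels off with coefficient $t-1$ and gives a positive part with self-intersection $8-4t$. Substituting into the two-piece integral and subtracting from $A_S(E_1)\cdot(-K_S)^2=7$ yields
\[
\beta(E_1)\;=\;7-\left(\int_{0}^{1}(7-2t-t^{2})\,dt+\int_{1}^{2}(8-4t)\,dt\right)\;=\;7-\tfrac{23}{3}\;=\;-\tfrac{2}{3}\;<\;0.
\]

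The main technical content is the Zariski decomposition of $-K_S-tE_1$ on the second range $[1,2]$, i.e., checking that $L$ is the unique extremal curve with negative intersection and confirming the pseudoeffective threshold $\tau=2$ using the fact that $E_1$, $E_2$, and $L$ generate the pseudoeffective cone of $S$. As alternative routes one could quote Ross--Thomas slope instability of $(X,-K_X)$ along the same curves, which produces an explicit destabilising test configuration, or observe that both surfaces are toric and that the barycenters of their anticanonical moment polytopes are displaced from the origin, violating Donaldson's toric criterion; each gives the same conclusion.
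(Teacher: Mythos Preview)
Your computations are correct: for $\mathbb{F}_1$ the divisor $-K_X-tC=(2-t)C+3f$ is indeed nef on $[0,2]$ with volume $(2-t)(4+t)$, giving $\beta(C)=8-\tfrac{28}{3}=-\tfrac{4}{3}$; and for the degree-$7$ surface the Zariski decomposition you describe is right (the coefficient of $L$ is $t-1$, the positive part has $P^2=8-4t$, and writing $3H-(1+t)E_1-E_2=3L+(2-t)E_1+2E_2$ confirms $\tau=2$), yielding $\beta(E_1)=7-\tfrac{23}{3}=-\tfrac{2}{3}$.

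The paper itself does not give a proof of this proposition: it simply refers the reader to \cite{F15}, \cite{HKLP11}, and \cite{PR09}. Your argument via the Fujita--Li $\beta$-invariant is exactly the content of the first of these references (Fujita's volume-function criterion), so your main route coincides with one of the cited approaches. The alternatives you mention --- Ross--Thomas slope instability along the same $(-1)$-curves, and the toric barycenter test --- are precisely what the other two citations (\cite{PR09}, \cite{HKLP11}) provide. In short, you have reproduced, with full detail, the argument the paper only points to, and correctly identified the remaining cited methods as equivalent alternatives.
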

\begin{proof}
For instance, see \cite[Examples~6.5 and 6.5]{F15}, \cite[Proposition~3.1]{HKLP11}, \cite{PR09}
\end{proof}

Since the del Pezzo surfaces in the second statement of Main Theorem are all toric varieties, they cannot be K-stable. Therefore, Corollary~\ref{corollary:stable} and Proposition~\ref{proposition:non-stable} verify that Conjecture~\ref{conjecture} holds good for 2-dimensional Fano manifolds.

\section{Preliminaries}
Let $f$ be a polynomial over the field  $k$ in variables $z_1,\ldots, z_n$. Assign positive integral weights $w(z_i)$ to the variables $z_i$. Let $w(f)$ be the weighted multiplicity of $f$ at the origin ($=$ the lowest weight of the monomials occurring in $f$) and let $f_w$ denote the weighted homogeneous leading term of $f$ ($=$ the term of the monomials  in $f$ with the  weighted multiplicity of $f$).

 Let $g$ be a polynomial over the field $k$ in  $z_2,\ldots, z_n$ and set
\[h(z_1,\ldots,z_n)=f(z_1+g(z_2,\ldots, z_n), z_2,\ldots, z_n).\]
If $z_1+g(z_2,\ldots, z_n)$ is weighted homogeneous with respect to the given weights $w(z_1),\ldots, w(z_n)$, it is clear that
\[h_w(z_1,\ldots,z_n)=f_w(z_1+g(z_2,\ldots, z_n), z_2,\ldots, z_n).\]
Let $f_1,\ldots, f_\ell$ be polynomials  over the field $k$ in  $z_1,\ldots, z_n$. With respect to the given weights $w(z_1),\ldots, w(z_n)$, we easily see that
\[\left(\prod_{i=1}^{\ell}f_i\right)_w=\prod_{i=1}^{\ell}(f_i)_w, \  \ \  w\left(\prod_{i=1}^{\ell}f_i\right)=\sum_{i=1}^{\ell}w(f_i).\]

Let $\mathfrak{m}\subset k[z_1,\ldots, z_n]$ be the maximal ideal of the origin in $\mathbb{A}^n$. Let $f_1,\ldots, f_\ell$ be polynomials  over the field $k$ in  $z_1,\ldots, z_n$ that induce a basis for the $d$-jet space at the origin, i.e., $k[z_1,\ldots, z_n]/\mathfrak{m}^{d+1}$, where $d$ is a positive integer and $\ell=\dim_k k[z_1,\ldots, z_n]/\mathfrak{m}^{d+1}$. The $k$-linear map 
of $k[z_1,\ldots, z_n]/\mathfrak{m}^{d+1}$ induced by the coordinate change $z_1-g(z_2,\ldots, z_n)\mapsto z_1$ and $z_i\mapsto z_i$ for $i\geq 2$ is an automorphism. Therefore, the new polynomials $f_1(z_1+g(z_2,\ldots, z_n), z_2,\ldots, z_n),\ldots,
f_\ell(z_1+g(z_2,\ldots, z_n), z_2,\ldots, z_n)$ also induce a basis for the $d$-jet space.

\begin{proposition}\label{c_0}
Let $f$ be a polynomial over $\mathbb{A}^n$. Assign integral
weights $w(z_i)$ to the variables and let $w(f)$ be the weighted multiplicity of $f$.  Then
\begin{itemize}
\item $c_0(f_w)\leq c_0(f)\leq \frac{\sum w(z_i)}{w(f)}$.
\item  If  $$\left(\mathbb{A}^n, \frac{\sum w(z_i)}{w(f)}\cdot \left(f_w=0\right)\right)$$ is log canonical outside the origin, then 
$ c_0(f)=\frac{\sum w(z_i)}{w(f)}$.
\end{itemize}
\end{proposition}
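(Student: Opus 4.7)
The plan is to extract both bounds and the equality from the weighted blowup $\pi\colon Y\to\mathbb{A}^n$ determined by the weights $(w(z_1),\ldots,w(z_n))$, together with one lower-semicontinuity argument for the log canonical threshold. For the weighted blowup one has $K_Y=\pi^\ast K_{\mathbb{A}^n}+(\sum w(z_i)-1)E$, where $E$ is the unique exceptional divisor, and for any polynomial $h$ with weighted multiplicity $w(h)$ one has $\pi^\ast(h=0)=\widetilde{H}+w(h)E$. Consequently, the log discrepancy of $E$ in the pair $(\mathbb{A}^n,c(f=0))$ equals $\sum w(z_i)-c\cdot w(f)$, and imposing that this be non-negative immediately yields the right-hand inequality $c_0(f)\le\sum w(z_i)/w(f)$ in the first bullet.

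For the left-hand inequality $c_0(f_w)\le c_0(f)$, I would introduce the one-parameter deformation coming from the $\mathbb{G}_m$-action of weight $(w(z_1),\ldots,w(z_n))$, namely
\[
f_t(z):=t^{-w(f)}\,f\!\left(t^{w(z_1)}z_1,\ldots,t^{w(z_n)}z_n\right).
\]
Because every monomial appearing in $f$ has weight at least $w(f)$, this is a polynomial in $(t,z)$, and as $t\to 0$ it specialises to $f_w$. For each $t\neq 0$ the substitution $z_i\mapsto t^{w(z_i)}z_i$ is an automorphism of $\mathbb{A}^n$ fixing the origin, so $c_0(f_t)=c_0(f)$. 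Lower semicontinuity of log canonical thresholds in families then gives $c_0(f_w)\le\liminf_{t\to 0}c_0(f_t)=c_0(f)$, completing the first bullet.

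For the equality in the second bullet, set $c=\sum w(z_i)/w(f)$. The discrepancy computation rewrites as $K_Y+c\widetilde{D}+E=\pi^\ast(K_{\mathbb{A}^n}+c(f=0))$, so $(\mathbb{A}^n,c(f=0))$ is log canonical at $0$ if and only if $(Y,c\widetilde{D}+E)$ is log canonical along $E$. The exceptional divisor $E$ is the weighted projective space $\mathbb{P}(w(z_1),\ldots,w(z_n))$, and the crucial point is that the restriction $\widetilde{D}|_E$ is cut out by the weighted homogeneous leading term $f_w$. By inversion of adjunction, log canonicity of $(Y,c\widetilde{D}+E)$ near $E$ reduces to log canonicity of $(\mathbb{P}(w),c(f_w=0))$, which, by the free $\mathbb{G}_m$-action, is equivalent to log canonicity of $(\mathbb{A}^n\setminus\{0\},c(f_w=0))$; this is exactly the hypothesis. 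Combined with the upper bound from the first bullet, we obtain $c_0(f)=c$.

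The main technical obstacle is the last step, since $\mathbb{P}(w)$ is in general a singular (toric) variety and inversion of adjunction has to be applied in the form that accommodates the different along $E$ produced by the cyclic quotient singularities of $Y$. One must check that this contribution does not alter the equivalence with the hypothesis formulated on $\mathbb{A}^n\setminus\{0\}$; in the weighted-homogeneous setting this is a standard verification, but it is the only point where genuine care is required.
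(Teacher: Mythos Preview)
The paper does not supply its own argument; it simply cites \cite[Propositions~8.13 and~8.14]{K97}. Your sketch is correct and is essentially the proof found in that reference: the weighted blowup gives the discrepancy bound $c_0(f)\le\sum w(z_i)/w(f)$ and, combined with inversion of adjunction along $E\cong\mathbb{P}(w)$, the equality in the second bullet, while the $\mathbb{G}_m$-degeneration $f_t$ together with lower semicontinuity of the log canonical threshold yields $c_0(f_w)\le c_0(f)$.

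One small correction: the $\mathbb{G}_m$-action on $\mathbb{A}^n\setminus\{0\}$ is \emph{not} free in general (a point on a coordinate axis has stabilizer $\mu_{w(z_i)}$ whenever $w(z_i)>1$), so the equivalence between log canonicity of $(\mathbb{P}(w),c(f_w=0))$ and of $(\mathbb{A}^n\setminus\{0\},c(f_w=0))$ should instead be argued via the standard affine charts of $\mathbb{P}(w)$ as cyclic quotients, using that log canonicity is preserved under finite crepant quotients and their covers. This is exactly the care you already flagged regarding the different, and it does not affect the conclusion.
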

\begin{proof}
See \cite[Propositions~8.13 and 8.14]{K97}.
\end{proof}

\section{K-semistable del Pezzo surfaces I}\label{section:semi-I}

Fix a positive integer $m$ and put $\ell_m =h^0(\mathbb{P}^2, \mathcal{O}_{\mathbb{P}^2}(-mK_{\mathbb{P}^2}))$.  Let $[x:y:z]$ be a homogeneous coordinate for $\mathbb{P}^2$.  

Since the $\ell_m$ monomials of degree $3m$ in $x$, $y$, $z$ form a basis of the space $\mathrm{H}^0(\mathbb{P}^2, \mathcal{O}_{\mathbb{P}^2}(-mK_{\mathbb{P}^2}))$, the effective divisor defined by the equation $xyz=0$ is an anticanonical divisor of $m$-basis type. This shows that
\[\delta_m(\mathbb{P}^2)\leq 1\]
for each $m$. Therefore, $\delta(\mathbb{P}^2)\leq 1$.

\begin{theorem}\label{theorem:d=9}
The $\delta$-invariant of $\mathbb{P}^2$ is $1$.
\end{theorem}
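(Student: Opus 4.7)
Since $\delta_m(\mathbb{P}^2)\leq 1$ is already shown, my plan is to prove the pointwise bound $\delta_m(\mathbb{P}^2)\geq 1$ for every $m$: that every divisor $D=\frac{1}{m\ell_m}\sum_{i=1}^{\ell_m}D(s_i)$ of $m$-basis type is log canonical on $\mathbb{P}^2$. By $PGL_3$-transitivity I need only check $(\mathbb{P}^2,D)$ at $p=[0:0:1]$, and in the affine chart $\{z\ne 0\}$ with local coordinates $(x,y)$ this reduces, via $c_p(\mathbb{P}^2,D)=m\ell_m\cdot c_0(\prod_i s_i)$, to the estimate $c_0(\prod_i s_i)\geq 1/(m\ell_m)$.

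I would establish this by applying Proposition~\ref{c_0} to $\prod_i s_i$ with positive integer weights $(w_1,w_2)$. The central input is the \emph{adapted-basis estimate}: for any basis $\{s_i\}$ and any weights, the monomial basis $\{x^a y^b z^{3m-a-b}\}_{a+b\leq 3m}$ is adapted to the filtration of $H^0(\mathcal{O}(3m))$ by weighted multiplicity, so summation yields
\[
\sum_{i=1}^{\ell_m}v_w(s_i)\ \leq\ \sum_{a+b\leq 3m}(w_1 a+w_2 b)\ =\ (w_1+w_2)\,m\ell_m.
\]
Combined with Proposition~\ref{c_0}, this gives $c_0(\prod_i s_i)\geq(w_1+w_2)/w(\prod_i s_i)\geq 1/(m\ell_m)$, provided the scaled weighted leading form $\bigl(\mathbb{A}^2,\frac{w_1+w_2}{w(\prod_i s_i)}\mathrm{div}(\prod_i(s_i)_w)\bigr)$ is log canonical outside the origin. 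Since the weighted leading form factors into weighted lines --- the axis $\{x=0\}$ and the weighted parabolas $\{y=\alpha x^k\}$ for weights $(1,k)$ --- which meet pairwise only at the origin, this log canonical condition reduces to the coefficient bound that each weighted line carries at most $m\ell_m$ of the weighted degree of $\prod_i(s_i)_w$.

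For the monomial basis and weights $(1,1)$ this bound is attained with equality on both coordinate axes, realising $c_p(\mathbb{P}^2,D)=1$ as the boundary case. The main obstacle is that for a general basis the $(1,1)$-coefficient bound along a particular line $L$ can fail: the sum $\sum_i m_{iL}$ of tangent-cone multiplicities along $L$ is not a linear functional of the basis elements, so the adapted-basis argument does not apply to it directly, and perturbations of a single basis element (e.g.\ replacing $x^2$ by $x^2+y$) can push this sum above $m\ell_m$. My plan around this obstacle is \emph{adaptive}: for each basis and each line $L$ where the $(1,1)$-bound would fail, switch to weights $(1,k)$ with $k\geq 2$, after a linear change of $(x,y)$ placing $L$ on the $x$-axis. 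For these weights the offending tangent mass splits off onto weighted parabolas rather than onto a single line, and the coefficient bound on each weighted component becomes controlled by the adapted-basis estimate applied to the genuinely linear filtrations ``$x$-multiplicity of the $(1,k)$-weighted leading form'' and ``$(y-\alpha x^k)$-multiplicity of the $(1,k)$-weighted leading form''. Establishing that such a $k$ exists uniformly (bounded in terms of $m$) is the technical heart of the argument and completes the proof.
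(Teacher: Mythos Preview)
Your high-level strategy matches the paper's exactly: reduce to $c_0(\prod_i f_i)\geq 1/(m\ell_m)$, establish the adapted-basis estimate $w(\prod_i f_i)\leq (w_1+w_2)\,m\ell_m$ for all weights (this is precisely the paper's Claim~1, phrased there as ``the Newton polygon of $f$ contains the point $(m\ell_m,m\ell_m)$''), and then invoke Proposition~\ref{c_0}. You also correctly identify that the obstruction is controlling the multiplicity of each irreducible factor of the weighted leading form.

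There is, however, a genuine gap in your handling of that obstruction. Your assertion that the ``$(y-\alpha x^k)$-multiplicity of the $(1,k)$-weighted leading form'' defines a linear filtration is false: if $s_1,s_2$ have equal $(1,k)$-weight and $(s_1)_w+(s_2)_w=0$, then $(s_1+s_2)_w$ jumps to strictly higher weight and its $(y-\alpha x^k)$-multiplicity is completely uncontrolled by those of $s_1,s_2$. So $\{s:\mu_{y-\alpha x^k}(s_w)\geq j\}$ is not a linear subspace, and the adapted-basis inequality does not apply to this quantity. Nor does replacing it by the curve valuation $\mathrm{ord}_{t=0}\,s(t,\alpha t^k)$ help: the monomial basis is not adapted to that valuation (indeed $y-\alpha x^k$ has infinite value), and the associated divisorial valuation $\mathrm{ord}_{E'}$ only sees $w_k(f)+1$, not $w_k(f)+c$. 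Consequently the plan of fixing things by a single linear coordinate change and a choice of $k$ cannot be completed as written: if the $(1,k)$-leading form has $(y-\alpha x^k)^c$ with $c>m\ell_m$, resolving it requires the \emph{polynomial} change $y\mapsto y-\alpha x^k$, after which the same difficulty can recur one level deeper.

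The paper's proof supplies exactly the mechanism you are missing. It performs the polynomial change $x\mapsto x-A_1y^\beta$, observes that the transformed $f_i^{(1)}$ still induce a basis of the $3m$-jet space $k[x,y]/(x,y)^{3m+1}$ (so Claim~1 applies again to $f^{(1)}$), shows that the edge of the Newton polygon crossing the diagonal becomes strictly steeper by at least~$1$, and iterates. Termination is forced because the slope is bounded by $m\ell_m$. This iterative polynomial-coordinate-change argument with its termination bound is the ``technical heart'' you gesture at; your proposed substitute via linear filtrations on branch multiplicities does not work.
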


\begin{proof}

Let $\{s_1,\ldots, s_{\ell_m}\}$ be a basis of $\mathrm{H}^0(\mathbb{P}^2, \mathcal{O}_{\mathbb{P}^2}(-mK_{\mathbb{P}^2}))$. 
We denote the effective divisor of the section $s_i$ by $D_i$ and set
\[D:=\sum_{i=1}^{\ell_m}D_i.\]

It is enough to show that for an arbitrary point $p$ in $\mathbb{P}^2$  $$c_p(\mathbb{P}^2, D)\geq \frac{1}{m\ell_m}.$$

By a linear coordinate change, we may assume that $p=[0:0:1]$.

Consider the $\ell_m$ monomials of degree $3m$ in the variables $x, y, z$.  Putting $z=1$, we obtain 
 the $\ell_m$ monomials of degrees at most $3m$ $$1,x,y,x^2,xy,xy^2,\ldots, x^k,x^{k-1}y, \ldots, xy^{k-1},y^k,\ldots,  x^{3m},x^{3m-1}y, \ldots, xy^{3m-1},y^{3m},$$ where they are written in  lexicographic order. 
In this order, denote the $i$-th monomial by $\mathbf{x}_i$ for each $i=1,\ldots, \ell_m$.

 We may consider $x$ and~$y$ as local coordinates around the point $p$.
Then each $D_i$ is defined around the point $p$ by a polynomial $f_i$ of degree at most $3m$ in  the variables $x, y$.  The divisor $D$ is defined by $f:=\prod f_i$ in an affine neighborhood of $p$.

Since the sections $s_1,\ldots, s_{\ell_m}$ form a basis for $\mathrm{H}^0(\mathbb{P}^2, \mathcal{O}_{\mathbb{P}^2}(-mK_{\mathbb{P}^2}))$ and they induce the polynomials $f_1,\ldots, f_{\ell_m}$, respectively, we may assume that $f_i$ contains the monomial $\mathbf{x}_i$ for each $i=1,\ldots, \ell_m$.

Now we consider the Newton polygon of the polynomial $f$ in $\mathbb{R}^2$, where we use coordinate functions $(s,t)$ for $\mathbb{R}^2$.
\bigskip

\textbf{Claim 1.} The Newton polygon of the polynomial $f$ contains the point $(m\ell_m, m\ell_m)$ corresponding to the monomial~$x^{m\ell_m}y^{m\ell_m}$.
\medskip

Since each $f_i$ contains the monomial $\mathbf{x}_i$, we have $w(f_i)\leq w(\mathbf{x}_i)$ with respect to  given weights $w(x), w(y)$, and hence $w(f)\leq w(\prod \mathbf{x}_i)=w(x^{m\ell_m}y^{m\ell_m})$. This proves the claim.
\bigskip

\textbf{Claim 2.} If the line $s=t$ intersects the Newton polygon of $f$ at one of its vertices, then~$c_0(f)\geq \frac{1}{m\ell_m}$. 
\medskip

In such a case, we can take weights $w(x), w(y)$ such that $f_w$ is of the form $\epsilon x^a y^a$, where $\epsilon$ is a non-zero constant. Claim~1 implies that $a\leq m\ell_m$. We then obtain $$c_0(f)\geq c_0(f_w)=\frac{1}{a}\geq \frac{1}{m\ell_m}$$
from Proposition~\ref{c_0}.

\bigskip

We may therefore assume that any vertex of the Newton polygon of $f$ does not lie on the line~$s=t$.

\textbf{Step A.} Let $\Lambda$ be the edge of the Newton polygon of $f$ that intersects the line $s=t$. 
Let~$w(x), w(y)$ be the weights  such that   the monomials of $f_w$ are plotted  on the edge~$\Lambda$.
 
 If $\Lambda$ is either vertical ( $f_w=x^ag(y)$) or horizontal ($f_w=y^ah(x)$), where $g$ and $h$ are polynomials of multiplicity at most $a-1$ at the origin, then Claim~1 implies $a\leq m\ell_m$. It then follows from Proposition~\ref{c_0} that $c_0(f)\geq \frac{1}{m\ell_m}$.

Therefore, we may assume that $\Lambda$ is neither vertical nor horizontal. The slope of the edge  $\Lambda$ is equal to $-\frac{w(x)}{w(y)}$.

\bigskip

\textbf{Step B.}
We write an irreducible decomposition of $f_w$  as $$f_w=\epsilon x^ay^b(x^{\alpha_1}+g_1(x,y))^{c_1} \cdots(x^{\alpha_r}+g_r(x,y))^{c_r},$$ 
where $\epsilon$ is a non-zero constant and $g_i(x,y)$ is an irreducible  weighted homogeneous polynomial of degree $w(x^{\alpha_i})$ such that it does not contain the monomial $x^{\alpha_i}$.
Note that  $a,b\leq m\ell_m$. 

\bigskip

Let $c=\max\{c_i\}.$  We may assume that $c_1=c$.  For the convenience, we set $\alpha=\alpha_1$. Since~$g_1(x,y)$ is irreducible,  it must  possess  the monomial $y^\beta$ for some positive integer $\beta$.

\bigskip

\textbf{Claim 3.}
If $c\leq m\ell_m$, then $c_0(f)\geq \frac{1}{m\ell_m}.$

\bigskip
It immediately follows from Proposition~\ref{c_0} that 
$$c_0(f_w)=\mathrm{min}\left\{\frac{1}{a},\frac{1}{b},\frac{1}{c},\frac{w(x)+w(y)}{w(f_w)}\right\}.$$
By Claim~1,  $\frac{w(x)+w(y)}{w(f_w)}\geq \frac{1}{m\ell}$, so that $c_0(f_w)\geq \frac{1}{m\ell}$. Therefore,   $c_0(f)\geq c_0(f_w)\geq \frac{1}{m\ell}.$

\bigskip

\textbf{Step C.}  Suppose that  $c>m\ell_m$.      
Since $w((x^{\alpha}+y^{\beta})^c)\leq w(x^{m\ell_m}y^{m\ell_m})$ by Claim~1,  if $\alpha, \beta \geq 2$, 
then we  immediately obtain $c\leq m\ell_m$. 
Therefore, either $\alpha=1$ or $\beta=1$. By exchanging coordinates if necessary, we may assume that $\alpha=1$.
Note that $\alpha=1$ implies that $w(x)\geq w(y)$ and $\frac{w(x)}{w(y)}$ is the integral number $\beta$.

Therefore, 
$$f_w=\epsilon x^ay^b(x+A_1y^\beta)^{c} (x^{\alpha_2}+g_2(x,y))^{c_2} \cdots(x^{\alpha_r}+g_r(x,y))^{c_r},$$ 
where $A_1$ is a non-zero constant.
The weighted leading term $f_w$ contains the monomial $x^{(a+c+\sum_{i=2}^r \alpha_ic_i)}y^b$  and  
$a+c+\sum_{i=2}^r \alpha_ic_i\geq c>m\ell_m $. 

We now apply a change of coordinate $x+A_1y^\beta\mapsto x$ to the polynomials $f_i$ and  $f$.  Set
$$ f_i^{(1)}(x,y):=f_i(x-A_1y^\beta,y) $$ 
for each $i$ and 
$$ f^{(1)}(x,y):=f(x-A_1y^\beta,y). $$ 
Then $f^{(1)}_w(x,y)=f_w(x-A_1y^\beta,y)$ and $f^{(1)}=\prod  f^{(1)}_i$.

Since  $f_1^{(1)},\ldots, f_{\ell_m}^{(1)} $ form a basis for the $(3m)$-jet space $k[x,y]/ (x, y)^{3m+1}$, we may again assume that $f_i^{(1)}$ contains the monomial $\mathbf{x}_i$ for each $i$. The Newton polygon of the polynomial $f^{(1)}$  again contains the point corresponding to~$x^{m\ell_m}y^{m\ell_m}$.

Now we go back to Step~A with $f^{(1)}$ instead of $f$, i.e., let $\Lambda^{(1)}$ be the edge of the Newton polygon of $f^{(1)}$ that intersects the line $s=t$. We also find weights $w^{(1)}(x), w^{(1)}(y)$ with respect to which the monomials of $f^{(1)}_{w^{(1)}}$ are plotted  on the edge $\Lambda^{(1)}$.  
The slope of the  edge $\Lambda^{(1)}$ is~$-\frac{w^{(1)}(x)}{w^{(1)}(y)}$. 

Let $L$ be the line in $\mathbb{R}^2$ determined by the edge $\Lambda$. We observe that 
\begin{quote}there is no monomial in $f^{(1)}$ plotted under the line $L$ and  that there is no monomial in $f^{(1)}$ plotted on the line $L$  with $s<c$. \ \ \ (*)\end{quote}
But $f^{(1)}$ possesses the monomial $x^{a+c+\sum_{i=2}^r \alpha_ic_i}y^b$ that is plotted on $L$.
This shows that  $\Lambda^{(1)}$ is  strictly steeper than $\Lambda$,  i.e., $-\frac{w^{(1)}(x)}{w^{(1)}(y)}<-\frac{w(x)}{w(y)}$.

As before, we write 
$$f^{(1)}_{w^{(1)}}=\epsilon^{(1)} x^{a^{(1)}}y^{b^{(1)}}(x^{\alpha^{(1)}_1}+g_1^{(1)}(x,y))^{c_1^{(1)}} \cdots(x^{\alpha^{(1)}_{r^{(1)}}}+g_{r^{(1)}}^{(1)}(x,y))^{c^{(1)}_{r^{(1)}}},$$ 
where $\epsilon^{(1)}$ is a non-zero constant and $g_i^{(1)}(x,y)$ is an irreducible  weighted homogeneous polynomial of degree $w^{(1)}(x^{\alpha_i^{(1)}})$ such that it does not contain the monomial $x^{\alpha_i^{(1)}}$. 

 Let $c^{(1)}=\max\left\{c_i^{(1)}\right\}.$
 Again we assume that $c^{(1)}=c_1^{(1)}$. The irreducible polynomial $g_1^{(1)}$ must 
  contain  the monomial $y^{\beta^{(1)}}$ for some positive integer $\beta^{(1)}$.

If $c^{(1)}\leq m\ell_m$, then the proof  is done by Claim~3. If $c^{(1)}> m\ell_m$, then we follow Step~C.  We here remark that  $\alpha^{(1)}$ must be $1$ 
because 
$\frac{w^{(1)}(x)}{w^{(1)}(y)}>\frac{w(x)}{w(y)}\geq 1$.  Note $\frac{w^{(1)}(x)}{w^{(1)}(y)}$ is the integral number $\beta^{(1)}$ and 
$\frac{w^{(1)}(x)}{w^{(1)}(y)}-\frac{w(x)}{w(y)}=\beta^{(1)}-\beta\geq 1$. Now we go back to Step~A with the new coordinate-changed polynomials  $f_i^{(2)}$ and  $f^{(2)}$. 

For the proof, it is enough to show that this procedure terminates in a finite number of loops.
The slope of $\Lambda^{(i)}$ is bounded below by $-m\ell_m$ since the Newton polygon of  $f^{(i)}$ must contain the point corresponding to~$x^{m\ell_m}y^{m\ell_m}$ and it has the property (*) above. The termination is therefore guaranteed because the slope of $\Lambda^{(i)}$ drops by at least $1$ for each loop.
\end{proof}

\section{K-stable del Pezzo surfaces}\label{section:K-stable del Pezzo surfaces}

In this section, we prove the first statement of Main Theorem. Before we start, we should remark here that the estimations of the $\delta$-invariants  in this section are not sharp at all. Since we have only to check that the $\delta$-invariants are strictly bigger than~$1$, our estimations have been made in such a way that we can reduce the amount of computation as much as possible.

Let $S_d$ be a smooth del Pezzo surface of degree $d\leq 5$.  Let $m$ be a positive integer and
let $s_1,\ldots, s_{\ell_m}$ be  sections in $\mathrm{H}^0(S_d,\mathcal{O}_{S_d}(-mK_{S_d}))$ that form a basis for $\mathrm{H}^0(S_d,\mathcal{O}_{S_d}(-mK_{S_d}))$, where $\ell_m=\frac{dm(m+1)}{2}+1$. Denote the divisor of the section $s_i$ by $D^m_i$. Set
$$D^m:=\sum_{i=1}^{\ell_m}D^m_i.$$

Since $d\leq 5$, for a given point $p\in S_d$, we can choose mutually disjoint $(9-d)$ $(-1)$-curves $M_1,\ldots,M_{9-d}$ which do not pass through the point $p$. By contracting these $(-1)$-curves, we obtain a birational morphism $\pi : S_d\rightarrow \mathbb{P}^2$ such that it is an isomorphism in a neighborhood of~$p$.  Set $q=\pi(p)$. By a suitable coordinate change, we may assume that $q=[0:0:1]$. Note that $c_p(S_d, D^m_i)=c_q(\mathbb{P}^2, \pi(D^m_i))$ for $i=1,\ldots, \ell_m$  and $c_p(S_d, D^m)=c_q(\mathbb{P}^2, \pi(D^m)).$  Denote~$\pi(D^m_i)$ by $\bar{D}^m_i$ for each $i$ and $\pi(D^m)$ by $\bar{D}^m$.

For an effective divisor $C$ in $ |-mK_{S_d}|$,  $\pi(C)$ is an effective divisor of degree $3m$ on $\mathbb{P}^2$ which passes through the points   $\pi(M_1),\ldots,\pi(M_{9-d})$ with multiplicities at least  $m$.  Such divisors produce an $\ell_m$-dimensional subspace of $\mathrm{H}^0(\mathbb{P}^2, \mathcal{O}_{\mathbb{P}^2}(-mK_{\mathbb{P}^2}))$.  We denote this subspace by $\mathcal{L}_m$. The effective divisors $\bar{D}^m_1,\ldots, \bar{D}^m_{\ell_m}$ induce a basis for $\mathcal{L}_m$.

As before, let $[x:y:z]$ be a homogeneous coordinate for $\mathbb{P}^2$.  We may consider $x$ and~$y$ as local coordinates in the affine chart  $U$ by $z\ne 0$.  For a polynomial $g(x,y)=g_i(x,y)+g_{i+1}(x,y)+\ldots$, where $g_j(x,y)$ is a homogeneous polynomial of degree $j$, we will call the lowest degree term~$g_i(x,y)$ the Zariski tangent term of $g(x,y)$.
 In $U$, the divisor $\bar{D}^m_i$ is defined by a polynomial $f_{m,i}$. 
Note that
$$m\ell_m \cdot c_0\left(\prod_{i=1}^{\ell_m}f_{m,i}\right)=c_q\left(\mathbb{P}^2, \frac{1}{m\ell_m}\bar{D}^m\right)=c_p\left(S_d, \frac{1}{m\ell_m}D^m\right).$$

For each $0\leq n\leq 3m$, we define a subspace  of the $k$-vector space  of homogeneous polynomials of degree $n$ in the variables $x$ and $y$ as follows:
\[\mathcal{T}_{n,m}:=\left\{\ g(x,y)\in k[x,y]_n \ \left| \
 \aligned
& \mbox{there is an effective divisor $C$ in $|-mK_{S_d}|$ such that}\\
& \mbox{the divisor $\pi(C)$ is defined in $U$ by a polynomial}\\
& \mbox{whose Zariski tangent term is $g(x,y)$.}\\
\endaligned\right.\right\},\]
where  $k[x,y]_n$  denotes the space  of homogeneous polynomials of degree $n$ in $x, y$.
Note that the space $k[x,y]_n$ is spanned by the standard basis
$$\mathcal{S}_{n}=\left\{x^n,x^{n-1}y,\ldots,xy^{n-1},y^n \right\}.$$
Set
\[\mathcal{T}_m :=\bigoplus_{n=0}^{3m}\mathcal{T}_{n,m}.\]
A basis for $\mathcal{T}_m$ consisting only of homogeneous elements will be called a basis  for $\mathcal{T}_m$.

Put $\ell= \dim_k(\mathcal{T}_m)$.
Let $\{t_1,\ldots, t_{\ell} \}$ be a basis of $\mathcal{T}_m$. For each $1\leq j\leq \ell$ we  take a polynomial $h_{m,j}$ on $U$ originated from $\mathcal{L}_m$ such that its Zariski tangent term is $t_j$.  
It is obvious that $h_{m,1},\ldots, h_{m,\ell}$ are linearly independent, so that    $\ell\leq \ell_m$.  Let $G$ be a member in $\mathcal{L}_m$. It can be expressed on $U$ by  a polynomial  $g(x,y)=g_i(x,y)+g_{i+1}(x,y)+\ldots$, where $g_j(x,y)$ is a homogeneous polynomial of degree $j$. If $i=3m$, then $g(x,y)$ is a linear combination of $h_{m,1},\ldots, h_{m,\ell}$.
Assume that if $a+1\leq i\leq 3m$, then $g(x,y)$ is a linear combination of $h_{m,1},\ldots, h_{m,\ell}$. For $i=a$, we have constants $\alpha_1,\ldots, \alpha_\ell$ such that the Zariski tangent term of
$\sum \alpha_j h_j(x,y)$ is  $g_i(x,y)$. Since $g(x,y)-\sum\alpha_jh_{m,j}(x,y)$ is induced by some element in  $\mathcal{L}_m$, the polynomial $g(x,y)$ is a linear combination of $h_{m,1},\ldots, h_{m,\ell}$. Thus inductively we can conclude  that every polynomial on $U$ from $\mathcal{L}_m$ is a linear combination of $h_{m,1},\ldots, h_{m,\ell}$.
Consequently, the dimension of $\mathcal{T}_m$ is equal to the dimension of $\mathrm{H}^0(S_d,\mathcal{O}_{S_d}(-mK_{S_d}))$, i.e., $\ell=\ell_m$.

Now let us explain how to estimate the log canonical threshold 
$$c_0\left(\prod_{i=1}^{\ell_m}f_{m,i}\right).$$
\bigskip

\textbf{Step 0.}  Consider the space $\mathcal{L}_1$. Using a suitable coordinate change, we choose a basis~$\{t_1,\ldots, t_{d+1}\}$   for the space $\mathcal{T}_1$ with $\deg(t_1)\leq\ldots\leq \deg(t_{d+1})$ such that $t_1,\ldots, t_{d}$ are monomials.  Indeed, in each proof, we will immediately see that this is  possible whenever we need this step. 
Denote the monomial $t_i$ by $\mathbf{x}_{1,i}$ for $i=1,\ldots, d$. Choose an appropriate monomial from those in  $t_{d+1}$ and denote it by $\mathbf{x}_{1,d+1}$.
\medskip

\textbf{Step 1.} 
Consider the finite sets
\[\mathcal{C}_m:=\left\{ \prod_{i=1}^{d+1} \mathbf{x}_{1,i}^{n_i} \ \Big| \ \mbox{$n_i$ are non-negative integers with $n_1+\cdots+n_{d+1}=m$}\right\},\]
\[\mathcal{B}^s_m:=\left\{ \prod_{i=1}^{d+1} t_i^{n_i} \ \Big| \ \mbox{$n_i$ are non-negative integers with $n_1+\cdots+n_{d+1}=m$}\right\}.\]
The monomial $\prod_{i=1}^{d+1} \mathbf{x}_{1,i}^{n_i}$ is contained in the homogeneous polynomial $\prod_{i=1}^{d+1} t_i^{n_i}$.
Since the sets $\{t_1,\ldots, t_{d+1}\}$ and $\{\mathbf{x}_{1,1},\ldots, \mathbf{x}_{1,d+1}\}$ are linearly independent respectively,
the sets $\mathcal{C}_m$ and $\mathcal{B}^s_m$ are linearly independent respectively.

Denote the number of elements in $\mathcal{B}^s_m$ by $b$. Note that $b= \binom{m+d}{d}\leq \ell_m$. Set $a=\ell_m-b$.
\medskip

 \textbf{Step 2.} 
 We choose linearly independent $a$ homogeneous elements $u_1,\ldots, u_a$ in $\mathcal{T}_m$ such that
  the set $\{u_1,\ldots, u_a\}\cup\mathcal{B}^s_m$ forms a basis for $\mathcal{T}_m$.
 \begin{lemma}\label{lemma:injection1}
There is an injective map 
 \begin{equation}\label{index-map2}\iota_m: \{u_1,\ldots, u_a\}\cup\mathcal{B}^s_m\to \bigcup_{n=0}^{3m}\mathcal{S}_n\end{equation}
 such that
 \begin{enumerate}
  \item $\iota_m(\prod_{i=1}^{d+1} t_i^{n_i})=\prod_{i=1}^{d+1} \mathbf{x}_{1,i}^{n_i}$;
  \item $\iota_m(u_i)$ is contained in the homogeneous polynomial $u_i$ for each $i$.
  \end{enumerate}
 \end{lemma}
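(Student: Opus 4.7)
The plan is to apply Hall's marriage theorem after refining the choice of the $u_j$'s. Condition~(1) forces the values of $\iota_m$ on $\mathcal{B}^s_m$: send each $\prod_j t_j^{n_j}$ to $\prod_j \mathbf{x}_{1,j}^{n_j}$; this partial map is already injective because $\mathcal{C}_m$ consists of $b$ pairwise distinct monomials. The remaining task is to extend $\iota_m$ to $\{u_1,\ldots,u_a\}$ by exploiting the freedom in Step~2 to refine the choice of the $u_j$'s.

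The key preliminary step is to arrange that each $u_j$ has $\operatorname{supp}(u_j)\cap\mathcal{C}_m=\emptyset$. To achieve this I would show that the $k$-linear map
\[
\rho:\operatorname{span}(\mathcal{B}^s_m)\longrightarrow \operatorname{span}(\mathcal{C}_m)
\]
sending a polynomial to its $\mathcal{C}_m$-component is an isomorphism. Given this, for any initial $u_j$ there is a unique $\sum_i\lambda_{ji}v_i\in\operatorname{span}(\mathcal{B}^s_m)$ whose $\mathcal{C}_m$-component matches that of $u_j$, and replacing $u_j$ by $u_j-\sum_i\lambda_{ji}v_i$ preserves the basis property of $\{u_j\}\cup\mathcal{B}^s_m$ (only the coset modulo $\operatorname{span}(\mathcal{B}^s_m)$ is relevant) while killing every $\mathcal{C}_m$-term of $u_j$.

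I would prove the isomorphism via a weighted-degree argument. Choose positive integer weights $w(x),w(y)$ and pick $\mathbf{x}_{1,d+1}$ to be an extremal monomial of $t_{d+1}$ so that (a)~$\mathbf{x}_{1,1},\ldots,\mathbf{x}_{1,d+1}$ have pairwise distinct weights, (b)~$\mathbf{x}_{1,d+1}$ is the unique maximum-weight monomial of $t_{d+1}$, and (c)~the weighted sums $\sum_j n_j w(\mathbf{x}_{1,j})$ are pairwise distinct across tuples $(n_1,\ldots,n_{d+1})$ with $\sum n_j=m$; a super-increasing assignment such as $w(\mathbf{x}_{1,j})=N^j$ for $N$ sufficiently large secures all three conditions. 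Under such weights, $\mu_i=\prod_j\mathbf{x}_{1,j}^{n_j}$ is the unique maximum-weight monomial of $v_i=\prod_j t_j^{n_j}$; ordering the indices by $w(\mu_i)$, the $b\times b$ matrix whose $(i,j)$-entry is the coefficient of $\mu_j$ in $v_i$ becomes lower-triangular with nonzero diagonal, hence invertible.

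With $\operatorname{supp}(u_j)\subset\bigcup_{n=0}^{3m}\mathcal{S}_n\setminus\mathcal{C}_m$ for every~$j$, I would conclude by applying Hall's marriage theorem to the bipartite graph with left vertices $\{u_1,\ldots,u_a\}$, right vertices $\bigcup_{n=0}^{3m}\mathcal{S}_n\setminus\mathcal{C}_m$, and edges $u_j\sim\mu$ precisely when $\mu$ appears in $u_j$. For any $S\subset\{u_j\}$, the $|S|$ linearly independent polynomials in $S$ have combined support of cardinality at least $|S|$, so Hall's condition is satisfied and a saturating matching exists. Together with the prescribed values on $\mathcal{B}^s_m$ this produces the required injection $\iota_m$. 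The main obstacle is the isomorphism property of $\rho$; once it is established, the rest of the argument is routine linear algebra and bipartite matching.
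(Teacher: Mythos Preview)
Your overall strategy---kill the $\mathcal{C}_m$-components of the $u_j$'s and then apply Hall's marriage theorem---is sound and genuinely different from the paper's direct row-echelon argument. However, your proof that $\rho$ is an isomorphism has a real gap.

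First, the parenthetical ``a super-increasing assignment such as $w(\mathbf{x}_{1,j})=N^j$'' cannot work: the weight of $x^ay^b$ is $a\,w(x)+b\,w(y)$, so with only two parameters $w(x),w(y)$ you cannot freely prescribe $w(\mathbf{x}_{1,1}),\dots,w(\mathbf{x}_{1,d+1})$.

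Second, and more seriously, you are not at liberty to re-pick $\mathbf{x}_{1,d+1}$: it is fixed in Step~0, and that specific choice is what drives the shaded-region estimates in Step~3. In the cuspidal case of Theorem~\ref{theorem:d1} one has $t_2=(x+y)^2$ and $\mathbf{x}_{1,2}=xy$; but $xy$ is \emph{never} the unique maximum-weight monomial of $(x+y)^2$, since $w(xy)=w(x)+w(y)$ lies strictly between $2w(x)$ and $2w(y)$ whenever $w(x)\ne w(y)$. So no weight order makes $\mu_i=\prod\mathbf{x}_{1,j}^{n_j}$ extremal in $v_i=\prod t_j^{n_j}$ here, and your triangularity argument collapses.

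The fix is to use a monomial order that is not a weight order. For the cuspidal case the paper uses a graded order in which $x^\alpha y^\alpha$ precedes every other monomial of degree $2\alpha$; in the remaining cases graded lex suffices. With such an order, $\mu_i$ is the \emph{smallest} monomial in $v_i$, so the matrix $(\text{coeff.\ of }\mu_j\text{ in }v_i)$ is triangular with nonzero diagonal, and $\rho$ is an isomorphism. Substituting this for your weight-order argument, the rest of your proof (homogeneity is preserved because $\rho$ respects degree; Hall's condition holds because linearly independent polynomials have combined support of size at least their number) goes through.

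For comparison, the paper avoids both the modification of the $u_j$'s and Hall entirely: it row-reduces the matrix of the $f_{m,i}$ with respect to the same monomial order, notes that the pivot columns depend only on the row space $\mathcal{L}_m$, and observes that each $\mu_i\in\mathcal{C}_m$ is the leading monomial of an element of $\mathcal{L}_m$ (namely one with Zariski tangent term $v_i$), hence is a pivot. The Zariski tangent terms of the echelon rows whose pivot lies outside $\mathcal{C}_m$ then serve as the $u_j$'s, and their pivots give $\iota_m(u_j)$ directly. Your route is more indirect but rests on the same key device---the monomial order making $\mu_i$ the leading term of $v_i$---once the weight-order claim is repaired.
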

 \begin{proof}
See Appendix at the end.
\end{proof}
\medskip

 \textbf{Step 3.} Consider all the possibilities of  the image $\iota_m(\{u_1,\ldots, u_a\})$  with linearly independent~$a$ homogeneous elements $u_1,\ldots, u_a$ in $\mathcal{T}_m$ such that
  the set $\{u_1,\ldots, u_a\}\cup\mathcal{B}^s_m$ forms a basis for $\mathcal{T}_m$.

\begin{lemma}\label{lemma:injection2}
There is  an injective map 
\begin{equation}\label{index-map0}I_m :\{ f_{m, i}\ | \ 1\leq i\leq \ell_m\}\to \bigcup_{n=0}^{3m}\mathcal{S}_n\end{equation}
such that 
\begin{enumerate}
\item the monomial $I_m (f_{m,i})$ is contained in $f_{m,i}$;
\item its image  coincides with the image of $\iota_m$.
\end{enumerate}
\end{lemma}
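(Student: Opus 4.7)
I would prove Lemma~\ref{lemma:injection2} by reformulating it as a linear-algebra statement about a natural projection and then invoking the Leibniz expansion of a determinant. Put $\mathcal{M}:=\operatorname{im}(\iota_m)\subset\bigcup_{n}\mathcal{S}_n$ and consider the $k$-linear map
\[\pi:\mathcal{L}_m\longrightarrow k^{\mathcal{M}},\qquad f\longmapsto\bigl(\text{coefficient of }\mu\text{ in }f\bigr)_{\mu\in\mathcal{M}}.\]
Because $\dim\mathcal{L}_m=\ell_m=|\mathcal{M}|$, if $\pi$ is an isomorphism for a suitable admissible $\iota_m$, then the $\ell_m\times\ell_m$ matrix $A=(A_{i\mu})$ representing $\pi$ in the basis $\{f_{m,i}\}$ satisfies $\det A\neq 0$. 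Expanding $\det A=\sum_\sigma\operatorname{sgn}(\sigma)\prod_i A_{i,\sigma(i)}$ yields a bijection $\sigma\colon\{1,\dots,\ell_m\}\to\mathcal{M}$ with every factor $A_{i,\sigma(i)}$ nonzero, and setting $I_m(f_{m,i}):=\sigma(i)$ produces an injection in which $I_m(f_{m,i})$ is a monomial appearing in $f_{m,i}$ and $\operatorname{im}(I_m)=\mathcal{M}$, which is precisely the conclusion of the lemma.

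To make $\pi$ an isomorphism I would exploit the multiplicity filtration $V_n:=\{f\in\mathcal{L}_m:\mult_q(f)\geq n\}$ together with the identification $V_n/V_{n+1}\cong\mathcal{T}_{n,m}$ via Zariski tangent terms, established in the paragraph immediately preceding Lemma~\ref{lemma:injection1}. Splitting $\mathcal{M}=\bigsqcup_n\mathcal{M}_n$ with $\mathcal{M}_n:=\mathcal{M}\cap\mathcal{S}_n$ and choosing any basis of $\mathcal{L}_m$ compatible with the filtration, the matrix of $\pi$ becomes block upper triangular: the $(n,n')$-block vanishes for $n>n'$ because a polynomial of multiplicity at least $n$ at $q$ carries no monomial of degree less than $n$, and the $(n,n)$-diagonal block is exactly the induced graded map
\[\pi_n:\mathcal{T}_{n,m}\longrightarrow k^{\mathcal{M}_n}.\]
Thus $\det A$ equals $\prod_n\det(\pi_n)$ up to a nonzero scalar coming from the change of basis, and since $|\mathcal{M}_n|=\dim\mathcal{T}_{n,m}$, the map $\pi$ is an isomorphism if and only if every $\pi_n$ is injective.

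The lemma is therefore reduced to a pivot-selection problem in each graded piece: find $\mathcal{M}_n\subset\mathcal{S}_n$ of cardinality $\dim\mathcal{T}_{n,m}$ such that $\mathcal{T}_{n,m}$ projects isomorphically onto $k^{\mathcal{M}_n}$. Reduced row-echelon always produces such a set for an arbitrary subspace of $k^{\mathcal{S}_n}$; however, Lemma~\ref{lemma:injection1} forces $\mathcal{M}_n$ to contain the preassigned images $\bigl\{\prod_{i=1}^{d+1}\mathbf{x}_{1,i}^{n_i}:\sum n_i=m,\ \deg\bigl(\prod t_i^{n_i}\bigr)=n\bigr\}$ of the degree-$n$ elements of $\mathcal{B}^s_m$, with the remaining pivots realized as $\iota_m(u_j)$ using the freedom of Step~2 to choose the supplementary basis $\{u_1,\dots,u_a\}$. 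The main obstacle is to show that this forced portion of $\mathcal{M}_n$ is already pivotal, equivalently, that the evaluation functionals $g\mapsto(\text{coefficient of }\prod\mathbf{x}_{1,i}^{n_i}\text{ in }g)$ are linearly independent on $\mathcal{T}_{n,m}$ as $\prod t_i^{n_i}$ ranges over degree-$n$ elements of $\mathcal{B}^s_m$. Once this linear independence is in hand, the exchange lemma completes $\mathcal{M}_n$ to a full pivotal set by an appropriate choice of the $u_j$'s and the proof terminates; this linear-independence verification is the technical core that I expect to be carried out in the appendix together with the proof of Lemma~\ref{lemma:injection1}.
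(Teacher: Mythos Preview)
Your plan is correct and is essentially the same argument as the paper's appendix, just packaged more abstractly. The paper writes the $f_{m,i}$ as rows of a matrix $M_F$ with columns indexed by monomials in a carefully chosen graded order, passes to a row-echelon form $E$, takes $\mathcal{M}$ to be the set of pivot columns, and then observes that the $\ell_m\times\ell_m$ minor $\widetilde{M}_F$ on those columns is nonsingular, whence a nonvanishing permutation term furnishes $I_m$---exactly your Leibniz step. Your block-upper-triangular decomposition via the multiplicity filtration $V_n$ is precisely what the graded part of the paper's monomial order accomplishes, and your graded maps $\pi_n$ are the diagonal blocks of the paper's echelon computation.

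The one place where the paper is sharper than your outline is the step you defer: showing that the preassigned monomials $\prod\mathbf{x}_{1,i}^{n_i}\in\mathcal{C}_m$ are automatically pivotal. The paper handles this not by a separate linear-independence verification but by the choice of monomial order itself: the order is arranged so that $\prod\mathbf{x}_{1,i}^{n_i}$ is the \emph{smallest} monomial occurring in $\prod t_i^{n_i}$, hence when the echelon algorithm processes the polynomials $\prod t_i^{n_i}$ (which lie in $\mathcal{T}_m$ via the filtration), each such monomial is forced to appear as a pivot. This single device replaces the ``exchange lemma'' step you anticipate and makes the proofs of Lemmas~\ref{lemma:injection1} and~\ref{lemma:injection2} simultaneous rather than sequential.
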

\begin{proof}
See Appendix at the end.
\end{proof}

 \medskip
 
Lemma~\ref{lemma:injection2} implies that the Newton polygon of the polynomial $\prod_{i=1}^{\ell_m} f_{m,i}$ contains the point corresponding to the monomial
\begin{equation}\label{product} x^{\alpha}y^{\beta}:=\prod_{i=1}^{\ell_m}I_m(f_{m,i})=\prod_{i=1}^a\iota_m(u_i)\cdot \prod_{\mathbf{x}\in\mathcal{C}_m}\mathbf{x}.\end{equation}
Varying the possible image $\iota_m(\{u_1,\ldots, u_a\})$, we find an attainable maximum $v_m$ of the value $\max\{\alpha, \beta\}$. Then we see that the Newton polygon of $\prod_{i=1}^{\ell_m} f_{m,i}$ always contains the point corresponding to the monomial $x^{v_m}y^{v_m}$.
\medskip

\textbf{Step 4.} With the monomial $x^{v_m}y^{v_m}$, we can follow Steps~A, B, C in the proof of Theorem~\ref{theorem:d=9}.
Then we obtain $$c_0(\prod_{i=1}^{\ell_m} f_{m, i})\geq \frac{1}{v_m}.$$ 
This implies $$m\ell_m \cdot c_0\left(\prod_{i=1}^{\ell_m}f_{m,i}\right)=c_q\left(\mathbb{P}^2, \frac{1}{m\ell_m}\bar{D}^m\right)=c_p\left(S_d, \frac{1}{m\ell_m}D^m\right)\geq \frac{m\ell_m}{v_m}$$
for an arbitrary point $p$ on $S_d$.
Therefore,
$$\delta (S_d)\geq \limsup_m \frac{m\ell_m}{v_m}.$$
\medskip

In Step~C we define the polynomials $f_i^{(1)}$ by applying a change of coordinate $x+A_1y^\beta\mapsto x$ to the polynomial $f_i$ for each $i$. We furthermore define $f_i^{(k)}$ inductively for each $i$ and each $k$. We here apply the same to $f_{m,i}$  and denote the coordinate-changed polynomials by $f_{m,i}^{(k)}$. 
In  the change of coordinate $x+A_1y^\beta\mapsto x$, we may assume that~$\beta>1$. Indeed, if $\beta=1$, then we replace $f_{m, i}(x,y)$ by $f_{m,i}(x-A_1y, x)$. Then they also form a basis for $\mathcal{L}_m$ and the log canonical threshold of their product at the origin is equal to that of the product of the original $f_{m, i}$. Note that the edge of the Newton polygon of the product of the replaced $f_{m, i}$ that intersects the line $s=t$ is not parallel to the line $s=-t$. This implies that $\beta$ for the replaced $f_{m, i}$ cannot be $1$ in Step~C. Therefore, it is strictly bigger than $1$ and hence $\beta^{(k)}$ for $f^{(k)}_m:=\prod_{i=1}^{\ell_m}f_{m,i}^{(k)}$ is also strictly bigger than $1$ for all $k$.
The lemma below guarantees that we are able to go through  Step~C in Step~4.

\begin{lemma}\label{lemma:injection-after-coordinate-change}
For each $k$ there is  an injective map 
\[I_{m,k}:\{ f_{m, i}^{(k)}\ | \ 1\leq i\leq \ell_m\}\to \bigcup_{n=0}^{3m}\mathcal{S}_n\]
such that 
\begin{enumerate}
\item the monomial $I_{m,k} (f_{m,i}^{(k)})$ is contained in $f_{m,i}^{(k)}$;
\item its image  coincides with the image of $\iota_m$.
\end{enumerate}
\end{lemma}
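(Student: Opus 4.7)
The plan is to reduce Lemma~\ref{lemma:injection-after-coordinate-change} to Lemma~\ref{lemma:injection2} by showing that each coordinate change $x \mapsto x - A y^{\beta}$ used in Step~C preserves the Zariski tangent term of every polynomial it is applied to. Once this is in hand, the recipe producing $I_m$ from $\iota_m$ in the proof of Lemma~\ref{lemma:injection2} --- which only reads off Zariski-tangent-term data --- reproduces, verbatim, an injection $I_{m,k}$ from $\{f^{(k)}_{m,i}\}$ into monomials with the same image as $\iota_m$.

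The first step I would carry out is the preservation statement: if $f \in k[x,y]$ has order $n$ at the origin with Zariski tangent term $g \in k[x,y]_n$, and if $\beta \geq 2$, $A \in k$, then $f(x - A y^{\beta}, y)$ also has order $n$ and Zariski tangent term $g$. The verification is a short Taylor expansion --- every correction introduced by substituting $x - A y^{\beta}$ for $x$ is divisible by $y^{\beta}$, so it raises the total degree by at least $\beta - 1 \geq 1$ above the term it came from, and the lowest-degree part of $f(x - Ay^{\beta},y)$ remains $g$. The paragraph immediately preceding the lemma already arranges that every $\beta$ occurring in Step~C satisfies $\beta > 1$ (the replacement $f_{m,i}(x,y) \mapsto f_{m,i}(x - A_1 y, x)$ disposes of the $\beta = 1$ case, after which each subsequent $\beta^{(j)}$ is strictly larger than $1$), so the hypothesis $\beta \geq 2$ is in force at every iteration.

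Next I would iterate the preservation statement across the $k$ successive coordinate changes to conclude that the Zariski tangent term of $f^{(k)}_{m,i}$ equals that of $f_{m,i}$ for every $i$ and every $k$. Consequently, the collection $\{f^{(k)}_{m,i}\}$ carries exactly the same Zariski-tangent-term data as $\{f_{m,i}\}$: the Zariski tangent terms still span $\mathcal{T}_m$, and each monomial in the image of $\iota_m$ arises as the bottom-degree contribution of precisely the same linear combinations. Because the construction of $I_m$ in the appendix depends only on this combinatorics (it amounts to linear-algebraic manipulations performed on the Zariski tangent terms to hit each target monomial in $\iota_m(\{u_1,\ldots,u_a\}\cup\mathcal{B}^s_m)$), the same procedure, applied to $\{f^{(k)}_{m,i}\}$, manufactures the desired injection $I_{m,k}$.

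The main obstacle --- and the only non-routine step --- is the preservation of the Zariski tangent term; it is the unique place where the hypothesis $\beta \geq 2$ is genuinely used, and the conclusion of the lemma would break down at $\beta = 1$, since the substitution would then mix terms within the lowest-degree homogeneous component and destroy the identification of tangent-term data between $f_{m,i}$ and $f^{(k)}_{m,i}$. Once preservation is secured, everything else is a mechanical transfer of the linear-algebraic argument of Lemma~\ref{lemma:injection2} from $\{f_{m,i}\}$ to $\{f^{(k)}_{m,i}\}$, yielding $I_{m,k}$ with the required properties.
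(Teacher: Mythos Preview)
Your proposal is correct and follows essentially the same approach as the paper: both arguments hinge on the observation that, since $\beta>1$, the substitution $x\mapsto x-Ay^{\beta}$ leaves every Zariski tangent term unchanged, so the echelon-form pivots (and hence the image of $\iota_m$) are unaffected, and the nonsingularity argument from Lemma~\ref{lemma:injection2} carries over verbatim to produce $I_{m,k}$. The paper makes the transfer slightly more explicit by writing $f^{(1)}_{m,i}=\sum_j T_{ij}\,h_{m,j}(x-A_1y^{\beta},y)$ and noting that the pivot columns of the echelon rows $h_{m,j}$ are preserved, but your formulation captures the same content.
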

\begin{proof}
See Appendix at the end.
\end{proof}


Note that in some cases, we may skip Steps~0 and~1, i.e., we may start from Step~2 with~$a=\ell_m$.
\smallskip

As we see, the $\delta$-invariant is defined in an asymptotic way. Therefore, it is enough to see the asymptotic behaviors of $m\ell_m$ and $v_m$ with respect to $m$. 
Since $m\ell_m =\frac{d}{2}m^3+\frac{d}{2}m^2+m$, a lower bound of the $\delta$-invariant of $S_d$ can be determined by 
the coefficient of the term $m^3$  in $v_m$. For reader's convenience, we here provide the following diagrams. In each diagram, 
the exponent of $x$ in the product of all the monomials in $x$, $y$ corresponding to the integral points of a triangle (or a quadrilateral) is given by a cubic polynomial with respect to $m$. The number in the triangle (or the quadrilateral) is the coefficient of the term $m^3$ in this cubic polynomial. The diagrams in Figure~\ref{table} enable us to immediately figure out  the coefficients of the term $m^3$  in the exponents $v_m$  that appear in the present and the next sections.

\medskip
\begin{figure}[H]
\caption{}\label{table}
\begin{minipage}[m]{.4\linewidth}
\begin{center}
\begin{tikzpicture}[scale=0.77, line cap=round,line join=round,>=triangle 45,x=0.2cm,y=0.2cm]
\clip(-5.,-35.) rectangle (38.,8.);
\draw [->] (0.,-30.) -- (0.,5.);
\draw [->] (0.,-30.) -- (35.,-30.);
\draw (0.,0.)-- (30.,-30.);
\draw (9.8,-30) node[anchor=north west] {$m$};
\draw (-4,-18) node[anchor=north west] {$m$};
\draw (19.6,-30) node[anchor=north west] {$2m$};
\draw (-5,-8) node[anchor=north west] {$2m$};
\draw (29.7,-30) node[anchor=north west] {$3m$};
\draw (-5,2) node[anchor=north west] {$3m$};
\draw (35,-29) node[anchor=north west] {$x$};
\draw (-1.3,8) node[anchor=north west] {$y$};
\draw (0.,-20.)-- (20.,-20.);
\draw (20.,-30.)-- (20.,-20.);
\draw (10.,-30.)-- (10.,-20.);
\draw (10.,-10.)-- (10.,-20.);
\draw (0.,-10.)-- (10.,-10.);
\draw (0.,-30.)-- (10.,-20.);
\draw (10.,-30.)-- (0.,-20.);
\draw (20.,-20.)-- (10.,-30.);
\draw (20.,-30.)-- (0.,-10.);
\draw (10.,-10.)-- (0.,-20.);
\draw (10.,-10.)-- (20.,-10.);
\draw (20.,-10.)-- (20.,-20.);
\draw (3.0,-19.5) node[anchor=north west] {$\frac{3}{24}$};
\draw (-0.5,-23) node[anchor=north west] {$\frac{1}{24}$};
\draw (3.0,-25.5) node[anchor=north west] {$\frac{3}{24}$};
\draw (6.1,-23) node[anchor=north west] {$\frac{5}{24}$};
\draw (9.5,-23) node[anchor=north west] {$\frac{7}{24}$};
\draw (16.3,-23) node[anchor=north west] {$\frac{11}{24}$};
\draw (13.1,-19.5) node[anchor=north west] {$\frac{9}{24}$};
\draw (13.1,-25.5) node[anchor=north west] {$\frac{9}{24}$};
\draw (-0.5,-13) node[anchor=north west] {$\frac{1}{24}$};
\draw (3.0,-15.5) node[anchor=north west] {$\frac{3}{24}$};
\draw (3.0,-9.5) node[anchor=north west] {$\frac{3}{24}$};
\draw (6.1,-13) node[anchor=north west] {$\frac{5}{24}$};
\draw (1.4,-5) node[anchor=north west] {$\frac{4}{24}$};
\draw (11.5,-15) node[anchor=north west] {$\frac{16}{24}$};
\draw (20.5,-25) node[anchor=north west] {$\frac{28}{24}$};
\draw (15,-10.5) node[anchor=north west] {$\frac{20}{24}$};
\end{tikzpicture}
\end{center}
\end{minipage}
\hfill
\begin{minipage}[m]{.4\linewidth}
\begin{center}
\begin{tikzpicture}[scale=0.77, line cap=round,line join=round,>=triangle 45,x=0.2cm,y=0.2cm]
\clip(-5.,-35.) rectangle (38.,8.);
\draw [->] (0.,-30.) -- (0.,5.);
\draw [->] (0.,-30.) -- (35.,-30.);
\draw (0.,0.)-- (30.,-30.);
\draw (-4,-18) node[anchor=north west] {$m$};
\draw (19.6,-30) node[anchor=north west] {$2m$};
\draw (-5,-8) node[anchor=north west] {$2m$};
\draw (29.7,-30) node[anchor=north west] {$3m$};
\draw (-5,2) node[anchor=north west] {$3m$};
\draw (35,-29) node[anchor=north west] {$x$};
\draw (-1.3,8) node[anchor=north west] {$y$};
\draw (24,-30) node[anchor=north west] {$\frac{5}{2}m$};
\draw [color=sqsqsq] (25.,-30.)-- (20.,-30.);
\draw [color=sqsqsq] (20.,-30.)-- (12.398655527744687,-22.603272535972017);
\draw [color=sqsqsq] (12.398655527744687,-22.603272535972017)-- (12.482897438943981,-17.548757864014473);
\draw [color=sqsqsq] (12.482897438943981,-17.548757864014473)-- (25.,-30.);
\draw [dash pattern=on 1pt off 1pt] (12.398655527744687,-22.603272535972025)-- (12.482897438943981,-30.);
\draw (12,-30) node[anchor=north west] {$\frac{5}{4}m$};
\draw (16.5,-24) node[anchor=north west] {$1$};
\end{tikzpicture}
\end{center}

\end{minipage}

\begin{minipage}[m]{.4\linewidth}
\begin{center}
\begin{tikzpicture}[scale=0.77, line cap=round,line join=round,>=triangle 45,x=0.2cm,y=0.2cm]
\clip(-5.,-35.) rectangle (38.,8.);
\draw [->] (0.,-30.) -- (0.,5.);
\draw [->] (0.,-30.) -- (35.,-30.);
\draw (0.,0.)-- (30.,-30.);
\draw (9.8,-30) node[anchor=north west] {$m$};
\draw (-4,-18) node[anchor=north west] {$m$};
\draw (14,-30) node[anchor=north west] {$\frac{3}{2}m$};
\draw (19.6,-30) node[anchor=north west] {$2m$};
\draw (-5,-8) node[anchor=north west] {$2m$};
\draw (29.7,-30) node[anchor=north west] {$3m$};
\draw (-5,2) node[anchor=north west] {$3m$};
\draw (35,-29) node[anchor=north west] {$x$};
\draw (-1.3,8) node[anchor=north west] {$y$};
\draw (10.,-10.)-- (10.,-30.);
\draw (10.,-30.)-- (20.,-30.);
\draw (20.,-30.)-- (10.,-10.);
\draw (15.,-30.)-- (10.,-20.);
\draw (10,-25) node[anchor=north west] {$\frac{7}{24}$};
\draw (12.486142017771918,-22.174404624411963) node[anchor=north west] {$\frac{25}{24}$};
\end{tikzpicture}

\end{center}
\end{minipage}
\hfill
\begin{minipage}[m]{.4\linewidth}
\begin{center}
\begin{tikzpicture}[scale=0.77, line cap=round,line join=round,>=triangle 45,x=0.2cm,y=0.2cm]

\clip(-5.,-35.) rectangle (38.,8.);
\draw [->] (0.,-30.) -- (0.,5.);
\draw [->] (0.,-30.) -- (35.,-30.);
\draw (0.,0.)-- (30.,-30.);
\draw (9.8,-30) node[anchor=north west] {$m$};
\draw (-4,-18) node[anchor=north west] {$m$};
\draw (-5,-13) node[anchor=north west] {$\frac{3}{2}m$};
\draw (19.6,-30) node[anchor=north west] {$2m$};
\draw (-5,-8) node[anchor=north west] {$2m$};
\draw (29.7,-30) node[anchor=north west] {$3m$};
\draw (-5,2) node[anchor=north west] {$3m$};
\draw (35,-29) node[anchor=north west] {$x$};
\draw (-1.3,8) node[anchor=north west] {$y$};
\draw (0.,-20.)-- (20.,-20.);
\draw (0.,-10.)-- (0.,-20.);
\draw (0.,-10.)-- (20.,-20.);
\draw (0.,-15.)-- (10.,-20.);
\draw (0.3,-15.7) node[anchor=north west] {$\frac{2}{24}$};
\draw (5,-13.7) node[anchor=north west] {$\frac{14}{24}$};
\draw (35,-29) node[anchor=north west] {$x$};
\draw (-1.3,8) node[anchor=north west] {$y$};

\end{tikzpicture}

\end{center}
\end{minipage}

\end{figure}
\medskip


Now we are ready to estimate the values of the $\delta$-invariants of smooth del Pezzo surfaces.

We keep the same notations as before.

 \begin{theorem}\label{theorem:d1}
Let $S$ be a del Pezzo surface of degree $1$. Then $\delta(S)\geq \frac{3}{2}$.
\end{theorem}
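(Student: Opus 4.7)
The plan is to apply the Steps~0--4 framework introduced earlier in this section. For a point $p\in S_1$ and a birational morphism $\pi\colon S_1\to\mathbb{P}^2$ contracting eight disjoint $(-1)$-curves missing $p$, I would choose a basis $\{t_1,t_2\}$ of $\mathcal{T}_1$ in Step~0 depending on the type of $p$: at a general point of $S_1$ one takes $t_1=1$ and $t_2$ a linear form after a suitable coordinate change; at the base point of $|-K_{S_1}|$ both $t_i$ are linear forms after a linear change of coordinates. Steps~1--3 then produce the injective map $I_m\colon\{f_{m,i}\}\to\bigcup_n\mathcal{S}_n$ supplied by Lemmas~\ref{lemma:injection1} and~\ref{lemma:injection2}.

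The crucial identity is that, for the resulting Newton polygon point $(\alpha,\beta)=\prod_i I_m(f_{m,i})$, one has
\[
\alpha+\beta\;=\;\sum_{n=0}^{3m}n\,\dim_k\mathcal{T}_{n,m}\;=\;\sum_{n\geq 1}h^0\bigl(S_1,\,-mK_{S_1}\otimes\mathfrak{m}_p^n\bigr),
\]
the first equality counting degrees of image monomials, and the second being Abel summation applied to the multiplicity filtration $a_n=h^0(S_1,-mK_{S_1}\otimes\mathfrak{m}_p^n)$. Since this sum is an invariant of $\mathcal{T}_m$ (independent of the particular $\iota_m$), it controls $v_m$ from above via $v_m\leq\max\{\alpha,\beta\}\leq\alpha+\beta$.

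The main estimate I would prove is that the right-hand side is at most $\tfrac{m^3}{3}+O(m^2)$, uniformly in $p$. Letting $\pi_p\colon\tilde{S}_1\to S_1$ denote the blow-up at $p$ with exceptional divisor $E$, one has $a_n=h^0(\tilde{S}_1,\pi_p^*(-mK_{S_1})-nE)$, and since $(-K_{S_1})^2=1$, the self-intersection $(\pi_p^*(-mK_{S_1})-nE)^2=m^2-n^2$ yields $h^0\leq\tfrac{m^2-n^2}{2}+O(m)$ for $0\leq n\leq m$ by Riemann--Roch together with Kawamata--Viehweg vanishing in the big-and-nef regime. The cohomology vanishes once $n$ exceeds $m$ by a bounded amount because the pseudo-effective threshold $\tau(p)=\sup\{t:\pi_p^*(-K_{S_1})-tE\in\overline{\mathrm{Eff}}\}$ equals $1$ at every smooth point of $S_1$ (a Seshadri-type bound using the existence of an anticanonical divisor through $p$). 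Summing, $\sum_{n=0}^m(m^2-n^2)/2=\tfrac{m^3}{3}+O(m^2)$.

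Combining the bound $v_m\leq\tfrac{m^3}{3}+O(m^2)$ with Step~4, which inherits the Newton polygon analysis from the proof of Theorem~\ref{theorem:d=9}, yields
\[
\delta(S_1)\;\geq\;\limsup_m\frac{m\ell_m}{v_m}\;\geq\;\lim_m\frac{m^3/2+O(m^2)}{m^3/3+O(m^2)}\;=\;\frac{3}{2}.
\]
The principal obstacle is verifying the cohomology estimate uniformly across $S_1$: at ``bad'' points such as the base point of $|-K_{S_1}|$, where $\tilde{S}_1$ acquires the structure of a rational elliptic surface and Kawamata--Viehweg vanishing fails, one computes $\sum_n a_n$ directly via the elliptic fibration, obtaining $\tfrac{m^3+5m}{3}$, whose leading coefficient is still $\tfrac{1}{3}$, so the conclusion is unaffected.
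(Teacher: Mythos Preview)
Your proposal has a genuine gap at the points the paper calls Case~2: those $p\in S_1$ at which the (unique) anticanonical curve $C$ through $p$ is singular at $p$. Your claim that the pseudo\-effective threshold $\tau(p)=1$ at every point of $S_1$ is false there: the divisor $mC$ lies in $|-mK_{S_1}|$ and has multiplicity $2m$ at $p$, so $\tau(p)=2$. (The existence of an anticanonical curve through $p$ gives only the lower bound $\tau(p)\geq 1$; your ``Seshadri-type'' argument for the upper bound breaks exactly when $C$ is a component of the test divisor.) These Case~2 points are \emph{not} the base point of $|-K_{S_1}|$---the base point is a Case~1 point, since the generic member of the pencil is smooth there---so your separate treatment of the base point does not address the difficulty.

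At a Case~2 point one computes $\sum_{n\geq 1}a_n=\tfrac{5}{12}m^3+O(m^2)$, not $\tfrac{1}{3}m^3$. Indeed, the paper shows the image of $\iota_m$ is contained in the ``kite'' with vertices $(0,0),(m/2,0),(m,m),(0,m/2)$; since this kite has $\ell_m+O(m)$ lattice points, the degree distribution $\dim\mathcal{T}_{j,m}$ must agree with the kite to leading order, and summing $j$ over the kite gives $\tfrac{5}{12}m^3$. Plugging this into your crude bound $v_m\leq\alpha+\beta$ yields only $\delta(S_1)\geq (m^3/2)/(5m^3/12)=6/5$, strictly weaker than the stated $3/2$. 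The paper obtains $3/2$ by instead bounding $\max\{\alpha,\beta\}$ directly: the kite is contained in the triangle $(0,0),(m,0),(m,m)$, whose $x$-exponent sum is $m^3/3+O(m^2)$, so $v_m\leq m^3/3+O(m^2)$. Your invariant $\alpha+\beta$ is too coarse to see this, because it discards the information that the kite is symmetric in $x$ and $y$.
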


\begin{proof} 
Let $C$ be a cubic curve in $\mathbb{P}^2$ that  passes through the points $\pi(M_1),\ldots,\pi(M_{8})$ and $q$. Since $\pi(M_1),\ldots,\pi(M_{8})$ are in general position, the curve $C$ must be irreducible and reduced.

\medskip
\emph{Case 1.  The curve $C$ is smooth at $q$.}
\medskip

The effective divisor $G$ produced by a section in $\mathcal{L}_m$ can pass through the point $q$ with multiplicity at most $m$. To see this, we write $G=m'C+\Omega$, where $m'$ is a non-negative integer not bigger than $m$ and $\Omega$ is an effective divisor whose support does not contain $C$. If $\mathrm{mult}_q(G)>m$, then $\mathrm{mult}_q(\Omega)>m-m'$. This yields an absurd inequality 
\[9(m-m')=C\cdot\Omega\geq \mathrm{mult}_q(\Omega)+\sum_{i=1}^8\mathrm{mult}_{\pi(M_i)}(C)\cdot\mathrm{mult}_{\pi(M_i)}(\Omega)>9(m-m').\]
 Therefore, the image of the injective map in \eqref{index-map2} is always contained in the set $$\{x^{n_1}y^{n_2} ~|~ 0\leq n_1+n_2\leq m \}.$$
 These monomials are plotted in  the shade area of the following diagram:
 \medskip
 
 \begin{center} 
 \begin{tikzpicture}[scale=0.55, line cap=round,line join=round,>=triangle 45,x=0.2cm,y=0.2cm]
\draw [color=cqcqcq,, xstep=1.0cm,ystep=1.0cm] (-5.,-35.) grid (38.,8.);
\clip(-5.,-35.) rectangle (38.,8.);
\fill[color=sqsqsq,fill=sqsqsq,fill opacity=0.1] (0.,-30.) -- (0.,-20.) -- (10.,-30.) -- cycle;
\draw [->] (0.,-30.) -- (0.,5.);
\draw [->] (0.,-30.) -- (35.,-30.);
\draw (0.,0.)-- (30.,-30.);
\draw (8,-30) node[anchor=north west] {$m$};
\draw (-5,-18) node[anchor=north west] {$m$};
\draw (17.5,-30) node[anchor=north west] {$2m$};
\draw (-6.3,-8) node[anchor=north west] {$2m$};
\draw (27,-30) node[anchor=north west] {$3m$};
\draw (-6.3,2) node[anchor=north west] {$3m$};
\draw (35,-28) node[anchor=north west] {$x$};
\draw (-2,9) node[anchor=north west] {$y$};
\draw [color=sqsqsq] (0.,-30.)-- (0.,-20.);
\draw [color=sqsqsq] (0.,-20.)-- (10.,-30.);
\draw [color=sqsqsq] (10.,-30.)-- (0.,-30.);
\end{tikzpicture}
\end{center}
 \medskip
 Since $m\ell_m=\frac{1}{2}m(m^2+m+2)$, Figure~\ref{table} shows
 $$\limsup_m \frac{m\ell_m}{v_m}=3.$$

\medskip
\emph{Case 2.  The curve $C$ is singular at $q$.}
\medskip

Then $C$ is a unique curve in $\mathbb{P}^2$ that  passes through the points $\pi(M_1),\ldots,\pi(M_{8})$ and $q$. Note that $\dim_k\mathcal{T}_1=2.$  Using  a suitable coordinate change,
if $C$ has a node at $q$, then we may  assume that $$t_1=1, \ t_2=xy,$$ and  if $C$ has a cusp at $q$, then we may  assume that $$t_1=1, \ t_2=(x+y)^2.$$
Therefore, in both the cases we can always take $$\mathbf{x}_{1,1}=1, \ \mathbf{x}_{1,2}=xy.$$ 
We then obtain 
$$\mathcal{C}_m=\{1,xy,x^2y^2,\ldots,x^my^m\}. $$  
Let $G$ be the effective divisor of a section in $\mathcal{L}_m$.  We write $G=m'C+\Omega$ as in Case~1.
Since $9(m-m')=C\cdot\Omega\geq 8(m-m')+2\mult_q(\Omega)$, we obtain $$\mult_q(G)=m'\cdot \mult_q(C)+\mult_q(\Omega)\leq 2m'+\frac{m-m'}{2}.$$
This shows that we can define the injective map $\iota_m$ in \eqref{index-map2}  in such a way that its image is contained in the set
\[\bigcup_{i=0}^{m}\left\{x^{i+n_1}y^{i+n_2}~|~0\leq n_1, n_2 \mbox{ and } 0\leq n_1+n_2\leq \frac{m-i}{2} \right\}.\]
 These monomials are plotted in  the shade area of the following diagram:

 \medskip
\begin{center}

\begin{tikzpicture}[scale=0.55, line cap=round,line join=round,>=triangle 45,x=0.2cm,y=0.2cm]
\draw [color=cqcqcq,, xstep=1.0cm,ystep=1.0cm] (-5.,-35.) grid (38.,8.);
\clip(-5.,-35.) rectangle (38.,8.);
\fill[color=sqsqsq,fill=sqsqsq,fill opacity=0.1] (0.,-30.) -- (10.,-20.) -- (5.,-30.) -- cycle;
\fill[color=sqsqsq,fill=sqsqsq,fill opacity=0.1] (0.,-30.) -- (10.,-20.) -- (0.,-25.) -- cycle;
\draw [->] (0.,-30.) -- (0.,5.);
\draw [->] (0.,-30.) -- (35.,-30.);
\draw (0.,0.)-- (30.,-30.);
\draw (8,-30) node[anchor=north west] {$m$};
\draw (-5,-18) node[anchor=north west] {$m$};
\draw (17.5,-30) node[anchor=north west] {$2m$};
\draw (-6.3,-8) node[anchor=north west] {$2m$};
\draw (27,-30) node[anchor=north west] {$3m$};
\draw (-6.3,2) node[anchor=north west] {$3m$};
\draw (35,-28) node[anchor=north west] {$x$};
\draw (-2,9) node[anchor=north west] {$y$};
\draw [color=sqsqsq] (0.,-25.)-- (10.,-20.);
\draw [color=sqsqsq] (10.,-20.)-- (5.,-30.);
\draw [color=sqsqsq] (5.,-30.)-- (0.,-30.);
\draw [color=sqsqsq] (0.,-25.)-- (0.,-30.);
\end{tikzpicture}

\end{center}
 \medskip
The exponent of $x$ in the product of all the monomials corresponding to the integral points of the shade area in the diagram above
is clearly smaller than the exponent from the shade area in the  below.
 \medskip

\begin{center}

\begin{tikzpicture}[scale=0.55, line cap=round,line join=round,>=triangle 45,x=0.2cm,y=0.2cm]
\draw [color=cqcqcq,, xstep=1.0cm,ystep=1.0cm] (-5.,-35.) grid (38.,8.);
\clip(-5.,-35.) rectangle (38.,8.);
\fill[color=sqsqsq,fill=sqsqsq,fill opacity=0.1] (0.,-30.) -- (10.,-20.) -- (10.,-30.) -- cycle;
\draw [->] (0.,-30.) -- (0.,5.);
\draw [->] (0.,-30.) -- (35.,-30.);
\draw (0.,0.)-- (30.,-30.);
\draw (8,-30) node[anchor=north west] {$m$};
\draw (-5,-18) node[anchor=north west] {$m$};
\draw (17.5,-30) node[anchor=north west] {$2m$};
\draw (-6.3,-8) node[anchor=north west] {$2m$};
\draw (27,-30) node[anchor=north west] {$3m$};
\draw (-6.3,2) node[anchor=north west] {$3m$};
\draw (35,-28) node[anchor=north west] {$x$};
\draw (-2,9) node[anchor=north west] {$y$};
\draw [color=sqsqsq] (0.,-30.)-- (10.,-20.);
\draw [color=sqsqsq] (10.,-20.)-- (10.,-30.);
\draw [color=sqsqsq] (10.,-30.)-- (0.,-30.);
\end{tikzpicture}

\end{center}
 \medskip
Therefore, Figure~\ref{table} shows 
$$\limsup_m \frac{m\ell_m}{v_m}\geq \frac{3}{2}.$$

Consequently,  from Case~1~and Case~2 we obtain $\delta(S)\geq \frac{3}{2}.$ \end{proof}

 \begin{theorem}\label{theorem:d2}
Let $S$ be a del Pezzo surface of degree $2$. Then $\delta(S)\geq \frac{6}{5}$.
\end{theorem}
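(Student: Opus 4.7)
The plan is to execute Steps~0--4 of Section~\ref{section:K-stable del Pezzo surfaces} in the case $d=2$, in close analogy with the proof of Theorem~\ref{theorem:d1}. Fix $p\in S$ and contract seven mutually disjoint $(-1)$-curves $M_1,\ldots,M_7$ not passing through $p$, getting $\pi\colon S\to\mathbb{P}^2$ and $q=\pi(p)$. The linear system $\mathcal{L}_1$ of plane cubics through $\pi(M_1),\ldots,\pi(M_7)$ has $\ell_1=3$. For $q$ in general position the subsystem vanishing at $q$ is $2$-dimensional and no cubic in it is singular at $q$ (three conditions on a three-dimensional system), so $\dim\mathcal{T}_{0,1}=1$, $\dim\mathcal{T}_{1,1}=2$, and $\mathcal{T}_{n,1}=0$ for $n\geq 2$. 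After a linear coordinate change I may take $t_1=1$, $t_2=x$, $t_3=y$, giving $\mathbf{x}_{1,1}=1$, $\mathbf{x}_{1,2}=x$, $\mathbf{x}_{1,3}=y$, $\mathcal{C}_m=\{x^iy^j : i+j\leq m\}$ with $b=\binom{m+2}{2}$, and $a=\ell_m-b=\binom{m}{2}$.

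Next I would bound the degrees of the complementary basis vectors $u_1,\ldots,u_a$, equivalently $\mult_p(D)$ for $D\in|-mK_S|$. A smooth anticanonical curve $C$ through $p$ exists generically because cubics through the seven points $\pi(M_i)$ and $q$ form a $2$-dimensional linear system with smooth general member at those eight points. Decomposing $D=kC+\Omega$ with $C\not\subset\operatorname{Supp}(\Omega)$ and using $\Omega\cdot C=2(m-k)$ gives $\mult_p(D)\leq k+2(m-k)\leq 2m$, so $\deg u_i\leq 2m$ and $\iota_m(u_i)\in \bigcup_{n=m+1}^{2m}\mathcal{S}_n$. In generic position the Riemann--Roch count gives $\dim\mathcal{T}_{n,m}=n+1$ whenever $\binom{n+2}{2}\leq \ell_m$; in that range $\mathcal{T}_{n,m}=k[x,y]_n$, so the $u_i$'s may be taken to be ordinary monomials, distributed so that $\iota_m(\{u_1,\ldots,u_a\})$ is symmetric under the involution $(p,q)\leftrightarrow(q,p)$.

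With such a symmetric choice the exponents $\alpha,\beta$ in $\prod_{i=1}^{\ell_m} I_m(f_{m,i})=x^\alpha y^\beta$ are equal, so $v_m=\alpha=\tfrac{1}{2}(\alpha+\beta)$. Since $\sum_{i+j\leq m}(i+j)=\tfrac{m(m+1)(m+2)}{3}$ and the crude bound $\sum_k\deg u_k\leq 2m\cdot\binom{m}{2}=m^3-m^2$ yields
\[
v_m\leq \tfrac{1}{2}\!\left(\tfrac{m(m+1)(m+2)}{3}+m^3-m^2\right)=\tfrac{2}{3}m^3+O(m^2)\leq \tfrac{5}{6}m^3+O(m^2),
\]
Step~4 delivers $\delta(S)\geq \limsup_m m\ell_m/v_m\geq \tfrac{6}{5}$. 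Lemma~\ref{lemma:injection-after-coordinate-change} ensures that the coordinate-change procedure of Step~C carries through as needed.

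The principal obstacle I anticipate is the treatment of the special positions of $q$ --- those $q$ lying on one of the $56$ exceptional configurations of $S_2$ (lines through two of the $\pi(M_i)$, conics through five of them, or cuspidal cubics through all seven with a multiplicity-two point). At such a $q$ the basis of $\mathcal{T}_1$ is no longer $\{1,x,y\}$ and must be chosen as in Case~2 of Theorem~\ref{theorem:d1} --- for instance $\{1,xy,\textrm{linear form}\}$ or $\{1,(x+y)^2,\textrm{linear form}\}$ depending on the singularity of the relevant special curve at $q$. These degenerate configurations cause $\mathcal{C}_m$ to concentrate in a more lopsided region, and the symmetric balancing argument must be reworked; the target $\delta(S)\geq 6/5$ is achieved by a finite enumeration that parallels Cases~1--2 of Theorem~\ref{theorem:d1}, in which the special cases furnish the critical bound indicated by Figure~\ref{table}.
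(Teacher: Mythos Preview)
Your argument misreads Step~3: $v_m$ is the \emph{maximum} of $\max\{\alpha,\beta\}$ over all images of $\iota_m$ consistent with the known constraints on $\mathcal{T}_m$, not a value computed for one favourable image. In Case~1 (basis $t_1=1,\ t_2=x,\ t_3=y$) the only constraint established is that every element of $\mathcal{T}_m$ has degree at most $2m$, so the complementary monomials $\iota_m(u_1),\ldots,\iota_m(u_a)$ may lie anywhere in $\bigcup_{n=m+1}^{2m}\mathcal{S}_n$. The worst case is the most \emph{asymmetric} one --- all $\iota_m(u_i)$ in the triangle $\{x^{n_1}y^{n_2}:n_1>m,\ n_1+n_2\le 2m\}$ --- which gives $v_m\sim\tfrac{5}{6}m^3$ and hence $\limsup m\ell_m/v_m=\tfrac{6}{5}$. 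Your ``symmetric'' computation bounds $\tfrac12(\alpha+\beta)$, not $\max\{\alpha,\beta\}$; taken at face value it yields $v_m\le\tfrac{2}{3}m^3$ and $\delta(S)\ge\tfrac{3}{2}$, which you then weaken to $\tfrac{5}{6}m^3$ without any argument. You cannot force the image of $\iota_m$ to be symmetric: it is determined by $\mathcal{T}_m$ (via the pivot columns in the Appendix), and the method must produce a bound valid for \emph{every} $p$ with $\deg t_2=\deg t_3=1$, where nothing prevents $\mathcal{T}_m$ from being lopsided in $x$.

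You also have the case structure inverted. The critical bound $\tfrac{6}{5}$ comes precisely from Case~1 above; the special Case~2 (when some cubic through $\pi(M_1),\ldots,\pi(M_7)$ is singular at $q$, so that $\deg t_3=2$) yields the \emph{better} estimate $\limsup m\ell_m/v_m=2$, after splitting on whether that cubic is irreducible or a line plus a conic and bounding $\mult_q(\Omega)$ via intersection with its components. Your list of ``56 exceptional configurations'' and proposed bases such as $\{1,xy,\text{linear form}\}$ do not match this dichotomy, and the assertion that ``the special cases furnish the critical bound'' is the opposite of what actually occurs.
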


\begin{proof} 
Note that $\dim_k(\mathcal{T}_1)=3$.  We have two possibilities: either $\deg (t_1)=0$, $\deg (t_2)=\deg (t_3)=1$
or $\deg (t_1)=0$, $\deg (t_2)=1$, $\deg (t_3)=2$.
 
\medskip
\emph{Case 1.  $\deg (t_1)=0$, $\deg (t_2)=\deg (t_3)=1$.}
\medskip

In this case, we may assume that $$t_1=1, \ t_2=x,\ t_3=y$$ by a suitable coordinate change. Therefore, $\mathbf{x}_{1,1}=1$, $\mathbf{x}_{1,2}=x$  and 
$\mathbf{x}_{1,3}=y$.
We then obtain $$\mathcal{C}_m=\{1,x,y,x^2,xy,y^2,\ldots,x^m,x^{m-1}y,\ldots, y^m\}. $$  

We claim that the effective divisor $G$ yielded by a section in $\mathcal{L}_m$  cannot pass through  $q$ with multiplicity more than $2m$. For the claim, we consider the effective  plurianticanonical divisor $$\tilde{G}=\pi^*(G)-m\sum_{i=1}^7 M_i$$ on $S$. 
Since $2m=H\cdot \tilde{G}\geq \mathrm{mult}_p(\tilde{G})$ for a general member $H$ in $|-K_S|$ passing through the point $p$, we obtain $\mathrm{mult}_p(\tilde{G})\leq 2m$. This implies the claim.

Therefore, the image of $\{u_1,\ldots, u_a\}$ under the injective map $\iota_m$ in \eqref{index-map2} is always contained in
$$\bigcup_{n=0}^{2m}\mathcal{S}_n\setminus\mathcal{C}_m,$$
where $a=\ell_m-\frac{(m+1)(m+2)}{2}=\frac{m(m-1)}{2}$.
In case when 
\[\iota_m (\{u_1,\ldots, u_a\})=\{x^{n_1}y^{n_2}~|~0\leq n_1, n_2, m+2\leq n_1+n_2\leq 2m  \text{ and } m+2\leq n_1 \leq 2m \}\]
the value $\alpha$ in the monomial $x^{\alpha}y^{\beta}$ of \eqref{product} attains the possible maximum $v_m$. 
Therefore, the value $v_m$ can be asymptotically evaluated by the shade area in the following diagram:

\begin{center}
\begin{tikzpicture}[scale=0.55, line cap=round,line join=round,>=triangle 45,x=0.2cm,y=0.2cm]
\draw [color=cqcqcq,, xstep=1.0cm,ystep=1.0cm] (-5.,-35.) grid (38.,8.);
\clip(-5.,-35.) rectangle (38.,8.);
\fill[color=sqsqsq,fill=sqsqsq,fill opacity=0.1] (0.,-20.) -- (0.,-30.) -- (10.,-30.) -- cycle;
\fill[color=sqsqsq,fill=sqsqsq,fill opacity=0.1] (10.,-30.) -- (10.,-20.) -- (20.,-30.) -- cycle;
\draw [->] (0.,-30.) -- (0.,5.);
\draw [->] (0.,-30.) -- (35.,-30.);
\draw (0.,0.)-- (30.,-30.);
\draw (8,-30) node[anchor=north west] {$m$};
\draw (-5,-18) node[anchor=north west] {$m$};
\draw (17.5,-30) node[anchor=north west] {$2m$};
\draw (-6.3,-8) node[anchor=north west] {$2m$};
\draw (27,-30) node[anchor=north west] {$3m$};
\draw (-6.3,2) node[anchor=north west] {$3m$};
\draw (35,-28) node[anchor=north west] {$x$};
\draw (-2,9) node[anchor=north west] {$y$};
\draw [color=sqsqsq] (0.,-20.)-- (0.,-30.);
\draw [color=sqsqsq] (0.,-30.)-- (10.,-30.);
\draw [color=sqsqsq] (10.,-30.)-- (0.,-20.);
\draw [color=sqsqsq] (10.,-30.)-- (10.,-20.);
\draw [color=sqsqsq] (10.,-20.)-- (20.,-30.);
\draw [color=sqsqsq] (20.,-30.)-- (10.,-30.);
\end{tikzpicture}

\end{center}
i.e., Figure~\ref{table} shows 
$$\limsup_m \frac{m\ell_m}{v_m}= \frac{6}{5}.$$

\medskip
\emph{Case 2.   $\deg (t_1)=0$, $\deg (t_2)=1$, $\deg (t_3)=2$.}
\medskip

In this case, there is a unique cubic  $C$  on $\mathbb{P}^2$ that passes through the points $\pi(M_1),\ldots,\pi(M_{7})$ and that has  multiplicity $2$ at the point~$q$.

Let $G$ be the effective divisor of a section in $\mathcal{L}_m$.

\medskip
\emph{Subcase 1. The curve $C$ is irreducible.}
\medskip

In this subcase, we may  assume that the Zariski tangent term of the defining polynomial of~$C$ on $U$ contains the monomial $xy$.

  We write $G=m'C+\Omega$, where $m'$ is a non-negative integer not bigger than $m$ and $\Omega$ is an effective divisor whose support does not contain $C$. Since
$$9(m-m')=C\cdot\Omega\geq 7(m-m')+2\mult_q(\Omega),$$
 the injective map $\iota_m$ in \eqref{index-map2} can be defined in a way that its image is  contained in the set
\[\bigcup_{i=0}^{m}\left\{x^{i+n_1}y^{i+n_2}~|~0\leq n_1, n_2 \mbox{ and } 0\leq n_1+n_2\leq m-i \right\}.\]
This set can be depicted as follows:

 \medskip

\begin{center}
\begin{tikzpicture}[scale=0.55, line cap=round,line join=round,>=triangle 45,x=0.2cm,y=0.2cm]
\draw [color=cqcqcq,, xstep=1.0cm,ystep=1.0cm] (-5.,-35.) grid (38.,8.);
\clip(-5.,-35.) rectangle (38.,8.);
\fill[color=sqsqsq,fill=sqsqsq,fill opacity=0.1] (0.,-20.) -- (10.,-20.) -- (10.,-30.) -- (0.,-30.) -- cycle;
\draw [->] (0.,-30.) -- (0.,5.);
\draw [->] (0.,-30.) -- (35.,-30.);
\draw (0.,0.)-- (30.,-30.);
\draw (8,-30) node[anchor=north west] {$m$};
\draw (-5,-18) node[anchor=north west] {$m$};
\draw (17.5,-30) node[anchor=north west] {$2m$};
\draw (-6.3,-8) node[anchor=north west] {$2m$};
\draw (27,-30) node[anchor=north west] {$3m$};
\draw (-6.3,2) node[anchor=north west] {$3m$};
\draw (35,-28) node[anchor=north west] {$x$};
\draw (-2,9) node[anchor=north west] {$y$};
\draw [color=sqsqsq] (0.,-20.)-- (10.,-20.);
\draw [color=sqsqsq] (10.,-20.)-- (10.,-30.);
\draw [color=sqsqsq] (10.,-30.)-- (0.,-30.);
\draw [color=sqsqsq] (0.,-30.)-- (0.,-20.);
\end{tikzpicture}

\end{center}
 \medskip
Figure~\ref{table} shows 
$$\limsup_m \frac{m\ell_m}{v_m}= 2.$$

\medskip
\emph{Subcase 2. The curve $C$ is reducible. }
\medskip

The cubic $C$ consists of a line $L$ and an irreducible conic $Q$. Since $(-K_S)^2=2$, it cannot consist of three lines. Note that $L$ passes through exactly two points of $\pi(M_1),\ldots,\pi(M_{7})$ and that  $Q$ passes through five of them. 
We may  assume that the Zariski tangent term of the defining polynomial of~$L$ on $U$ contains the monomial $x$ and that of $Q$ contains the monomial $y$.

We write $G=m_1L+m_2Q+\Omega$, where $m_1$ and $m_2$ are non-negative integers and $\Omega$ is an effective divisor whose support contains neither $L$ nor $Q$.

From the inequalities $$3m=L\cdot(m_1L+m_2Q+\Omega)\geq m_1+2m_2+2(m-m_1)+\mult_q\Omega,$$ 
$$6m=Q\cdot(m_1L+m_2Q+\Omega)\geq 2m_1+4m_2+5(m-m_2)+\mult_q\Omega,$$
we obtain $-m_1+2m_2\leq m$ and $2m_1-m_2\leq m$. These imply  $m_1, m_2\leq m$.
Moreover, we obtain 
\[\mult_q (\Omega)\leq m+\min\{-2m_1+m_2, m_1-2m_2\}.\] 
Therefore, we can define the injective map $\iota_m$ of \eqref{index-map2} in a way that its image is  contained in the sets either
\[\bigcup_{0\leq m_1\leq m_2\leq m}\left\{x^{m_1+n_1}y^{m_2+n_2}~|~0\leq n_1, n_2 \mbox{ and } 0\leq n_1+n_2\leq m+m_1-2m_2 \right\}
\]
or
\[\bigcup_{0\leq m_2\leq m_1\leq m}\left\{x^{m_1+n_1}y^{m_2+n_2}~|~0\leq n_1, n_2 \mbox{ and } 0\leq n_1+n_2\leq m-2m_1+m_2 \right\}
.\] 
Both the sets sit in the shade area of the following diagram:
\medskip

\begin{center}
\begin{tikzpicture}[scale=0.55, line cap=round,line join=round,>=triangle 45,x=0.2cm,y=0.2cm]
\draw [color=cqcqcq,, xstep=1.0cm,ystep=1.0cm] (-5.,-35.) grid (38.,8.);
\clip(-5.,-35.) rectangle (38.,8.);
\fill[color=sqsqsq,fill=sqsqsq,fill opacity=0.1] (0.,-20.) -- (10.,-20.) -- (10.,-30.) -- (0.,-30.) -- cycle;
\draw [->] (0.,-30.) -- (0.,5.);
\draw [->] (0.,-30.) -- (35.,-30.);
\draw (0.,0.)-- (30.,-30.);
\draw (8,-30) node[anchor=north west] {$m$};
\draw (-5,-18) node[anchor=north west] {$m$};
\draw (17.5,-30) node[anchor=north west] {$2m$};
\draw (-6.3,-8) node[anchor=north west] {$2m$};
\draw (27,-30) node[anchor=north west] {$3m$};
\draw (-6.3,2) node[anchor=north west] {$3m$};
\draw (35,-28) node[anchor=north west] {$x$};
\draw (-2,9) node[anchor=north west] {$y$};
\draw [color=sqsqsq] (0.,-20.)-- (10.,-20.);
\draw [color=sqsqsq] (10.,-20.)-- (10.,-30.);
\draw [color=sqsqsq] (10.,-30.)-- (0.,-30.);
\draw [color=sqsqsq] (0.,-30.)-- (0.,-20.);
\end{tikzpicture}

\end{center}
 \medskip
Therefore, 
$$\limsup_m \frac{m\ell_m}{v_m}= 2.$$

Consequently, from Cases~1 and 2 we obtain $\delta(S)\geq \frac{6}{5}$.
\end{proof}

 \begin{theorem}\label{theorem:d3}
Let $S$ be a del Pezzo surface of degree $3$. Then $\delta(S)\geq \frac{36}{31}$.
\end{theorem}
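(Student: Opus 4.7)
The plan is to follow the framework established in Section~\ref{section:K-stable del Pezzo surfaces}. For $d = 3$ we have $\dim_k(\mathcal{T}_1) = \ell_1 = 4$ and $m\ell_m = \frac{3}{2}m^3 + \frac{3}{2}m^2 + m$, so it suffices to show that in every case the leading $m^3$ coefficient of $v_m$ is at most $\frac{31}{24}$, which then yields
\[\delta(S) \geq \limsup_m \frac{m\ell_m}{v_m} \geq \frac{3/2}{31/24} = \frac{36}{31}.\]

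First I would enumerate the possible degree distributions $(\deg t_1, \deg t_2, \deg t_3, \deg t_4)$ for a basis of $\mathcal{T}_1$ chosen as in Step~0. Since imposing multiplicity $\geq 2$ at $q$ is three linear conditions on the four-dimensional $\mathcal{L}_1$, the generic case is $(0,1,1,2)$; special positions of $q$ produce $(0,1,2,2)$, arising when a pencil of cubics in $\mathcal{L}_1$ has multiplicity $\geq 2$ at $q$, and $(0,1,1,3)$ or $(0,1,2,3)$, arising when some cubic in $\mathcal{L}_1$ has multiplicity $3$ at $q$, i.e., decomposes as three lines through $q$ containing the six base points $\pi(M_1),\ldots,\pi(M_6)$ in pairs.

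For each case I would proceed in direct parallel with Cases~1 and~2 of Theorem~\ref{theorem:d2}: identify the distinguished cubic(s) $C$ in $\mathcal{L}_1$ realizing the prescribed multiplicity at $q$, decompose an arbitrary divisor $G \in \mathcal{L}_m$ as $G = \sum_j m_j C_j + \Omega$, and use the intersection numbers $C_j \cdot G$ computed on $\mathbb{P}^2$, after subtracting the contributions at the six base points, to bound $\mathrm{mult}_q(\Omega)$ and hence $\mathrm{mult}_q(G)$. The bounds translate into a region in $\mathbb{R}^2$ containing the image of the map $\iota_m$ from Lemma~\ref{lemma:injection1}, and the leading $m^3$ coefficient of $v_m$ in each case is then read off from the sub-triangles in Figure~\ref{table}.

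The main obstacle will be the reducible possibilities in the special cases. In $(0,1,2,2)$ the distinguished cubic with multiplicity $2$ at $q$ may split as a line through $q$ plus a smooth conic (possibly tangent to that line at $q$), or as two lines through $q$ plus a line not through $q$; in $(0,1,2,3)$ and $(0,1,1,3)$ one must also track how the six base points distribute among the three lines through $q$. Each of these subcases produces its own system of intersection constraints, analogous to Subcase~2 of Case~2 in Theorem~\ref{theorem:d2} but with more components and hence more possibilities. The delicate part is verifying uniformly that every subcase yields $v_m \leq \frac{31}{24}m^3 + O(m^2)$; since our bound $\frac{36}{31}$ is strictly greater than $1$, the estimates need not be sharp, which will allow some of the subcases to be dealt with by a common domination argument analogous to the one used in Case~2 of Theorem~\ref{theorem:d1}.
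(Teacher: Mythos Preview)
Your core strategy---decompose $G$ against the distinguished singular cubic and its components, bound $\mult_q(\Omega)$ via intersection numbers, and read off the $m^3$ coefficient from Figure~\ref{table}---is exactly the paper's method. The paper, however, does not organize the argument by the degree distribution of $\mathcal{T}_1$; for degree~$3$ it skips Steps~0 and~1 entirely and works directly with the unique cubic $C$ through $\pi(M_1),\ldots,\pi(M_6)$ that is singular at $q$, splitting into cases according to the \emph{reducibility type} of $C$ (irreducible; line plus conic; three lines with $\mult_q C=2$; three lines with $\mult_q C=3$).

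Your taxonomy contains a genuine classification error. The singular cubic $C$ is always \emph{unique}: if $C_1\neq C_2$ had no common component, then $C_1\cdot C_2=9$ while the intersection scheme would have length at least $6+2\cdot 2=10$ at the six base points and $q$; a short case check on possible shared components (a line or a conic) likewise forces $C_1=C_2$. Consequently your cases $(0,1,2,2)$ and $(0,1,2,3)$, which posit a pencil of cubics with $\mult_q\geq 2$, are vacuous. More importantly, the reducible possibilities you list under $(0,1,2,2)$---line plus conic, or two lines through $q$ plus a third line---actually occur \emph{within} the generic distribution $(0,1,1,2)$; only the case of three lines concurrent at $q$ gives $(0,1,1,3)$. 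Since the line-plus-conic subcase is precisely the one that produces the extremal value $\tfrac{36}{31}$ (the paper's Subcase~2.1), your plan as written risks treating the generic case $(0,1,1,2)$ as automatically irreducible and thereby missing the sharp bound. If you reorganize by the structure of the unique $C$ rather than by $\dim\mathcal{T}_{n,1}$, the argument goes through and coincides with the paper's.
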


\begin{proof}
Let $C$ be the unique cubic on $\mathbb{P}^2$ that passes through the points $\pi(M_1),\ldots,\pi(M_{6})$ and that is singular at the point~$q$. Let $G$ be the effective divisor of a section in $\mathcal{L}_m$.

\medskip
\emph{Case 1. The curve $C$ is irreducible.}
\medskip

In this case, the Zariski tangent term of the defining polynomial of $C$ on $U$ may be assumed to contain the monomial $xy$.

 We write $G=m'C+\Omega$, where $m'$ is a non-negative integer not bigger than $m$ and $\Omega$ is an effective divisor whose support does not contain $C$. Since
$$9(m-m')=C\cdot\Omega\geq 6(m-m')+2\mult_q(\Omega),$$
we have
\[\mult_q(\Omega)\leq \frac{3}{2}(m-m').\]
This implies that  the injective map $\iota_m$ in \eqref{index-map2} can be chosen in such a way that its image  is contained in the set
\[\bigcup_{i=0}^{m}\left\{x^{i+n_1}y^{i+n_2}~|~0\leq n_1, n_2 \mbox{ and } 0\leq n_1+n_2\leq \frac{3}{2}(m-i )\right\}.\]
These monomials are plotted in the shade area as below:
\medskip

\begin{center}
\begin{tikzpicture}[scale=0.55, line cap=round,line join=round,>=triangle 45,x=0.2cm,y=0.2cm]
\draw [color=cqcqcq,, xstep=1.0cm,ystep=1.0cm] (-5.,-35.) grid (38.,8.);
\clip(-5.,-35.) rectangle (38.,8.);
\fill[color=sqsqsq,fill=sqsqsq,fill opacity=0.1] (0.,-15.) -- (10.,-20.) -- (15.,-30.) -- (0.,-30.) -- cycle;
\draw [->] (0.,-30.) -- (0.,5.);
\draw [->] (0.,-30.) -- (35.,-30.);
\draw (0.,0.)-- (30.,-30.);
\draw (8,-30) node[anchor=north west] {$m$};
\draw (-5,-18) node[anchor=north west] {$m$};
\draw (17.5,-30) node[anchor=north west] {$2m$};
\draw (-6.3,-8) node[anchor=north west] {$2m$};
\draw (27,-30) node[anchor=north west] {$3m$};
\draw (-6.3,2) node[anchor=north west] {$3m$};
\draw (35,-28) node[anchor=north west] {$x$};
\draw (-2,9) node[anchor=north west] {$y$};
\draw [color=sqsqsq] (0.,-15.)-- (10.,-20.);
\draw [color=sqsqsq] (10.,-20.)-- (15.,-30.);
\draw [color=sqsqsq] (15.,-30.)-- (0.,-30.);
\draw [color=sqsqsq] (0.,-30.)-- (0.,-15.);
\end{tikzpicture}

\end{center}
We then obtain $$\limsup_m \frac{m\ell_m}{v_m}= \frac{12}{7}$$ from Figure~\ref{table}.

\medskip
\emph{Case 2. The curve  $C$ is reducible.}
\medskip

The cubic curve $C$ then consists of either one line and one irreducible conic or three lines.

\medskip
\emph{Subcase 1. The curve  $C$ consists of a line $L$ and an irreducible conic $Q$.}
\medskip

In this subcase, we may  assume that the Zariski tangent term of the defining polynomial of~$L$ on $U$ contains the monomial $x$ and that of $Q$ contains the monomial $y$.

We write $G=m_1L+m_2Q+\Omega$, where $m_1$ and $m_2$ are non-negative integers and $\Omega$ is an effective divisor whose support contains neither $L$ nor $Q$.

We first suppose that $L$ passes through exactly one of the points $\pi(M_1),\ldots,\pi(M_{6})$. From
\[6m=Q\cdot \left( m_1L+m_2Q+\Omega\right )\geq 2m_1+4m_2+5(m-m_2)+\mult_q\Omega,\]
\[3m=L\cdot \left( m_1L+m_2Q+\Omega\right )\geq m_1+2m_2+(m-m_1)+\mult_q\Omega, \]
we obtain 
$m_2\leq m$,  $2m_1-m_2\leq m$ and 
\[\mult_q\Omega\leq \min\left\{
 m-2m_1+m_2, 
2m-2m_2\right\}.\]
Therefore, the injective map $\iota_m$ in \eqref{index-map2} can be defined in such a way that its image sits in the set
\[\bigcup_{\substack{0\leq m_2\leq m\\ 2m_1- m_2\leq m}}
\left\{x^{m_1+n_1}y^{m_2+n_2}~|~0\leq n_1, n_2 \mbox{ and } 0\leq n_1+n_2\leq \min\left\{
 \aligned
& m-2m_1+m_2\\
& 2m-2m_2\\
\endaligned  \right\} \right\}
.\]
It is easy to check that the monomials in this set are plotted in the shade area of the following diagram:
 \medskip

\begin{center}
\begin{tikzpicture}[scale=0.55, line cap=round,line join=round,>=triangle 45,x=0.2cm,y=0.2cm]
\draw [color=cqcqcq,, xstep=1cm,ystep=1cm] (-5.,-35.) grid (38.,8.);
\clip(-5.,-35.) rectangle (38.,8.);
\fill[color=sqsqsq,fill=sqsqsq,fill opacity=0.1] (10.,-30.) -- (15.,-25.) -- (0.,-10.) -- (0.,-30.) -- cycle;
\draw [->] (0.,-30.) -- (0.,5.);
\draw [->] (0.,-30.) -- (35.,-30.);
\draw (0.,0.)-- (30.,-30.);
\draw (10.,-30.)-- (15.,-25.);
\draw (15.,-25.)-- (10.,-20.);
\draw (10.,-20.)-- (0.,-10.);
\draw [color=sqsqsq] (10.,-30.)-- (15.,-25.);
\draw [color=sqsqsq] (15.,-25.)-- (0.,-10.);
\draw [color=sqsqsq] (0.,-10.)-- (0.,-30.);
\draw [color=sqsqsq] (0.,-30.)-- (10.,-30.);
\draw (8,-30) node[anchor=north west] {$m$};
\draw (-5,-18) node[anchor=north west] {$m$};
\draw (17.5,-30) node[anchor=north west] {$2m$};
\draw (-6.3,-8) node[anchor=north west] {$2m$};
\draw (27,-30) node[anchor=north west] {$3m$};
\draw (-6.3,2) node[anchor=north west] {$3m$};
\draw (35,-28) node[anchor=north west] {$x$};
\draw (-2,9) node[anchor=north west] {$y$};
\end{tikzpicture}

\end{center}
 \medskip
 
We now suppose that $L$ passes through two points of $\pi(M_1),\ldots,\pi(M_{6})$. Then, from the inequalities
\[6m=Q\cdot \left( m_1L+m_2Q+\Omega\right )\geq 2m_1+4m_2+4(m-m_2)+\mult_q\Omega,\]
\[3m=L\cdot \left( m_1L+m_2Q+\Omega\right )\geq m_1+2m_2+2(m-m_1)+\mult_q\Omega, \]
we obtain 
$m_1\leq m$,  $2m_2-m_1\leq m$ and 
\[\mult_q\Omega\leq \min\left\{
 m-2m_2+m_1, 
2m-2m_1\right\}.\]
As the previous case, the injective map $\iota_m$ in \eqref{index-map2}  can be chosen in a way that its image is contained in the set
\[\bigcup_{\substack{0\leq m_1\leq m\\ 2m_2- m_1\leq m}}
\left\{x^{m_1+n_1}y^{m_2+n_2}~|~0\leq n_1, n_2 \mbox{ and } 0\leq n_1+n_2\leq \min\left\{
 \aligned
& m-2m_2+m_1\\
& 2m-2m_1\\
\endaligned  \right\} \right\}
.\]
The monomials in this set are plotted in the shade area of the following diagram:
 \medskip
 
\begin{center}
\begin{tikzpicture}[scale=0.55, line cap=round,line join=round,>=triangle 45,x=0.2cm,y=0.2cm]
\draw [color=cqcqcq,, xstep=1cm,ystep=1cm] (-5.,-35.) grid (38.,8.);
\clip(-5.,-35.) rectangle (38.,8.);
\fill[color=sqsqsq,fill=sqsqsq,fill opacity=0.1] (0.,-30.) -- (0.,-20) -- (5.,-15.) -- (20.,-30.) -- cycle;
\draw [->] (0.,-30.) -- (0.,5.);
\draw [->] (0.,-30.) -- (35.,-30.);
\draw (0.,0.)-- (30.,-30.);
\draw (15.,-25.)-- (10.,-20.);
\draw (10.,-20.)-- (5.,-15.);
\draw (0.,-20.)-- (5.,-15.);
\draw [color=sqsqsq] (20.,-30.)-- (15.,-25.);
\draw [color=sqsqsq] (0.,-20.)-- (5.,-15.);
\draw [color=sqsqsq] (0.,-30.)-- (0.,-15.);
\draw [color=sqsqsq] (0.,-30.)-- (20.,-30.);
\draw  [color=sqsqsq] (10.,-20.)-- (5.,-15.);
\draw (8,-30) node[anchor=north west] {$m$};
\draw (-5,-18) node[anchor=north west] {$m$};
\draw (17.5,-30) node[anchor=north west] {$2m$};
\draw (-6.3,-8) node[anchor=north west] {$2m$};
\draw (27,-30) node[anchor=north west] {$3m$};
\draw (-6.3,2) node[anchor=north west] {$3m$};
\draw (35,-28) node[anchor=north west] {$x$};
\draw (-2,9) node[anchor=north west] {$y$};
\end{tikzpicture}

\end{center}
 \medskip
Therefore, Figure~\ref{table} implies 
$$\limsup_m \frac{m\ell_m}{v_m}= \frac{36}{31}.$$

\medskip
\emph{Subcase 2.  The curve $C$ consists of three lines $L_1$, $L_2$, $L_3$ with $\mult_q(C)=2$.}
\medskip

We may assume that $q$ is the intersection point of $L_1$ and $L_2$. In addition, we may assume that $L_1$ is defined by $x=0$  and $L_2$ by $y=0$.

We write $G=m_1L_1+m_2L_2+m_3L_3+\Omega$, where $m_1, m_2, m_3$ are non-negative integers and $\Omega$ is an effective divisor whose support contains none of $L_1, L_2, L_3$. From the inequalities 
\[3m=L_1\cdot (m_1L_1+m_2L_2+m_3L_3+\Omega)\geq m_1+m_2+m_3+2(m-m_1)+\mult_q(\Omega),\] 
\[3m=L_2\cdot (m_1L_1+m_2L_2+m_3L_3+\Omega)\geq m_1+m_2+m_3+2(m-m_2)+\mult_q(\Omega),\]
\[3m=L_3\cdot (m_1L_1+m_2L_2+m_3L_3+\Omega)\geq m_1+m_2+m_3+2(m-m_3),\]  
we see that 
the injective map $\iota_m$ in \eqref{index-map2}  can be defined in such a way that its image is contained in the set
\[\bigcup_{\substack{0\leq m_1,m_2,m_3\leq m\\ m_1+m_2\leq m_3+m}}
\left\{x^{m_1+n_1}y^{m_2+n_2}~|~0\leq n_1, n_2 \mbox{ and }  0\leq n_1+n_2\leq \min\left\{
\aligned
&m-m_1+m_2-m_3\\
& m+m_1-m_2-m_3\\
\endaligned  \right\} \right\}
.\]
The monomials in this set are plotted in the shade area of the following diagram:
 \medskip
 
\begin{center}
\begin{tikzpicture}[scale=0.55, line cap=round,line join=round,>=triangle 45,x=0.2cm,y=0.2cm]
\draw [color=cqcqcq,, xstep=1.0cm,ystep=1.0cm] (-5.,-35.) grid (38.,8.);
\clip(-5.,-35.) rectangle (38.,8.);
\fill[color=sqsqsq,fill=sqsqsq,fill opacity=0.1] (0.,-20.) -- (5.,-15.) -- (15.,-25.) -- (10.,-30.) -- (0.,-30.) -- cycle;
\draw [->] (0.,-30.) -- (0.,5.);
\draw [->] (0.,-30.) -- (35.,-30.);
\draw (0.,0.)-- (30.,-30.);
\draw (8,-30) node[anchor=north west] {$m$};
\draw (-5,-18) node[anchor=north west] {$m$};
\draw (17.5,-30) node[anchor=north west] {$2m$};
\draw (-6.3,-8) node[anchor=north west] {$2m$};
\draw (27,-30) node[anchor=north west] {$3m$};
\draw (-6.3,2) node[anchor=north west] {$3m$};
\draw (35,-28) node[anchor=north west] {$x$};
\draw (-2,9) node[anchor=north west] {$y$};
\draw [color=sqsqsq] (0.,-20.)-- (5.,-15.);
\draw [color=sqsqsq] (5.,-15.)-- (15.,-25.);
\draw [color=sqsqsq] (15.,-25.)-- (10.,-30.);
\draw [color=sqsqsq] (10.,-30.)-- (0.,-30.);
\draw [color=sqsqsq] (0.,-30.)-- (0.,-20.);
\end{tikzpicture}

\end{center}
 \medskip
Therefore,
$$\limsup_m \frac{m\ell_m}{v_m}= \frac{18}{11}.$$

\medskip
\emph{Subcase 3. The curve $C$ consists of three lines $L_1$, $L_2$, $L_3$ with $\mult_q(C)=3$.}
\medskip

The line $L_1$ can be assumed to be defined by $x=0$, $L_2$ by $y=0$ and $L_3$ by $x+y=0$.

We write $G=m_1L_1+m_2L_2+m_3L_3+\Omega$, where $m_1, m_2, m_3$ are non-negative integers and $\Omega$ is an effective divisor whose support contains none of $L_1, L_2, L_3$. 
The three inequalities 

\[3m=L_1\cdot (m_1L_1+m_2L_2+m_3L_3+\Omega)\geq m_1+m_2+m_3+2(m-m_1)+\mult_q(\Omega),\] 
\[3m=L_2\cdot (m_1L_1+m_2L_2+m_3L_3+\Omega)\geq m_1+m_2+m_3+2(m-m_2)+\mult_q(\Omega),\]
\[3m=L_3\cdot (m_1L_1+m_2L_2+m_3L_3+\Omega)\geq m_1+m_2+m_3+2(m-m_3)+\mult_q(\Omega)\]  
show that the injective map $\iota_m$ in \eqref{index-map2} can be arranged in such a way that
 its  image is  contained in the set
\[\bigcup_{0\leq m_1, m_2, m_3\leq m}
\left\{x^{m_1+m_3+n_1}y^{m_2+n_2}~|~0\leq n_1, n_2 \mbox{ and }  0\leq n_1+n_2\leq  \min\left\{
\aligned
&m+m_1-m_2-m_3\\
& m-m_1+m_2-m_3\\
& m-m_1-m_2+m_3\\
\endaligned  \right\}  \right\}
.\]
The monomials in this set sit in the shade area in the diagram below:
 \medskip
 
\begin{center}
\begin{tikzpicture}[scale=0.55, line cap=round,line join=round,>=triangle 45,x=0.2cm,y=0.2cm]
\draw [color=cqcqcq,, xstep=1.0cm,ystep=1.0cm] (-5.,-35.) grid (40.,8.);
\clip(-5.,-35.) rectangle (40.,8.);
\fill[color=sqsqsq,fill=sqsqsq,fill opacity=0.1] (10.,-30.) -- (20.,-20.) -- (0.,-20.) -- (0.,-30.) -- cycle;
\draw [->] (0.,-30.) -- (0.,5.);
\draw [->] (0.,-30.) -- (35.,-30.);
\draw (0.,0.)-- (30.,-30.);
\draw (8,-30) node[anchor=north west] {$m$};
\draw (-5,-18) node[anchor=north west] {$m$};
\draw (17.5,-30) node[anchor=north west] {$2m$};
\draw (-6.3,-8) node[anchor=north west] {$2m$};
\draw (27,-30) node[anchor=north west] {$3m$};
\draw (-6.3,2) node[anchor=north west] {$3m$};
\draw (35,-28) node[anchor=north west] {$x$};
\draw (-2,9) node[anchor=north west] {$y$};
\draw [color=sqsqsq] (10.,-30.)-- (20.,-20.);
\draw [color=sqsqsq] (20.,-20.)-- (0.,-20.);
\draw [color=sqsqsq] (0.,-20.)-- (0.,-30.);
\draw [color=sqsqsq] (0.,-30.)-- (10.,-30.);
\end{tikzpicture}

\end{center}
 \medskip
Figure~\ref{table} then implies 
$$\limsup_m \frac{m\ell_m}{v_m}= \frac{9}{7}.$$

Consequently, Cases~1 and 2 imply $\delta(S)\geq \frac{36}{31}$.
\end{proof}

 \begin{theorem}\label{theorem:d4}
Let $S$ be a del Pezzo surface of degree $4$. Then $\delta(S)\geq \frac{12}{11}$.
\end{theorem}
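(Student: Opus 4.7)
The plan is to apply the general procedure of Section~\ref{section:K-stable del Pezzo surfaces} to $S_4$.  Fix $p\in S_4$ and contract five mutually disjoint $(-1)$-curves $M_1,\ldots,M_5$ not through $p$ by a birational morphism $\pi:S_4\to\mathbb{P}^2$; set $q=\pi(p)=[0:0:1]$.  Then $\mathcal{L}_m$ parametrises plane curves of degree $3m$ through $\pi(M_1),\ldots,\pi(M_5)$ with multiplicity $\geq m$, and $\ell_m=2m^2+2m+1$.  The first task is to bound $\mult_q G$ for $G\in\mathcal{L}_m$, which we carry out by intersecting the corresponding $\tilde G\in|{-mK_{S_4}}|$ with anticanonical divisors singular at $p$ (these form a pencil, as $\dim|{-K_{S_4}}|-3=1$).

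The case analysis is organised by the $(-1)$-curves of $S_4$ through $p$.  Besides the excluded $M_i$, $S_4$ carries ten $(-1)$-curves $L_{ij}$ (strict transforms of lines through pairs of $\pi(M_k)$'s) and the $(-1)$-curve $Q_5$ (strict transform of the unique conic through all five).  From $L_{ij}\cdot L_{ik}=0=Q_5\cdot L_{ij}$ only four configurations for $p$ arise: (a) $p$ on no $(-1)$-curve; (b) $p\in L_{ij}$ for a unique pair; (c) $p\in L_{ij}\cap L_{kl}$ with $\{i,j\}\cap\{k,l\}=\varnothing$; (d) $p\in Q_5$.  In case (a), a Bezout argument shows the pencil contains an irreducible nodal cubic singular at $p$, which yields the sharp bound $\mult_q G\leq 2m$.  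In cases (b)--(d), however, every cubic in the pencil is forced to be reducible (its intersection with the pertinent $L_{ij}$ or $Q_5$ would otherwise exceed their product); accordingly we decompose $\tilde G=\sum_c m_cC+\Omega$ over the $(-1)$-curves $C$ through $p$ and bound the $m_c$'s and $\mult_q\Omega$ from a system of intersection inequalities with the remaining $L_{ij}$, $Q_5$, and a suitable reducible anticanonical cubic singular at $q$, analogous to Subcase~2 of Case~2 in the proof of Theorem~\ref{theorem:d3}.  This confines the image of the injective map $\iota_m$ to an explicit region in $\mathbb{R}^2$ whose attainable maximum $v_m$ is read off from Figure~\ref{table}.

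Because $\dim\mathcal{T}_1=5$ forces algebraic dependence among the generators $t_i$ as polynomials in $x,y$, the products $\prod t_i^{n_i}$ are not linearly independent and Steps~0 and~1 cannot produce the required set $\mathcal{B}_m^s$; accordingly we skip them and invoke Step~2 with $a=\ell_m$, applying Lemma~\ref{lemma:injection2} to construct $I_m$ and Lemma~\ref{lemma:injection-after-coordinate-change} to transport it through the coordinate changes of Step~C in the proof of Theorem~\ref{theorem:d=9}.  The main obstacle lies in cases~(b)--(d), where several $(-1)$-curves of $S_4$ and possibly $Q_5$ simultaneously contribute to the decomposition of $G$, so the multiplicities must be controlled by a larger intersection system than in the $d\leq 3$ arguments; the worst of the four cases produces $m\ell_m/v_m\geq\frac{12}{11}$, from which the theorem follows by taking the infimum over $p\in S_4$.
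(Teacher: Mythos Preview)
Your proposal contains a genuine misstep concerning Steps~0 and~1. You claim that because $\dim\mathcal{T}_1=5$ forces algebraic dependence among the $t_i$'s, ``the products $\prod t_i^{n_i}$ are not linearly independent and Steps~0 and~1 cannot produce the required set $\mathcal{B}_m^s$''. You are right that the products are not all distinct (for instance with $t_4=xy=t_2t_3$ one has $t_4t_1^{m-1}=t_2t_3t_1^{m-2}$), and the paper's casual remark $b=\binom{m+d}{d}$ in Step~1 is indeed inaccurate here. But this does not break the method: what matters is the set $\mathcal{C}_m$ of \emph{distinct} monomials $\prod\mathbf{x}_{1,i}^{n_i}$, and distinct monomials are automatically linearly independent. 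The Appendix proof of Lemmas~\ref{lemma:injection1}--\ref{lemma:injection2} uses only that each monomial in $\mathcal{C}_m$ occurs as the leading monomial (in the chosen order) of some element of $\mathcal{T}_m$, forcing $\mathcal{C}_m\subset\{\text{pivot columns}\}$. So Steps~0--1 are available, with $b=|\mathcal{C}_m|$ rather than $\binom{m+4}{4}$.

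This matters because the paper's proof \emph{does} invoke Steps~0--1 in exactly the case you single out as hardest, your case~(c) (two $(-1)$-curves through $p$, the paper's Case~3). There one takes $t_1=1$, $t_2=x$, $t_3=y$, $t_4=xy$, $t_5=xy(x+y)$ with $\mathbf{x}_{1,5}=x^2y$; the resulting $\mathcal{C}_m$ is the parallelogram with vertices $(0,0),(m,0),(2m,m),(0,m)$, which already accounts for most of the $\ell_m$ pivots. The remaining $u_i$'s are then confined, via the three-line intersection inequalities, to the triangle between the lines $a-b=m$ and $a+b=3m$, and the union of the two regions gives $v_m\sim\frac{11}{6}m^3$, whence the bound $12/11$. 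In Cases~1 and~2 (your (a), (b), (d)) the paper does skip Steps~0--1 and bounds $\mathrm{mult}_q(G)\le 2m$ directly, yielding the easier $3/2$.

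Your alternative for case~(c)---decompose $\tilde G$ over the $(-1)$-curves through $p$ and bound everything by intersection inequalities in the style of Theorem~\ref{theorem:d3}, Subcase~3---is not fleshed out enough to check. Note in particular that the obvious three-line region here is \emph{not} the same parallelogram as for $d=3$ (the inequality from $L_3$ reads $2m-m_1-m_2$ rather than $m+m_3-m_1-m_2$, since $L_3$ meets only one base point), and you have not exhibited a region that simultaneously (i) contains all $\ell_m\sim 2m^2$ pivots and (ii) has $v_m\le\frac{11}{6}m^3+O(m^2)$. Without that computation the claimed bound $12/11$ is unsupported.
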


\begin{proof} 
Note that at most two $(-1)$-curves can pass through a given point $p$ on $S$. 

Let $G$ be the effective divisor of a section in $\mathcal{L}_m$. 

\medskip
\emph{Case 1.  There is no $(-1)$-curve that passes through the point $p$.}
\medskip

In this case,  there is  an irreducible cubic   curve  $C$  on $\mathbb{P}^2$ that passes through the points $\pi(M_1),\ldots,\pi(M_{5})$ and that is singular at the point~$q$. We may assume that the Zariski tangent term of the defining polynomial of $C$ on $U$ contains the monomial $xy$.

  We write $G=m'C+\Omega$, where $m'$ is a non-negative integer not bigger than $m$ and $\Omega$ is an effective divisor whose support does not contain $C$. From the inequality 
$$9(m-m')=C\cdot\Omega\geq 5(m-m')+2\mult_q(\Omega),$$ we obtain
\[\mult_q(\Omega)\leq 2(m-m').\]
Therefore,  the injective map $\iota_m$ in \eqref{index-map2} can be chosen in such a way that its image  is contained in the set
\[\bigcup_{i=0}^m
\left\{x^{i+n_1}y^{i+n_2}~|~0\leq n_1, n_2 \mbox{ and } 0\leq n_1+n_2\leq 2(m-i) \right\}
.\]
The monomials in this set correspond to the integral points in the shade area below:
\medskip

\begin{center}
\begin{tikzpicture}[scale=0.55, line cap=round,line join=round,>=triangle 45,x=0.2cm,y=0.2cm]
\draw [color=cqcqcq,, xstep=1.0cm,ystep=1.0cm] (-5.,-35.) grid (38.,8.);
\clip(-5.,-35.) rectangle (38.,8.);
\fill[color=sqsqsq,fill=sqsqsq,fill opacity=0.1] (20.,-30.) -- (0.,-10.) -- (0.,-30.) -- cycle;
\draw [->] (0.,-30.) -- (0.,5.);
\draw [->] (0.,-30.) -- (35.,-30.);
\draw (0.,0.)-- (30.,-30.);
\draw (8,-30) node[anchor=north west] {$m$};
\draw (-5,-18) node[anchor=north west] {$m$};
\draw (17.5,-30) node[anchor=north west] {$2m$};
\draw (-6.3,-8) node[anchor=north west] {$2m$};
\draw (27,-30) node[anchor=north west] {$3m$};
\draw (-6.3,2) node[anchor=north west] {$3m$};
\draw (35,-28) node[anchor=north west] {$x$};
\draw (-2,9) node[anchor=north west] {$y$};
\draw [color=sqsqsq] (20.,-30.)-- (0.,-10.);
\draw [color=sqsqsq] (0.,-10.)-- (0.,-30.);
\draw [color=sqsqsq] (0.,-30.)-- (20.,-30.);
\end{tikzpicture}
\end{center}
 \medskip
Therefore, Figure~\ref{table} implies 
$$\limsup_m \frac{m\ell_m}{v_m}=\frac{3}{2}.$$

\medskip

\emph{Case 2. There is only one $(-1)$-curve that passes through the point $p$.}
\medskip

In this case, there is either a line  passing through the point $q$ and two of the points~$\pi(M_1),\ldots,\pi(M_{5})$
or an irreducible conic passing through  $q$ and $\pi(M_1),\ldots,\pi(M_{5})$. We consider the former case only since the latter can be dealt in almost the same way. Let $L$ be the line in the former case.
We may assume that it passes through $\pi(M_1)$ and $\pi(M_2)$. Also, there is an irreducible conic $Q$ that passes through the points $q$ and 
$\pi(M_2),\ldots,\pi(M_{5})$.
We write $G=m_1L+m_2Q+\Omega$, where $m_1, m_2$ are non-negative integers  and $\Omega$ is an effective divisor whose support contains neither $L$ nor $Q$.

From the inequalities
\[3m=L\cdot(m_1L+m_2Q+\Omega)\geq m_1+2m_2+2(m-m_1)+\mult_q(\Omega),\]
\[6m=Q\cdot(m_1L+m_2Q+\Omega)\geq 2m_1+4m_2+4(m-m_2)+\mult_q(\Omega),\]
we obtain  $$ \mult_q(\Omega)\leq \min\{m+m_1-2m_2, 2m-2m_1\}.$$
This implies that
\[\mult_q(G)= m_1+m_2+\mult_q(\Omega)\leq \min\{m+2m_1-m_2, 2m-m_1+m_2\}\leq 2m.\]
Therefore, the image of the injective map $\iota_m$ in \eqref{index-map2} is always contained in the same set as in the previous case, and hence 
$$\limsup_m \frac{m\ell_m}{v_m}=\frac{3}{2}.$$

\medskip
\emph{Case 3.  There are two $(-1)$-curves  that pass through the point $p$.}
\medskip

There are two distinct lines $L_1$,  $L_2$ passing through the point $q$ and two of the points $\pi(M_1),\ldots,\pi(M_{5})$.  We may assume that $L_1$ passes through $\pi(M_1)$, $\pi(M_2)$ and that $L_2$ passes through $\pi(M_3)$, $\pi(M_4)$. Let $L_3$ be the line determined by $q$ and $\pi(M_5)$.

By suitable coordinate changes, we may assume that the line $L_1$ is defined by $x=0$,  $L_2$ by~$y=0$ and $L_3$ by $x+y=0$. We then see that 
$$t_1=1, \ t_2= x, \ t_3=y, \ t_4=xy, \ t_5=xy(x+y)$$
form a basis  for $\mathcal{T}_1$. Furthermore, we may take 
$$\mathbf{x}_{1,1}=1, \ \mathbf{x}_{1,2}=x, \ \mathbf{x}_{1,3}=y,\ \mathbf{x}_{1,4}=xy,\ \mathbf{x}_{1,5}=x^2y.$$ 
 Then the set \[\mathcal{C}_m=\left\{ \prod_{i=1}^{5} \mathbf{x}_{1,i}^{n_i} \ \Big| \ \mbox{$n_i$ are non-negative integers with $n_1+\cdots+n_{5}=m$}\right\}\]
consists of the monomials corresponding the integral  points in the shade area of the following diagram:
\begin{center}
 \medskip

\begin{tikzpicture}[scale=0.55, line cap=round,line join=round,>=triangle 45,x=0.2cm,y=0.2cm]
\draw [color=cqcqcq,, xstep=1.0cm,ystep=1.0cm] (-5.,-35.) grid (38.,8.);
\clip(-5.,-35.) rectangle (38.,8.);
\fill[color=sqsqsq,fill=sqsqsq,fill opacity=0.1] (10.,-30.) -- (20.,-20.) -- (0.,-20.) -- (0.,-30.) -- cycle;
\draw [->] (0.,-30.) -- (0.,5.);
\draw [->] (0.,-30.) -- (35.,-30.);
\draw (0.,0.)-- (30.,-30.);
\draw (8,-30) node[anchor=north west] {$m$};
\draw (-5,-18) node[anchor=north west] {$m$};
\draw (17.5,-30) node[anchor=north west] {$2m$};
\draw (-6.3,-8) node[anchor=north west] {$2m$};
\draw (27,-30) node[anchor=north west] {$3m$};
\draw (-6.3,2) node[anchor=north west] {$3m$};
\draw (35,-28) node[anchor=north west] {$x$};
\draw (-2,9) node[anchor=north west] {$y$};
\draw [color=sqsqsq] (10.,-30.)-- (20.,-20.);
\draw [color=sqsqsq] (20.,-20.)-- (0.,-20.);
\draw [color=sqsqsq] (0.,-20.)-- (0.,-30.);
\draw [color=sqsqsq] (0.,-30.)-- (10.,-30.);
\end{tikzpicture}

\end{center}
 \medskip

We now write $G=m_1L_1+m_2L_2+m_3L_3+\Omega$, where $m_1, m_2, m_3$ are non-negative integers  and $\Omega$ is an effective divisor whose support contains none of $L_1$, $L_2$, $L_3$. The inequities 
\[3m=L_1\cdot(m_1L_1+m_2L_2+m_3L_3+\Omega)\geq m_1+m_2+m_3+2(m-m_1)+\mult_q(\Omega),\]
\[3m=L_2\cdot(m_1L_1+m_2L_2+m_3L_3+\Omega)\geq m_1+m_2+m_3+2(m-m_2)+\mult_q(\Omega),\]
\[3m=L_3\cdot(m_1L_1+m_2L_2+m_3L_3+\Omega)\geq m_1+m_2+m_3+(m-m_3)+\mult_q(\Omega)\]
imply
\[\mult_q(\Omega)\leq \min\{m+m_1-m_2-m_3, m-m_1+m_2-m_3, 2m-m_1-m_2\}.\]
This shows that the images $\iota_m(u_1),\ldots, \iota_m(u_a)$ can be plotted only above or on the line joining $(m,0)$ and $(2m, m)$.
Since $\deg (G)=3m$, they must sit only below or on the line joining $(3m,0)$ and $(0, 3m)$.
Consequently, the maximum value $v_m$ can be attained when the image of  the injective map $\iota_m$ in \eqref{index-map2}
are contained in the set of the monomials corresponding the integral  points in the shade area of the following diagram:
\medskip

\begin{center}
\begin{tikzpicture}[scale=0.55, line cap=round,line join=round,>=triangle 45,x=0.2cm,y=0.2cm]
\draw [color=cqcqcq,, xstep=1.0cm,ystep=1.0cm] (-5.,-35.) grid (38.,8.);
\clip(-5.,-35.) rectangle (38.,8.);
\fill[color=sqsqsq,fill=sqsqsq,fill opacity=0.1] (0.,-30.) -- (10.,-30.) -- (20.,-20.) -- (10.,-10.) -- (10.,-20.) -- (0.,-20.) -- cycle;
\draw [->] (0.,-30.) -- (0.,5.);
\draw [->] (0.,-30.) -- (35.,-30.);
\draw (0.,0.)-- (30.,-30.);
\draw (8,-30) node[anchor=north west] {$m$};
\draw (-5,-18) node[anchor=north west] {$m$};
\draw (17.5,-30) node[anchor=north west] {$2m$};
\draw (-6.3,-8) node[anchor=north west] {$2m$};
\draw (27,-30) node[anchor=north west] {$3m$};
\draw (-6.3,2) node[anchor=north west] {$3m$};
\draw (35,-28) node[anchor=north west] {$x$};
\draw (-2,9) node[anchor=north west] {$y$};
\draw [color=sqsqsq] (0.,-30.)-- (10.,-30.);
\draw [color=sqsqsq] (10.,-30.)-- (20.,-20.);
\draw [color=sqsqsq] (20.,-20.)-- (10.,-10.);
\draw [color=sqsqsq] (10.,-10.)-- (10.,-20.);
\draw [color=sqsqsq] (10.,-20.)-- (0.,-20.);
\draw [color=sqsqsq] (0.,-20.)-- (0.,-30.);
\end{tikzpicture}
\end{center}
 \medskip
Therefore,
$$\limsup_m \frac{m\ell_m}{v_m}= \frac{12}{11}.$$

Consequently, Cases~1, 2 and 3 imply $\delta(S)\geq \frac{12}{11}$.

\end{proof}

 \begin{theorem}\label{theorem:d5}
Let $S$ be a del Pezzo surface of degree $5$. Then $\delta(S)\geq \frac{15}{14}$.
\end{theorem}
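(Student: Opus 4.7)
I follow the same template as Theorems~\ref{theorem:d1}--\ref{theorem:d4}. Fix an arbitrary point $p\in S$, contract four mutually disjoint $(-1)$-curves $M_1,\ldots,M_4$ that do not meet $p$ to obtain $\pi:S\to\mathbb{P}^2$, and set $q=\pi(p)$. The ten $(-1)$-curves on $S$ are the exceptional divisors $M_i$ (none through $p$) together with the proper transforms $L_{ij}$ of the six lines $\overline{\pi(M_i)\pi(M_j)}$, so the $(-1)$-curves meeting $p$ correspond bijectively to those lines passing through $q$. Since the intersection graph of the ten $(-1)$-curves on a smooth del Pezzo surface of degree $5$ is the Petersen graph, which contains no triangle, at most two $(-1)$-curves meet at $p$.

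The plan is to split into three cases according to the number $0$, $1$, or $2$ of $(-1)$-curves through $p$. In each case I choose a monomial basis $\{t_1,\ldots,t_6\}$ of $\mathcal{T}_1$ adapted to the geometry at $q$; for instance in Case~3, with $L_1:x=0$ and $L_2:y=0$, the six monomials $1,x,y,xy,x^2,y^2$ work, so $\mathbf{x}_{1,i}=t_i$. For $G\in\mathcal{L}_m$ I decompose $G$ along the curves visible at $p$---the $(-1)$-curves through $p$, and when available an irreducible cubic nodal at $q$ through $\pi(M_1),\ldots,\pi(M_4)$---and use the intersection identities $L_{ij}\cdot G=m$ and $C\cdot G=5m$ to bound $\mult_q(\Omega)$ for the residual divisor. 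Following Steps~0--4 of the framework at the beginning of this section then confines the image of $\iota_m$ to a polygon in $\mathbb{Z}_{\geq 0}^2$, and the first-moment integral over that polygon, read off from Figure~\ref{table}, bounds the leading $m^3$-coefficient of $v_m$, giving $\delta(S)\geq\limsup_m m\ell_m/v_m\geq 15/14$.

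Case~1 is the easiest: since no three of the five points $\pi(M_1),\ldots,\pi(M_4),q$ are collinear, a generic member of the three-dimensional family of cubics nodal at $q$ through the four $\pi(M_i)$ is irreducible, and Bezout $C\cdot\Omega\geq 2\mult_q(\Omega)$ applied to $G=m'C+\Omega$ yields $\mult_q(\Omega)\leq 5(m-m')/2$, confining the image of $\iota_m$ to the triangle $\{a+b\leq 5m/2\}$. The hard part is Case~3: since $q,\pi(M_1),\pi(M_2)$ lie on $L_1$ (and $q,\pi(M_3),\pi(M_4)$ on $L_2$), Bezout along $L_1$ forces every cubic singular at $q$ through the four $\pi(M_i)$ to contain $L_1$ as a component, and similarly for $L_2$, so such a cubic factors as $L_1\cdot L_2\cdot L'$ for some line $L'$; consequently the irreducible-nodal-cubic bound is lost, and one must argue solely from $L_1\cdot G=L_2\cdot G=m$. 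Writing $G=m_1L_1+m_2L_2+\Omega$, this gives $\mult_q(\Omega)\leq m-|m_1-m_2|$, and the image of $\iota_m$ is confined to the pentagon with vertices $(0,0),(m,0),(2m,m),(m,2m),(0,m)$. Direct computation (or Figure~\ref{table}) then delivers the $m^3$-coefficient of $v_m$ required to reach $15/14$.

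Case~2 sits between the other two: the unique line $L=\overline{\pi(M_1)\pi(M_2)}$ through $q$ again prevents the existence of an irreducible nodal cubic through the four $\pi(M_i)$, so the bound $L\cdot G=m$ must be supplemented by an auxiliary curve, for example a conic through $q$ and three of the $\pi(M_i)$, or the reducible cubic $L+L'+L''$ obtained by joining $q$ to each of $\pi(M_3),\pi(M_4)$ by lines. Assembling these intersection inequalities restricts the image of $\iota_m$ to a polygon whose first-moment integral again satisfies the desired bound via Figure~\ref{table}. I expect Case~3 to be the binding one, because two $(-1)$-curves through $p$ allow the largest multiplicity of $G$ at $p$ and the fewest auxiliary curves, and the pentagon it produces already saturates the required $15/14$.
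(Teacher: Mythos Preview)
Your three-case split by the number of $(-1)$-curves through $p$ is a harmless refinement of the paper's two-case split (the paper separates according to whether some $C\in|-K_S|$ has $\mult_p C=3$, which is equivalent to \emph{some} $(-1)$-curve passing through $p$); so your Case~1 is the paper's Case~1, and your Cases~2 and~3 together form the paper's Case~2. The overall method is the same.

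There is, however, a genuine error in your Case~3. With $L_1:x=0$ through $q,\pi(M_1),\pi(M_2)$ and $L_2:y=0$ through $q,\pi(M_3),\pi(M_4)$, \emph{every} plane cubic through the four points that is singular at $q$ must contain both $L_1$ and $L_2$ (B\'ezout on each line forces it), so its tangent cone at $q$ is a scalar multiple of $xy$; hence $x^2,y^2\notin\mathcal{T}_{2,1}$ and your claimed basis $\{1,x,y,xy,x^2,y^2\}$ is impossible. The correct basis of $\mathcal{T}_1$ here is $\{1,x,y,xy,x^2y,xy^2\}$ (the last two coming from $L_1L_2L'$ with $L'$ a line through $q$). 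With this basis the set $\mathcal{C}_m$ is exactly your pentagon with vertices $(0,0),(m,0),(2m,m),(m,2m),(0,m)$ and has precisely $\ell_m$ lattice points, so $a=0$; computing the first moments gives $v_m\sim\frac{13}{6}m^3$ and hence $\limsup m\ell_m/v_m=\frac{15}{13}$, not $\frac{15}{14}$.

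Consequently you also have the binding case backwards. Case~3 (and likewise Case~2, which the paper treats with the basis $1,x,y,x^2,xy,xy(x+y)$ and monomials $1,x,y,x^2,xy,xy^2$) yields $\frac{15}{13}$. It is your ``easiest'' Case~1 that actually saturates the bound: there $\mathcal{C}_m$ is only the triangle $\{a+b\le 2m\}$, so $a=\ell_m-(2m+1)(m+1)=\binom{m}{2}$ extra monomials must be placed inside $\{2m<a+b\le \tfrac{5}{2}m\}$, and the worst placement (pushing them toward large $a$) gives $v_m\sim\frac{7}{3}m^3$, whence $\limsup m\ell_m/v_m=\frac{15}{14}$. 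Your Case~1 outline is fine; you just need to carry out this last step rather than defer the sharp computation to Case~3.
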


\begin{proof}
For a point $p$ on $S$, we may or may not have a member in $|-K_S|$ that has multiplicity~$3$ at $p$.

\medskip
\emph{Case 1.  No divisor in $|-K_S|$ has multiplicity $3$ at $p$.}
\medskip

In this case, the monomials  $$t_1=1, \ t_2=x, \ t_3=y, \ t_4=x^2, \ t_5=xy, \ t_6=y^2$$ form a basis for $\mathcal{T}_1$. Therefore, 
$$\mathbf{x}_{1,1}=1, \ \mathbf{x}_{1,2}=x, \ \mathbf{x}_{1,3}=y,\ \mathbf{x}_{1,4}=x^2,\ \mathbf{x}_{1,5}=xy,\ \mathbf{x}_{1,6}=y^2.$$  Then we obtain the  set 
\[\mathcal{C}_m=
\left\{x^{n_1}y^{n_2}~|~0\leq n_1, n_2 \mbox{ and } 0\leq n_1+n_2\leq 2m \right\}
.\]
Let $G$ be the effective divisor of a section in $\mathcal{L}_m$ and let $H$ be a general curve  of degree $3$ on $\mathbb{P}^2$ that has multiplicity $2$ at $q$ and passes through $\pi(M_1)$, $\pi(M_2)$, $\pi(M_3)$ and $\pi(M_4)$. Then the inequality 
\[9m\geq \mult_q(H)\mult_q(G)+\sum_{i=1}^4\mult_{\pi(M_i)}(H)\mult_{\pi(M_i)}(G)\geq 4m+2\mult_q(G)\]
shows that the maximum value $v_m$ can be attained when the image of  the injective map $\iota_m$ in \eqref{index-map2}
is contained in the set of the monomials corresponding to the integral  points in the shade area of the following diagram:
 \medskip

\begin{center}
\begin{tikzpicture}[scale=0.55, line cap=round,line join=round,>=triangle 45,x=0.2cm,y=0.2cm]
\draw [color=cqcqcq,, xstep=1.0cm,ystep=1.0cm] (-5.,-35.) grid (38.,8.);
\clip(-5.,-35.) rectangle (38.,8.);
\fill[color=sqsqsq,fill=sqsqsq,fill opacity=0.1] (0.,-10.) -- (12.475239083380389,-22.67985609160773) -- (12.559480994579683,-17.625341419650184) -- (25.,-30.) -- (0.,-30.) -- cycle;
\draw [->] (0.,-30.) -- (0.,5.);
\draw [->] (0.,-30.) -- (35.,-30.);
\draw (0.,0.)-- (30.,-30.);
\draw (-5,-18) node[anchor=north west] {$m$};
\draw (17.5,-30) node[anchor=north west] {$2m$};
\draw (-6.3,-8) node[anchor=north west] {$2m$};
\draw (27,-30) node[anchor=north west] {$3m$};
\draw (-6.3,2) node[anchor=north west] {$3m$};
\draw (35,-28) node[anchor=north west] {$x$};
\draw (-2,9) node[anchor=north west] {$y$};
\draw [color=sqsqsq] (0.,-10.)-- (12.475239083380389,-22.67985609160773);
\draw [color=sqsqsq] (12.475239083380389,-22.67985609160773)-- (12.559480994579683,-17.625341419650184);
\draw [color=sqsqsq] (12.559480994579683,-17.625341419650184)-- (25.,-30.);
\draw [color=sqsqsq] (25.,-30.)-- (0.,-30.);
\draw [color=sqsqsq] (0.,-30.)-- (0.,-10.);
\draw [dash pattern=on 1pt off 1pt] (12.475239083380387,-22.67985609160773)-- (12.559480994579683,-30.);
\draw (9,-29.2) node[anchor=north west] {$\frac{5}{4}m$};
\end{tikzpicture}
\end{center}
 \medskip
To be precise, the monomials of the set $\mathcal{C}_m$ correspond to the integral points below and on the line  joining $(2m,0)$ and $(0,2m)$. The images $\iota_m(u_1),\ldots, \iota_m(u_a)$ sit in the shade area above the line  joining $(2m,0)$ and $(0,2m)$.

Then, Figure~\ref{table} shows that
$$\limsup_m \frac{m\ell_m}{v_m}=\frac{15}{14}.$$

\medskip
\emph{Case 2.  There is a divisor $C$   in $|-K_S|$ that has multiplicity~$3$ at $p$.}
\medskip

The curve $\pi(C)$ consists of three lines $L_1$, $L_2$, $L_3$ passing through the point $q$. We may assume that the points $\pi(M_1)$ and $\pi(M_2)$ belong to $L_1$. Then the points $\pi(M_3)$ and $\pi(M_4)$ belong to the union of $L_2$ and $L_3$.
Furthermore, we may assume that $L_1$ is defined by $x=0$, $L_2$ by $y=0$ and $L_3$ by $x+y=0$.  Then   
$$t_1=1, \ t_2=x, \ t_3=y, \ t_4=x^2, \ t_5=xy, \ t_6=xy(x+y)$$ form a basis for $\mathcal{T}_1$. We may take
$$\mathbf{x}_{1,1}=1, \ \mathbf{x}_{1,2}=x, \ \mathbf{x}_{1,3}=y,\ \mathbf{x}_{1,4}=x^2,\ \mathbf{x}_{1,5}=xy,\ \mathbf{x}_{1,6}=xy^2.$$
The set $\mathcal{C}_m$
consists of the monomials corresponding the integral  points in the shade area of the following diagram:
 \medskip
 
\begin{center}
\begin{tikzpicture}[scale=0.55, line cap=round,line join=round,>=triangle 45,x=0.2cm,y=0.2cm]
\draw [color=cqcqcq,, xstep=1.0cm,ystep=1.0cm] (-5.,-35.) grid (38.,8.);
\clip(-5.,-35.) rectangle (38.,8.);
\fill[color=sqsqsq,fill=sqsqsq,fill opacity=0.1] (20.,-30.) -- (10.,-10.) -- (0.,-20.) -- (0.,-30.) -- cycle;
\draw [->] (0.,-30.) -- (0.,5.);
\draw [->] (0.,-30.) -- (35.,-30.);
\draw (0.,0.)-- (30.,-30.);
\draw (8,-30) node[anchor=north west] {$m$};
\draw (-5,-18) node[anchor=north west] {$m$};
\draw (17.5,-30) node[anchor=north west] {$2m$};
\draw (-6.3,-8) node[anchor=north west] {$2m$};
\draw (27,-30) node[anchor=north west] {$3m$};
\draw (-6.3,2) node[anchor=north west] {$3m$};
\draw (35,-28) node[anchor=north west] {$x$};
\draw (-2,9) node[anchor=north west] {$y$};
\draw [color=sqsqsq] (20.,-30.)-- (10.,-10.);
\draw [color=sqsqsq] (10.,-10.)-- (0.,-20.);
\draw [color=sqsqsq] (0.,-20.)-- (0.,-30.);
\draw [color=sqsqsq] (0.,-30.)-- (20.,-30.);
\end{tikzpicture}
\end{center}
 \medskip
 
Note that the number of monomials in this set is exactly $\ell_m$. Therefore, $a=0$ in Step~2 and Figure~\ref{table} shows that
$$\limsup_m \frac{m\ell_m}{v_m}=\frac{15}{13}.$$

Cases~1 and 2 complete the proof.
\end{proof}

\section{K-semistable del Pezzo surfaces II}

In this section, together with Section~\ref{section:semi-I}, we complete the proof of the second statement of Main Theorem.

 Let $S$ be the del Pezzo surface of degree $6$. This surface can be obtained by blowing up $\mathbb{P}^2$  at $p_1=[0:0:1]$, $p_2=[0:1:0]$ and $p_3=[1:0:0]$. Let $\phi_3:S\to \mathbb{P}^2$ be the blow-up with the exceptional curves  $E$, $F$, $G$. For a fixed positive integer $m$, the space $\mathrm{H}^0(S,\mathcal{O}_{S}(-mK_{S}))$ can be regarded as the subspace $\mathcal{L}_m$ of $\mathrm{H}^0(\mathbb{P}^2,\mathcal{O}_{\mathbb{P}^2}(3m))$ consisting of the sections vanishing at $p_1, p_2, p_3$ with order at least $m$. 
The set
\[\mathcal{B}_3=\left\{ x^ay^bz^c\ | \ a, b, c \mbox{ are  integers with } a+b+c=3m \mbox{ and } 0\leq a, b, c\leq 2m\right\}\]
forms a basis for $\mathcal{L}_m$. Let $D_{a,b,c}$ be the divisor defined by $x^ay^bz^c$ on $\mathbb{P}^2$. 
Then the divisor
\[D=\frac{1}{m(3m^2+3m+1)}\sum_{x^ay^bz^c\in\mathcal{B}_3} (\phi_3^*(D_{a,b,c})-mE-mF-mG)\]
is an anticanonical $\mathbb{Q}$-divisor of $m$-basis type on $S$. We immediately see that
\[D=L_x+L_y+L_y+E+F+G,\]
where $L_x$, $L_y$, $L_z$ are the proper transforms of the lines  on $\mathbb{P}^2$ defined by $x=0$, $y=0$, $z=0$, respectively. Therefore,
$\delta(S)\leq 1$.

\begin{theorem}\label{theorem:degree 6}
The $\delta$-invariant of the del Pezzo surface of degree $6$ is $1$.
\end{theorem}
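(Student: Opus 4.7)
Since the upper bound $\delta(S) \leq 1$ has already been established via the toric $m$-basis divisor $L_x + L_y + L_z + E + F + G$, the plan is to prove the matching lower bound $\delta(S) \geq 1$. As in Section~\ref{section:K-stable del Pezzo surfaces}, it suffices to show that for every point $p \in S$ and every effective $\mathbb{Q}$-divisor $D^m/(m\ell_m)$ of $m$-basis type, $c_p(S, D^m/(m\ell_m)) \geq 1 + o(1)$ as $m \to \infty$, so that the local Newton polygon argument of Theorem~\ref{theorem:d=9} yields $\delta(S) \geq 1$.

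The case split is dictated by the hexagon of $(-1)$-curves on $S$, which decomposes into the two triangles $\{E, F, G\}$ and $\{L_x, L_y, L_z\}$ of mutually disjoint $(-1)$-curves. If $p$ is not a vertex of the hexagon, then $p$ lies on at most one $(-1)$-curve, and one of the two triangles provides three disjoint $(-1)$-curves $M_1, M_2, M_3$ missing $p$. Contracting them via $\pi : S \to \mathbb{P}^2$ yields a local isomorphism near $p$, and I would then run Steps~0--4 of Section~\ref{section:K-stable del Pezzo surfaces}, subdividing according to whether $p$ lies on a special anticanonical curve of low degree and using the standard intersection-theoretic bounds on the multiplicities of components of $m$-basis divisors. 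The bookkeeping via Figure~\ref{table} produces the asymptotic equality $v_m \sim 3m^3$, matching $m\ell_m = 3m^3 + 3m^2 + m$, in each subcase.

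The main obstacle is the remaining case, where $p$ is one of the six torus-fixed points: here no triple of disjoint $(-1)$-curves of $S$ misses $p$, so the Section~\ref{section:K-stable del Pezzo surfaces} template fails outright. I would bypass it using the toric description. Let $C_1, C_2$ be the two $(-1)$-curves through $p$, take local coordinates $x, y$ with $C_1 = \{x = 0\}$ and $C_2 = \{y = 0\}$, and let $\Delta_m = mP - m v_p$ be the lattice hexagon obtained by translating the vertex $m v_p$ of the moment polytope $mP$ of $-mK_S$ to the origin. The torus-invariant sections form a basis of $\mathrm{H}^0(S, \mathcal{O}_S(-mK_S))$ and, in these coordinates, are exactly the monomials $\{x^a y^b : (a, b) \in \Delta_m \cap \mathbb{Z}^2\}$; the Ehrhart count gives $|\Delta_m \cap \mathbb{Z}^2| = \ell_m$, and the symmetries $(a, b) \leftrightarrow (b, a)$ and $(a, b) \leftrightarrow (2m-a, 2m-b)$ of the hexagon force $\sum_{(a,b) \in \Delta_m \cap \mathbb{Z}^2} a = \sum_{(a,b) \in \Delta_m \cap \mathbb{Z}^2} b = m \ell_m$.

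With this toric basis in place, the injection of Lemma~\ref{lemma:injection2} comes cheaply. For any basis $\{s_i\}$ of $\mathrm{H}^0(S, -mK_S)$ with local equations $f_i$ at $p$, the change-of-basis matrix against the toric monomial basis is invertible, so expanding its determinant provides a permutation $\sigma$ such that $x^{a_{\sigma(i)}} y^{b_{\sigma(i)}}$ appears with nonzero coefficient in $f_i$ for every $i$. Hence $\prod_i f_i$ has Newton polygon containing $\prod_i x^{a_{\sigma(i)}} y^{b_{\sigma(i)}} = x^{m\ell_m} y^{m\ell_m}$. Steps~A--C of the proof of Theorem~\ref{theorem:d=9} (applied with the analogue of Lemma~\ref{lemma:injection-after-coordinate-change} to accommodate the coordinate changes $x + Ay^\beta \mapsto x$) then give $c_0(\prod_i f_i) \geq 1/(m\ell_m)$, i.e. $c_p(S, D^m/(m\ell_m)) \geq 1$. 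Combined with the previous case this yields $\delta(S) \geq 1$, and therefore $\delta(S) = 1$.
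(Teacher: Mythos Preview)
Your proposal is correct. For the generic case ($p$ not a torus-fixed point) you follow the paper's approach: contract three disjoint $(-1)$-curves to $\mathbb{P}^2$ and run the machinery of Section~\ref{section:K-stable del Pezzo surfaces}, splitting according to whether $p$ lies on a $(-1)$-curve. The paper's two subcases give $\limsup m\ell_m/v_m$ equal to $9/8$ and $1$, respectively; your sketch is compatible with this.

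For the torus-fixed points your route genuinely differs from the paper's. The paper does \emph{not} work intrinsically on $S$: instead it contracts two suitable disjoint $(-1)$-curves via $\phi\colon S\to\mathbb{P}^1\times\mathbb{P}^1$, an isomorphism near $p$, and bounds $\mult_{\phi(p)}(\Omega)$ by intersecting an arbitrary $G\in|-mK_S|$ with the two $0$-curves through $\phi(p)$. This constraint carves out exactly the same hexagon $\{(a,b): 0\le a,b\le 2m,\ |a-b|\le m\}$ that you read off from the moment polytope, and hence the same equality $v_m=m\ell_m$. Your toric argument is more direct---the monomial basis and the centroid symmetries $(a,b)\leftrightarrow(b,a)$ and $(a,b)\leftrightarrow(2m-a,2m-b)$ come for free---while the paper's argument stays within the uniform ``contract to a simpler surface and bound multiplicities by intersection theory'' template used throughout Section~\ref{section:K-stable del Pezzo surfaces}. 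Both approaches feed into Steps~A--C identically, and your invocation of the analogue of Lemma~\ref{lemma:injection-after-coordinate-change} is legitimate: since the local sections are supported precisely on the hexagon monomials, all $\ell_m$ of them are forced to be pivot columns in any graded monomial order, and those pivots survive $x+Ay^\beta\mapsto x$ with $\beta>1$ because Zariski tangent terms do.
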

\begin{proof}
Fix a positive integer $m$ and set $\ell_m=h^0(S, \mathcal{O}_S(-mK_S))$.
Let $\{s_1,\ldots, s_{\ell_m}\}$ be a basis of $\mathrm{H}^0(S, \mathcal{O}_S(-mK_S))$. We denote the effective divisor of the section $s_i$ by $D_i$. Put $D=\sum D_i$.
Let~$p$ be an arbitrary point on $S$.

\medskip
\emph{Case 1. The point $p$ is not an intersection point of two $(-1)$-curves.}
\medskip

In this case,  there is a birational morphism $\pi:S\to \mathbb{P}^2$ that is an isomorphism around the point $p$. We can use the exactly same method as in the previous section. In order to apply the same method, we set $q=\pi(p)$ and use the same notations as before. By a suitable coordinate change, we assume that~$q=[0:0:1]$.

\medskip
\emph{Subcase 1. The point $p$ does not lie on any $(-1$)-curve.}
\medskip

Note that $\dim_k\mathcal{T}_1=7.$
Since the three lines determined by the points $q$ and $\pi(M_i)$, $i=1,2,3$, are distinct,  by a suitable coordinate change we may  assume that $$t_1=1, \  t_2=x, \ t_3=y,\ t_4=x^2,\ t_5=xy,\ t_6=y^2,\ t_7=xy(x+y)$$
form a basis for $\mathcal{T}_1$. We may therefore take
$$\mathbf{x}_{1,1}=1, \ \mathbf{x}_{1,2}=x, \ \mathbf{x}_{1,3}=y,\ \mathbf{x}_{1,4}=x^2,\ \mathbf{x}_{1,5}=xy,\ \mathbf{x}_{1,6}=y^2,\ \mathbf{x}_{1,7}=x^2y.$$ 
 Then the set $\mathcal{C}_m$
consists of the monomials corresponding to the integral  points in the shade area of the following diagram:
 \medskip
\begin{center}
\begin{tikzpicture}[scale=0.55, line cap=round,line join=round,>=triangle 45,x=0.2cm,y=0.2cm]
\draw [color=cqcqcq,, xstep=1.0cm,ystep=1.0cm] (-5.,-35.) grid (38.,8.);
\clip(-5.,-35.) rectangle (38.,8.);
\fill[color=sqsqsq,fill=sqsqsq,fill opacity=0.1] (20.,-30.) -- (20.,-20.) -- (0.,-10.) -- (0.,-30.) -- cycle;
\draw [->] (0.,-30.) -- (0.,5.);
\draw [->] (0.,-30.) -- (35.,-30.);
\draw (8,-30) node[anchor=north west] {$m$};
\draw (-5,-18) node[anchor=north west] {$m$};
\draw (17.5,-30) node[anchor=north west] {$2m$};
\draw (-6.3,-8) node[anchor=north west] {$2m$};
\draw (27,-30) node[anchor=north west] {$3m$};
\draw (-6.3,2) node[anchor=north west] {$3m$};
\draw (35,-28) node[anchor=north west] {$x$};
\draw (-2,9) node[anchor=north west] {$y$};
\draw [color=sqsqsq] (20.,-30.)-- (20.,-20.);
\draw [color=sqsqsq] (20.,-20.)-- (0.,-10.);
\draw [color=sqsqsq] (0.,-10.)-- (0.,-30.);
\draw [color=sqsqsq] (0.,-30.)-- (20.,-30.);
\end{tikzpicture}
\end{center}
 \medskip
Note that the number of monomials in this set is exactly $\ell_m$. Therefore, $a=0$ in Step~2 and  Figure~\ref{table} shows that
$$\limsup_m \frac{m\ell_m}{v_m}=\frac{9}{8}.$$

\medskip
\emph{Subcase 2. The point $p$  lies on a single $(-1$)-curve.}
\medskip

In this case, by a suitable coordinate change we may  assume that $$t_1=1, \  t_2=x, \ t_3=y,\ t_4=x^2,\ t_5=xy,\ t_6=x^2y,\ t_7=xy^2$$
form a basis for $\mathcal{T}_1$.  Then $$\mathbf{x}_{1,1}=1, \ \mathbf{x}_{1,2}=x, \ \mathbf{x}_{1,3}=y,\ \mathbf{x}_{1,4}=x^2,\ \mathbf{x}_{1,5}=xy,\ \mathbf{x}_{1,6}=x^2y,\ \mathbf{x}_{1,7}=xy^2.$$  The set $\mathcal{C}_m $
consists of the monomials corresponding to the integral  points in the shade area of the following diagram:

\medskip
\begin{center}
\begin{tikzpicture}[scale=0.55, line cap=round,line join=round,>=triangle 45,x=0.2cm,y=0.2cm]
\draw [color=cqcqcq,, xstep=1.0cm,ystep=1.0cm] (-5.,-35.) grid (38.,8.);
\clip(-5.,-35.) rectangle (38.,8.);
\fill[color=sqsqsq,fill=sqsqsq,fill opacity=0.1] (0.,-20.) -- (10.,-10.) -- (20.,-20.) -- (20.,-30.) -- (0.,-30.) -- cycle;
\draw [->] (0.,-30.) -- (0.,5.);
\draw [->] (0.,-30.) -- (35.,-30.);
\draw (8,-30) node[anchor=north west] {$m$};
\draw (-5,-18) node[anchor=north west] {$m$};
\draw (17.5,-30) node[anchor=north west] {$2m$};
\draw (-6.3,-8) node[anchor=north west] {$2m$};
\draw (27,-30) node[anchor=north west] {$3m$};
\draw (-6.3,2) node[anchor=north west] {$3m$};
\draw (35,-28) node[anchor=north west] {$x$};
\draw (-2,9) node[anchor=north west] {$y$};\draw [color=sqsqsq] (0.,-20.)-- (10.,-10.);
\draw [color=sqsqsq] (10.,-10.)-- (20.,-20.);
\draw [color=sqsqsq] (20.,-20.)-- (20.,-30.);
\draw [color=sqsqsq] (20.,-30.)-- (0.,-30.);
\draw [color=sqsqsq] (0.,-30.)-- (0.,-20.);
\end{tikzpicture}
\end{center}
\medskip
The number of monomials in this set is exactly $\ell_m$. Therefore, $a=0$ in Step~2 and  Figure~\ref{table} shows that
$$\limsup_m \frac{m\ell_m}{v_m}= 1.$$

\medskip
\emph{Case 2. The point $p$ is  an intersection point of two $(-1)$-curves.}
\medskip

There is a birational morphism $\phi:S\to\mathbb{P}^1\times\mathbb{P}^1$  that is an isomorphism around the point $p$. The morphism $\phi$ is obtained by contracting two suitable disjoint $(-1)$-curves $M_1, M_2$ on $S$.

For an effective divisor $C\in |-mK_{S}|$, the divisor $\phi(C)$ is a curve of bidegree $(2m, 2m)$ on $\mathbb{P}^1\times \mathbb{P}^1$ which passes through the points   $\phi(M_1)$ and $\phi(M_2)$ with multiplicities at least  $m$.  Such divisors yield  an $\ell_m$-dimensional subspace of $\mathrm{H}^0(\mathbb{P}^1\times\mathbb{P}^1,\mathcal{O}_{\mathbb{P}^1\times\mathbb{P}^1}(-mK_{\mathbb{P}^1\times\mathbb{P}^1}))$. The effective divisors $\phi(D_1),\ldots, \phi(D_{\ell_m})$ induce a basis for this subspace.

 Let $G$ be an effective divisor on $\mathbb{P}^1\times \mathbb{P}^1$ that comes from $ |-mK_{S}|$.
 Let $L_1$ and $L_2$ be the two $0$-curves that pass through the point $\phi(p)$.
We write $G=m_1L_1+m_2L_2+\Omega$, where $m_1$ and $m_2$ are non-negative integers and $\Omega$ is an effective divisor whose support contains neither $L_1$ nor $L_2$.  We may assume that $\phi(M_1)$ belongs to $L_1$ and $\phi(M_2)$ belongs to $L_2$.

We use a bihomogeneous  coordinate system $([x:u], [y:v])$ for $\mathbb{P}^1\times\mathbb{P}^1$.  For the proof, we may assume that $\phi(p)=([0:1], [0:1])$.
Putting $u=1$ and $v=1$, we may regard $x$ and $y$ as local coordinates around the point $\phi(p)$. Even though we maps the surface $S$ onto  $\mathbb{P}^1\times \mathbb{P}^1$ instead of $\mathbb{P}^2$, the original method to estimate $c_p(S, \frac{1}{m\ell_m}D)$ works  verbatim for this case. 

The inequalities 
\[2m=L_1\cdot(m_1L_1+m_2L_2+\Omega)\geq m_2+(m-m_1)+\mult_{\phi(p)}(\Omega),\]
\[2m=L_2\cdot(m_1L_1+m_2L_2+\Omega)\geq m_1+(m-m_2)+\mult_{\phi(p)}(\Omega)\]
imply $|m_1-m_2|\leq m$ and  
\[m-|m_1-m_2| \geq \mult_\phi(p)(\Omega).\]

Note that $0\leq m_1, m_2\leq 2m$. This implies that the image of the injective map $\iota_m$ in \eqref{index-map2} can be arranged to be contained in the set  of the monomials corresponding to the integral  points in the shade area of the following diagram:
\medskip
\begin{center}
\begin{tikzpicture}[scale=0.55, line cap=round,line join=round,>=triangle 45,x=0.2cm,y=0.2cm]
\draw [color=cqcqcq,, xstep=1.0cm,ystep=1.0cm] (-5.,-35.) grid (38.,8.);
\clip(-5.,-35.) rectangle (38.,8.);
\fill[color=sqsqsq,fill=sqsqsq,fill opacity=0.1] (0.,-30.) -- (0.,-20.) -- (10.,-10.) -- (20.,-10.) -- (20.,-20.) -- (10.,-30.) -- cycle;
\draw [->] (0.,-30.) -- (0.,5.);
\draw [->] (0.,-30.) -- (35.,-30.);
\draw (8,-30) node[anchor=north west] {$m$};
\draw (-5,-18) node[anchor=north west] {$m$};
\draw (17.5,-30) node[anchor=north west] {$2m$};
\draw (-6.3,-8) node[anchor=north west] {$2m$};
\draw (27,-30) node[anchor=north west] {$3m$};
\draw (-6.3,2) node[anchor=north west] {$3m$};
\draw (35,-28) node[anchor=north west] {$x$};
\draw (-2,9) node[anchor=north west] {$y$};
\draw [color=sqsqsq] (0.,-30.)-- (0.,-20.);
\draw [color=sqsqsq] (0.,-20.)-- (10.,-10.);
\draw [color=sqsqsq] (10.,-10.)-- (20.,-10.);
\draw [color=sqsqsq] (20.,-10.)-- (20.,-20.);
\draw [color=sqsqsq] (20.,-20.)-- (10.,-30.);
\draw [color=sqsqsq] (10.,-30.)-- (0.,-30.);
\end{tikzpicture}
\end{center}
 \medskip
Note that the number of monomials in this set is exactly $\ell_m$. Figure~\ref{table} shows that
$$\limsup_m \frac{m\ell_m}{v_m}=1.$$

From Cases~1 and~2, we can draw the conclusion that  $\delta (S)=1$.

\end{proof}

Now we consider the space $\mathbb{P}^1\times\mathbb{P}^1$. We use a bihomogeneous coordinate system $([x:u], [y:v])$ for $\mathbb{P}^1\times\mathbb{P}^1$. As before, fix a positive integer $m$ and set $\ell_m =h^0(\mathbb{P}^1\times\mathbb{P}^1, \mathcal{O}_{\mathbb{P}^1\times\mathbb{P}^1}(-mK_{\mathbb{P}^1\times\mathbb{P}^1}))$. The $\ell_m$ monomials of bidegree $(2m,2m)$ in variables $x, u;y,v$ form a basis for $\mathrm{H}^0(\mathbb{P}^1\times\mathbb{P}^1, \mathcal{O}_{\mathbb{P}^1\times\mathbb{P}^1}(-mK_{\mathbb{P}^1\times\mathbb{P}^1}))$. Therefore, the devisor defined by $xuyv=0$ is an anticanonical divisor of $m$-basis type.
This shows
\[\delta(\mathbb{P}^1\times\mathbb{P}^1)\leq 1.\]

\begin{theorem}
The $\delta$-invariant of $\mathbb{P}^1\times\mathbb{P}^1$ is $1$.
\end{theorem}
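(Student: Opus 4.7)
The plan is to establish $\delta(\mathbb{P}^1\times\mathbb{P}^1) \geq 1$ by adapting the argument of Theorem~\ref{theorem:d=9} almost verbatim; combined with the upper bound $\delta \leq 1$ already verified immediately before the statement, this yields equality. Fix an arbitrary point $p$ on $\mathbb{P}^1\times\mathbb{P}^1$. Using the transitive action of $\mathrm{PGL}_2 \times \mathrm{PGL}_2$, we may assume $p = ([0:1], [0:1])$, and work in local affine coordinates $x, y$ on the chart $u = v = 1$. Given any basis $s_1,\ldots, s_{\ell_m}$ of $\mathrm{H}^0(\mathbb{P}^1\times\mathbb{P}^1, \mathcal{O}(-mK))$, denote by $f_i$ the dehomogenization of $D(s_i)$, and set $f = \prod_{i=1}^{\ell_m} f_i$. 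It suffices to show $c_0(f) \geq \frac{1}{m\ell_m}$.

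The key observation is that on this affine chart the $\ell_m = (2m+1)^2$ monomials
\[
\mathbf{x}_{a,b} := x^ay^b, \qquad 0\leq a,b \leq 2m,
\]
form a basis of the restricted linear system. After Gauss-eliminating the matrix expressing $s_1,\ldots, s_{\ell_m}$ in this monomial basis, we may rename so that $f_i$ contains $\mathbf{x}_i$ for each $i$ (exactly as in the $\mathbb{P}^2$ proof). The analogue of Claim 1 now follows by a direct computation: for any weights $w(x), w(y)$,
\[
w(f)=\sum_{i=1}^{\ell_m} w(f_i) \leq \sum_{i=1}^{\ell_m} w(\mathbf{x}_i) = w(x)\sum_{0\leq a,b\leq 2m} a \;+\; w(y)\sum_{0\leq a,b\leq 2m} b,
\]
and both monomial sums equal $(2m+1)\cdot\tfrac{2m(2m+1)}{2} = m(2m+1)^2 = m\ell_m$. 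Hence the Newton polygon of $f$ contains the point $(m\ell_m, m\ell_m)$.

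With this input in hand, Steps A, B, C in the proof of Theorem~\ref{theorem:d=9} apply verbatim: at each stage one locates the edge $\Lambda^{(k)}$ of the Newton polygon of $f^{(k)}$ that crosses the diagonal $s=t$, either deduces $c_0(f^{(k)}) \geq \frac{1}{m\ell_m}$ from Proposition~\ref{c_0} (when the edge is axis-parallel, when it hits a vertex, or when the largest multiplicity $c$ in the factorization of the weighted leading term satisfies $c \leq m\ell_m$), or performs a coordinate change $x+A_1y^\beta \mapsto x$ that strictly increases the slope $-w(x)/w(y)$. The presence of the monomial $x^{m\ell_m}y^{m\ell_m}$ in the Newton polygon of each $f^{(k)}$ bounds the slope below by $-m\ell_m$, and property $(*)$ together with $\beta^{(k)}\geq 2$ forces the slope to drop by at least $1$ per loop, guaranteeing termination. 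Nothing in these arguments used any feature of $\mathbb{P}^2$ beyond Claim 1 and the existence of the monomial-compatible basis, both of which we have verified for $\mathbb{P}^1\times\mathbb{P}^1$.

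I expect no serious obstacle: the proof is genuinely the same argument in a different container, because the standard monomial basis $\{x^ay^b : 0\leq a,b\leq 2m\}$ plays the role that $\{x^ay^b : a+b\leq 3m\}$ played for $\mathbb{P}^2$, and has the same diagonal symmetry $\prod \mathbf{x}_{a,b} = x^{m\ell_m}y^{m\ell_m}$ that makes Claim 1 hold without any refinement. The only point requiring a bit of care is the bookkeeping for the successive coordinate changes in Step C; this is handled exactly by the analogue of Lemma~\ref{lemma:injection-after-coordinate-change} (with the monomial basis above, which is preserved by the relevant linear transformations). The resulting estimate $c_p(\mathbb{P}^1\times\mathbb{P}^1, \tfrac{1}{m\ell_m}D) \geq 1$ holds for every $p$ and every $m$-basis type divisor $D$, hence $\delta_m \geq 1$ for all $m$, and therefore $\delta(\mathbb{P}^1\times\mathbb{P}^1) \geq 1$, completing the proof.
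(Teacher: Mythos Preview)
Your proof is correct and follows essentially the same route as the paper: both establish that the Newton polygon of $\prod f_i$ contains the diagonal point $(m\ell_m, m\ell_m)$ and then invoke Steps~A, B, C of Theorem~\ref{theorem:d=9} verbatim, with termination and the coordinate-change bookkeeping handled exactly as there. The only cosmetic difference is that the paper routes the argument through the Section~\ref{section:K-stable del Pezzo surfaces} framework and uses intersection with the two rulings $L_1,L_2$ to confine the image of $\iota_m$ to the square $\{x^ay^b:0\le a,b\le 2m\}$, whereas you observe directly that this square is already the full monomial basis of $\mathrm{H}^0(\mathbb{P}^1\times\mathbb{P}^1,\mathcal{O}(-mK))$, making the intersection step unnecessary.
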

\begin{proof}
Let $\{s_1,\ldots,s_{\ell_m}\}$ be a basis of the space $\mathrm{H}^0(\mathbb{P}^1\times\mathbb{P}^1, \mathcal{O}_{\mathbb{P}^1\times\mathbb{P}^1}(-mK_{\mathbb{P}^1\times\mathbb{P}^1}))$. Denote the effective divisor  defined by the section $s_i$ by $D_i$. Set
$D=\sum D_i$. 
For a given point $p\in \mathbb{P}^1\times\mathbb{P}^1$, as before, the inequality $$c_p(\mathbb{P}^1\times\mathbb{P}^1, D)\geq \frac{1}{m\ell_m}$$
will be verified.

We may assume that $p=([0:1], [0:1])$.
Putting $u=1$ and $v=1$, we may regard $x$ and~$y$ as local coordinates around the point $p$.   As Case~2 in Theorem~\ref{theorem:degree 6},  the original method to estimate $c_p(\mathbb{P}^1\times\mathbb{P}^1, \frac{1}{m\ell_m}D)$ works  verbatim.

 Let $G$ be an effective divisor in  $ |-mK_{\mathbb{P}^1\times\mathbb{P}^1}|$.
 Let $L_1$ and $L_2$ be the two $0$-curves that pass through the point $p$.
We write $G=m_1L_1+m_2L_2+\Omega$, where $m_1$ and $m_2$ are non-negative integers and $\Omega$ is an effective divisor whose support contains neither $L_1$ nor $L_2$.

The inequalities 
\[2m=L_1\cdot(m_1L_1+m_2L_2+\Omega)\geq m_2+\mult_{p}(\Omega),\]
\[2m=L_2\cdot(m_1L_1+m_2L_2+\Omega)\geq m_1+\mult_{p}(\Omega)\]
imply  $0\leq m_1, m_2\leq 2m$ and  
\[2m-\max\{m_1, m_2 \}\geq \mult_p(\Omega).\]

This implies that the image of the injective map $\iota_m$ in \eqref{index-map2}  is  contained in the set  of the monomials corresponding to the integral  points in the shade area of the following diagram:
\medskip
\begin{center}
\begin{tikzpicture}[scale=0.55, line cap=round,line join=round,>=triangle 45,x=0.2cm,y=0.2cm]
\draw [color=cqcqcq,, xstep=1.0cm,ystep=1.0cm] (-5.,-35.) grid (38.,8.);
\clip(-5.,-35.) rectangle (38.,8.);
\fill[color=sqsqsq,fill=sqsqsq,fill opacity=0.1] (0.,-30.) -- (0.,-10.) -- (10.,-10.) -- (20.,-10.) -- (20.,-20.) -- (20.,-30.) -- cycle;
\draw [->] (0.,-30.) -- (0.,5.);
\draw [->] (0.,-30.) -- (35.,-30.);
\draw (8,-30) node[anchor=north west] {$m$};
\draw (-5,-18) node[anchor=north west] {$m$};
\draw (17.5,-30) node[anchor=north west] {$2m$};
\draw (-6.3,-8) node[anchor=north west] {$2m$};
\draw (27,-30) node[anchor=north west] {$3m$};
\draw (-6.3,2) node[anchor=north west] {$3m$};
\draw (35,-28) node[anchor=north west] {$x$};
\draw (-2,9) node[anchor=north west] {$y$};
\draw [color=sqsqsq] (0.,-30.)-- (0.,-10.);
\draw [color=sqsqsq] (0.,-10.)-- (10.,-10.);
\draw [color=sqsqsq] (10.,-10.)-- (20.,-10.);
\draw [color=sqsqsq] (20.,-10.)-- (20.,-20.);
\draw [color=sqsqsq] (20.,-20.)-- (20.,-30.);
\draw [color=sqsqsq] (20.,-30.)-- (0.,-30.);
\end{tikzpicture}
\end{center}
 \medskip
Figure~\ref{table} then shows that
$$\limsup_m \frac{m\ell_m}{v_m}=1.$$
\end{proof}

\section{Non-K-semistable del Pezzo surfaces}

In this section, we verify the last statement of Main Theorem. Since Conjecture~\ref{conjecture} has not been verified completely, at this moment we cannot say that the last statement of Main Theorem implies that the Hirzebruch surface  $\mathbb{F}_1$
and the del Pezzo surface of degree $7$ are not K-semistable. However, it is able to serve as a good evidence for Conjecture~\ref{conjecture}.

Let $[x:y:z]$ be a homogeneous coordinate for $\mathbb{P}^2$. The Hirzebruch surface $\mathbb{F}_1$ can be obtained by blowing up 
$\mathbb{P}^2$ at $p_1=[0:0:1]$. Let $\phi_1:\mathbb{F}_1\to \mathbb{P}^2$ be the blow-up with the exceptional divisor $E$. For a fixed positive integer $m$, the space $\mathrm{H}^0(\mathbb{F}_1,\mathcal{O}_{\mathbb{F}_1}(-mK_{\mathbb{F}_1}))$ can be regarded as the subspace $\mathcal{M}_1$ of $\mathrm{H}^0(\mathbb{P}^2,\mathcal{O}_{\mathbb{P}^2}(3m))$ consisting of the sections vanishing at $p_1$ with order at least~$m$. The set
\[\mathcal{B}_1=\left\{ x^ay^bz^c\ | \ a, b, c \mbox{ are non-negative integers with } a+b+c=3m, c\leq 2m \right\}\]
forms a basis for $\mathcal{M}_1$. Let $D_{a,b,c}$ be the divisor defined by $x^ay^bz^c$ on $\mathbb{P}^2$. 
Then the divisor
\[D=\frac{1}{m(2m+1)^2}\sum_{x^ay^bz^c\in\mathcal{B}_1} (\phi_1^*(D_{a,b,c})-mE)\]
is an anticanonical $\mathbb{Q}$-divisor of $m$-basis type on $\mathbb{F}_1$. The multiplicity of $D$ along the curve $E$ is~$\frac{7m+4}{6m+3}$. Therefore,
\[\delta_m(\mathbb{F}_1)\leq \frac{6m+3}{7m+4},\]
and hence
\[\delta(\mathbb{F}_1)\leq \frac{6}{7}.\]

We now let $S$ be the del Pezzo surface of degree $7$. It can be obtained by blowing up $\mathbb{P}^2$  at $p_1=[0:0:1]$ and $p_2=[0:1:0]$. Let $\phi_2:S\to \mathbb{P}^2$ be the blow-up with the exceptional curves  $E$ and $F$. For a fixed positive integer $m$, the space $\mathrm{H}^0(S,\mathcal{O}_{S}(-mK_{S}))$ can be regarded as the subspace $\mathcal{M}_2$ of $\mathrm{H}^0(\mathbb{P}^2,\mathcal{O}_{\mathbb{P}^2}(3m))$ consisting of the sections vanishing at $p_1$ and $p_2$ with order at least $m$. 
The set
\[\mathcal{B}_2=\left\{ x^ay^bz^c\ | \ a, b, c \mbox{ are non-negative integers with } a+b+c=3m, b\leq 2m, c\leq 2m\right\}\]
forms a basis for $\mathcal{M}_2$. Let $C_{a,b,c}$ be the divisor defined by $x^ay^bz^c$ on $\mathbb{P}^2$. 
The anticanonical $\mathbb{Q}$-divisor 
\[C=\frac{2}{7m^2(m+1)+2m}\sum_{x^ay^bz^c\in\mathcal{B}_2} (\phi_2^*(C_{a,b,c})-mE-mF)\]
is  of $m$-basis type on $S$. The multiplicity of $C$ along the proper transform of the curve defined by $x=0$ is $\frac{25m^2+27m+8}{21m(m+1)+6}$. Therefore,
\[\delta_m(S)\leq \frac{21m(m+1)+6}{25m^2+27m+8},\]
and hence
\[\delta(S)\leq \frac{21}{25}.\]

\bigskip
\bigskip

\section*{Appendix}

In Appendix Lemmas~\ref{lemma:injection1},~\ref{lemma:injection2} and~\ref{lemma:injection-after-coordinate-change} are verified. All the notations are the same as those in the beginning of Section~\ref{section:K-stable del Pezzo surfaces}.

\bigskip

We consider the vector space \[V_{\lambda}:=\bigoplus_{n=0}^{\lambda}k[x,y]_n\]
with an ordered basis $\{x^\alpha y^\beta\ | \ \alpha+\beta\leq \lambda\}$, where $k[x,y]_n$ is the $(n+1)$-dimensional vector space of homogeneous polynomials of degree $n$ in variables $x, y$. The order of the basis is given in the following way: 
\begin{enumerate}
\item the  graded lexicographic order with  $x\prec  y$ except for Case~2 in Theorem~\ref{theorem:d1} and Case~2 in Theorem~\ref{theorem:d5};
\item the  graded lexicographic order with  $y\prec  x$   for Case~2 in Theorem~\ref{theorem:d5};
\item the order for Case~2 in Theorem~\ref{theorem:d1} satisfies the properties:
\begin{enumerate}
\item $x^{\alpha_1} y^{\beta_1}\prec x^{\alpha_2} y^{\beta_2}$ if $\alpha_1+\beta_1<\alpha_2+\beta_2$;
\item $x^{\alpha} y^{\alpha}\prec x^{\alpha_2} y^{\beta_2}$ if $\alpha_2\ne \beta_2$ and $\alpha_2+\beta_2=2\alpha$.
\end{enumerate}
\end{enumerate}
Note that these orders  make  the monomial  $\prod_{i=1}^{d+1} \mathbf{x}_{1,i}^{n_i}$ in $\mathcal{C}_m$ smaller than any other monomials that appear in $\prod_{i=1}^{d+1} t_i^{n_i}$.

Since $f_{m, i}$ is a member of the vector space $V_{3m}$, we may express the polynomial $f_{m, i}$ as a~$1\times \sigma$ matrix with respect to the given ordered basis, where $\sigma=\frac{(3m+1)(3m+2)}{2}$. 
By writing these $1\times \sigma$ matrices as rows, we can express the $\ell_m$ polynomials $f_{m,1},\cdots, f_{m,\ell_m}$  altogether as a single $\ell_m\times \sigma$ matrix $M_F$. Since $f_{m,1},\cdots, f_{m,\ell_m}$ are linearly independent, the rank of the matrix $M_F$ is exactly~$\ell_m$. 

Let $E$ be a row echelon form of the matrix $M_F$. Then there is  an~$\ell_m\times \ell_m$ invertible matrix~$T$ such that $M_F=TE$. Since the rank of $M_F$ is $\ell_m$, the matrix~$E$ does not have any zero row.  The $i$-th row of $E$ represents a polynomial $h_{m,i}$ that belongs to $\mathcal{L}_m$. Its Zariski tangent term $t_{m,i}$ is represented by  the pivot  (the first non-zero entry from the left  in a row) and the entries whose corresponding  monomials have the same degree as  the monomial  corresponding to the pivot. In particular, the Zariski tangent term contains the monomial corresponding to  the pivot. 
The polynomials $h_{m,i}$ form a basis for the space $\mathcal{L}_m$ and their Zariski tangent terms~$t_{m, i}$ form a basis for the space $\mathcal{T}_m$. 

Since the monomial  $\prod_{i=1}^{d+1} \mathbf{x}_{1,i}^{n_i}$ in $\mathcal{C}_m$ is smaller than any other monomials that appear in~$\prod_{i=1}^{d+1} t_i^{n_i}$,  the set of the monomials corresponding to the columns with the pivots of $E$ must contain the set $\mathcal{C}_m$.

By collecting the $\ell_m$ pivot columns of $E$ in order, we obtain  an $\ell_m\times \ell_m$  upper triangular matrix  with the pivots on the diagonal. Denote this minor matrix of $E$  by $\widetilde{E}$. We also denote  the~$\ell_m\times \ell_m$  matrix $T\widetilde{E}$ by $\widetilde{M}_F$.  The entries of $i$-th row of $\widetilde{M}_F$ are the coefficients of the monomials in $f_{m, i}$ corresponding to the pivot columns of $E$. Since the matrix $\widetilde{M}_F$ is nonsingular, 
we can choose a single non-zero entry from each column  of  $\widetilde{M}_F$ in such a way that the non-zero entries are selected exactly one time from each row. This proves Lemmas~\ref{lemma:injection1} and~\ref{lemma:injection2}.

For Lemma~~\ref{lemma:injection-after-coordinate-change},  we consider the vector space $V_{\lambda}$ with a sufficiently large positive integer $\lambda$ so that we could write polynomials of bigger degrees  as matrices.
Note that the change of coordinate $x+A_1y^\beta \mapsto x$  in Step~C is given with $\beta>1$.
Since $$f_{m, i}(x,y)=\sum_{j=1}^{\ell_m}T_{ij}h_{m,j}(x,y)$$
for each $i$, where $T_{ij}$ is the entry of $T$ in the $i$-th row and the $j$-th column, we have 
$$f_{m, i}^{(1)}(x,y)=f_{m, i}(x-A_1y^\beta,y)=\sum_{j=1}^{\ell_m}T_{ij}h_{m,j}(x-A_1y^\beta,y).$$
Since $\beta>1$, we immediately see that the change of  coordinate 
does not give any effect on the Zariski  tangent term of $h_{m,i}$ at all. It therefore leaves  the positions of the pivot columns of $E$ unchanged. 
Therefore, by the same argument as for Lemmas~\ref{lemma:injection1} and~\ref{lemma:injection2}, we can obtain an injection in Lemma~~\ref{lemma:injection-after-coordinate-change}. Furthermore, the same argument works inductively  for $\{f^{(k)}_{m,i}\}$, $k>1$. This completes the proof of Lemma~~\ref{lemma:injection-after-coordinate-change}.

\end{document}